\numberwithin{equation}{section}
\newcommand{\CL}{{\cal L}}
\newcommand{\CM}{{\cal M}}
\newcommand{\CN}{{\cal N}}
\newcommand{\CO}{{\cal O}}
\newcommand{\CQ}{{\cal Q}}
\newcommand{\CH}{{\cal H}}
\newcommand{\CA}{{\cal A}}
\newcommand{\CX}{{\cal X}}
\newcommand{\CT}{{\cal T}}
\newcommand{\CR}{{\cal R}}
\newcommand{\CU}{{\cal U}}
\newcommand{\R}{{\mathbb R}}
\newcommand{\N}{{\mathbb N}}
\newcommand{\E}{\mathbb{E}}
\newcommand{\prob}{\mathbb{P}}
\newcommand{\eps}{\varepsilon}
\newcommand\inner[2]{\langle #1, #2 \rangle}
\newcommand{\arcangle}{%
  \mathord{<\mspace{-9mu}\mathrel{)}\mspace{2mu}}%
}
\newtheorem{theorem}{Theorem}[section]
\newtheorem{lemma}[theorem]{Lemma}
\newtheorem{definition}[theorem]{Definition}
\newtheorem{remark}[theorem]{Remark}
\newtheorem{proposition}[theorem]{Proposition}
\newtheorem{assume}[theorem]{Assumption}
\newtheorem{conj}[theorem]{Conjecture}
\newtheorem{cor}[theorem]{Corollary}
\begin{document}

\title{Kink motion for the one-dimensional stochastic Allen--Cahn equation}
\author{Alexander Schindler \& Dirk Bl\"omker \\ Universit\"at Augsburg}
\date{\today}
\maketitle

\begin{abstract}
We study the kink motion for the one-dimensional
stochastic Allen-Cahn equation and its mass conserving counterpart.
Using a deterministic slow manifold,
in the sharp interface limit for sufficiently small noise strength we derive
an explicit stochastic differential equation for the motion of the interfaces,
which is valid as long as the solution stays close to the manifold.

On a relevant time-scale, where interfaces move at most by the minimal
allowed distance 
between interfaces, we show that the kinks behave approximately 
like the driving Wiener-process
projected onto the slow manifold, while in the mass-conserving case they are additionally coupled via the mass constraint.
\end{abstract}

\section{Introduction}

We study the stochastic Allen--Cahn equation (\ref{eq:AC}) together with its mass conserving modification (\ref{eq:mAC}) posed on
an one-dimensional domain driven by a small additive spatially smooth and white in time noise, which might depend on $\eps$, too.

As in the deterministic case (see \cite{CP89}) we use a deterministic slow manifold, 
which is parametrized by the position of the interfaces. 
The same manifold was also used for the Cahn--Hilliard equation in 
\cite{BaXun94, BaXun95} and its stochastic counterpart in \cite{ABK12, BYZ19}. 
Its key idea is to approximate an unknown true invariant manifold of the deterministic dynamic.
See also \cite{BJ14} for a more recent result on approximately invariant manifolds.
Here in Section \ref{AC:sec:slowmf} we introduce a different and simpler manifold, as due to the noise we observe a faster motion than the metastable motion of the deterministic case, where it is necessary to study in more detail also 
all exponentially small error terms.     

In Section \ref{sec:stab}, for the stochastic stability of the manifold
we show that with overwhelming probability a solution of the stochastic 
equation stays close to the manifold for extremely large times, unless 
the distance between two interfaces gets almost on the order of the atomistic interaction 
length $\varepsilon$. 
We are based on the method proposed in \cite{ABK12} but modify it in order to show stability in $L^2$ and $L^4$ spaces, 
which is necessary for the analysis of the dynamics along the manifold. Crucial  for stability is the 
spectral information of the linearized operator, which we recover from the single interface case on the whole real line in Section \ref{AC:sec:specAC}.

As long as the solutions of Allen--Cahn stay close to the manifold,
we show that the motion of the interfaces 
is given by a stochastic differential equation 
(see (\ref{eq:dh})). We do not show that the projection onto the manifold 
is globally well defined, but verify that we can always split 
the solution into a well defined diffusion process for the position on the manifold 
and the distance orthogonal to the manifold.  
However, we do not show that these coordinates are uniquely defined 
nor that the projection onto the manifold is well defined,
which might only be true closer to the manifold.  

We analyze the motion and show for the Allen--Cahn equation that on timescales where the interfaces
move on the order $\eps$, the     
motion is given by the driving Wiener process projected onto the slow manifold.
If the noise has no long-range correlation, then 
the motion of the interfaces are approximately independent.
For the mass conserving Allen--Cahn equation we verify a similar result,
but the interfaces are additionally coupled due to mass-conservation. 
The result for the Allen-Cahn equation was already studied in \cite{Shardlow00}
using formal analysis and numerical experiments.

We do not study what happens 
if two interfaces get on the order $\epsilon$ close to each other.
The obvious conjecture is that both annihilate, as shown in the deterministic case in \cite{CHEN04}, 
which is heavily based on the maximum principle.  
Conjectures and some details in the stochastic case can be found in \cite{Web14}.

In the higher dimensional case the situation is more complicated, 
as the motion of the interface should be driven by a free interface problem, 
which in general cannot be approximated by a finite dimensional manifold.
See \cite{ABK18, YZ19} for partial results or \cite{ABC94} in the deterministic case.

In special cases slow manifolds were 
used to study the motion of droplets (or bubbles) in various settings of the stochastic mass conserving Allen--Cahn equation \cite{ABBK15} (or \cite{ACF00} in the deterministic case) or the related Cahn--Hilliard equation
\cite{BS20} or \cite{AF98, AFK04} for the deterministic case.
On the other hand, for the Allen--Cahn equation the motion of droplets 
was not studied, as due to a lack of mass-conservation these droplets should collapse immediately.

Note that all the previous examples of droplet motion study the case of a single droplet, where the slow manifold is parametrized only by the position, 
while mass-conservation fixes the radius. The case of many bubbles 
does not seem to be studied rigorously yet. Although the slow manifold can be constructed and parametrized by position and radius of the droplets, 
it seems that the error terms by glueing single droplet solutions 
together are not exponentially small and thus pose an obstacle in obtaining rigorous results.        

Let us finally  remark, that the method of proof is related to the motion of traveling waves, where the manifold is given by translates of the wave profile.
The main difference here is that traveling waves do not move slowly on the manifold,
but travel at a constant speed.  

\section{Setting}

The stochastic Allen--Cahn equation on an one-dimensional domain driven by an additive spatially smooth and white in time noise $\partial_t W$ is given by
\begin{equation}
\label{eq:AC} \tag{AC}
\begin{cases}
\partial_t u \, = \, 
\eps^2 u_{xx} - f(u) + \partial_t W , \quad &0<x<1, \; t>0 \\
\;  u_x \, = \, 0, \quad  &x\in\{0,1\}.
\end{cases}
\end{equation}
Here, $0 < \eps \ll 1$ is a small parameter measuring the typical width of a phase transition, and~$f = F^\prime$ is the derivative
of a double well potential $F$. We assume that $F \in C^3(\R)$ is a smooth, even potential satisfying
\begin{center}
	\begin{varwidth}{10cm}
\begin{itemize}
\item[(S1)] $F(u) \geq 0$ and $F(u) = 0$ if, and only if, $u = \pm 1$,
\item[(S2)] $F^\prime$ has three zeros $\{ 0, \pm 1 \}$ and $F''(0) < 0, F''(\pm 1) > 0$,
\item[(S3)] $F$ is symmetric:  $F(u) = F(-u)\;\; \forall u \geq 0$.
\end{itemize}
    \end{varwidth}
\end{center}
The standard example is $F(u) = \tfrac14 (1-u^2)^2$ and thus $f(u) = u^3 - u$. For the simplicity of some arguments later, especially in determining the spectrum of the linearized operator, we focus
for the most part of this paper on this standard quartic potential, although the results remain valid for potentials satisfying the conditions
(S1)--(S3). For more details on this discussion, we refer to \cite{SchindlerPhD}.

For the moment, let us assume that $\int_0^1 \partial_tW(t,x) \, \mathrm{d}x = 0$ for all $t\geq 0$, i.e., in a Fourier series
expansion there is no noise on the constant mode. In contrast to the Cahn--Hilliard equation, \eqref{eq:AC} does not preserve
mass as
\[
\partial_t \int_0^1 u(t,x) \, \mathrm{d}x 
\, = \, \eps^2 \int_0^1 u_{xx} \, \mathrm{d}x - \int_0^1 f(u) \, \mathrm{d}x 
+ \int_0^1 \partial_tW(t,x) \, \mathrm{d}x 
\, = \, - \int_0^1 f(u) \, \mathrm{d}x.
\]
Throughout our analysis, we will therefore separately consider the mass conserving Allen--Cahn
equation~\eqref{eq:mAC}
\begin{equation}
\label{eq:mAC} \tag{mAC}
\begin{cases}
\displaystyle \partial_t u \, = \, 
\eps^2 u_{xx} - f(u) + \int_0^1 f(u) \, \mathrm{d}x 
+ \partial_t W , \quad &0<x<1, \; t>0 \\
\,\, u_x \, = \, 0, \quad  &x\in \{0,1\},
\end{cases}
\end{equation}
where we added the integral of $f$ over the interval $(0,1)$ to guarantee the conservation of mass. This can also be seen as an orthogonal projection of the right hand side onto the space orthogonal to the constants.

We denote the standard inner product in $L^2(0,1)$ by
$\inner{\cdot}{\cdot}$, i.e., $\inner{f}{g} = \int_0^1 f(x) g(x) \, \mathrm{d}x$, and the $L^2$-norm by~$\| \cdot \|$.
Other scalar products and norms appearing in subsequent sections will be endowed with a subindex. 
Moreover, we denote the Allen--Cahn operator by 
\[
\CL(\psi) \, = \, \eps^2 \psi_{xx} - f(\psi).
\]
We consider for a given ansatz function $u^h$ (defined later in Definition~\ref{def:profile}) 
the Taylor expansion of~$\CL$ around~$u^h$
\[
\CL(u^h + \psi) \, = \, \CL(u^h) + \CL^h \psi + \CN^h(\psi),
\]
where we define the linearization $\CL^h$ of $\CL$ at the ansatz function $u^h$ and the remaining nonlinear terms $\CN^h(v)$ by
\[
\CL^h \psi \, \coloneqq \, \mathrm{D}\CL(u^h) \psi \, = \, \eps^2 \psi_{xx} - f^\prime(u^h) \psi \quad \text{and} \quad
\CN^h (\psi) \, \coloneqq \, f(u^h) - f(u^h + \psi) + f^\prime(u^h) \psi.
\]
In the prototypical case of the quartic potential, this leads~to
$\CL^h \psi = \eps^2 \psi_{xx} + \psi - 3(u^h)^2 \psi$ and~${\CN^h (\psi)  =  - 3u^h \psi^2 - \psi^3}.$

In the case of the mass conserving Allen--Cahn equation, we have to assume that the Wiener process $W$ has mean zero. 
Furthermore, in order to apply It\^o-formula later, 
we need that solutions to both~\eqref{eq:AC} and~\eqref{eq:mAC} are sufficiently smooth in space and hence, we need that
the stochastic forcing $\partial_t W$ is sufficiently smooth in space, too.
The existence of solutions to both \eqref{eq:AC} and \ref{eq:mAC} at least in the case of a quartic potential is standard
and we will not comment on this in more detail. See \cite{DPZ14}.

In the following we will assume that $W$ which also depends on $\eps$ is given by a $\CQ$-Wiener process satisfying the following regularity properties.

\begin{assume}[Regularity of the Wiener process $W$]
Let $W$ be a $\CQ$-Wiener process in the underlying Hilbert space $L^2(\Omega)$,
$\CQ$ a symmetric operator, and $(e_k)_{k\in \N}$ an orthonormal basis with corresponding eigenvalues $\alpha_k^2$ such that
\[
\CQ e_k \, = \, \alpha_k^2 e_k \qquad \text{and} \qquad W(t) \, = \, \sum_{k \in \N} \alpha_k \beta_k(t) e_k,
\]
for a sequence of independent real-valued standard Brownian motions 
$\left\{ \beta_k \right\}_{k \in \N}$.
\\
We assume that the $\CQ$-Wiener process $W$ satisfies
\[
\mathrm{trace}_{L^2}(\CQ) = \sum_{k \in \N} \alpha_k^2 \eqqcolon \eta_{\eps} < \infty.
\]
Moreover, in the case of the mass conserving Allen--Cahn equation~\eqref{eq:mAC}, we suppose that $W$ takes its values in $L^2_0(\Omega)$, that is, 
\[
\int_0^1 W(t,x) \, \mathrm{d}x \, = \, 0 \quad \text{for all} \;\; t \geq 0.
\]
\end{assume}

Note that our results will thus depend on the squared noise strength $\eta_{\eps}$, which also depends on the parameter~$\eps > 0$.
The exact order in $\eps$ of $\eta_{\eps}$ will be fixed later in the main results.

\section{Construction of the slow manifold}
\label{AC:sec:slowmf}

In this section, we construct the fundamental building block for our
analysis, the slow manifolds~$\CM$ for \eqref{eq:AC} and $\CM_\mu$ for 
\eqref{eq:mAC}. Our construction of the slow manifolds is different 
to the deterministic case \cite{CP89}. We do not introduce a cut-off function to glue together the profiles connecting the stable phases
$\pm 1$. With this cut-off function, the authors took extra care of the exponentially small error away from the interface positions, which is crucial as the motion of the kinks in the deterministic case
is dominated by exponentially small terms. 

In our stochastic case, however, the (polynomial in $\eps$) noise strength dominates and hence, we are not concerned with these exponentially small terms. Thus we use our simplified mani\-folds, but we believe that as in the stochastic Cahn--Hilliard equation (see \cite{ABK12}) the original manifold of \cite{CP89} should work in our case, too.  
The main idea in our construction goes as follows:

We start with a stationary solution $U$ to~\eqref{eq:AC} on the whole
line $\R$, centered at $0$ and connecting the stable phases $-1$ and
$+1$ (Definition~\ref{def:multiAC:heteroclinic}). Using the exponential decay of~$U$ (Proposition~\ref{prop:decayhetero}), 
we introduce a rescaled version in the domain
$[0,1]$ in order to construct an ansatz function $u^h$,
which jumps from $\pm 1$ to $\mp 1$ in an $\CO(\eps)$-neighborhood of
the zeros $h_i$ (Definition~\ref{def:profile}).

Throughout our analysis, we fix the number $N+1$ of transitions. The presented results hold up to times, 
where the distance between two neighboring interfaces gets too small and  we thus cannot 
exclude the possibility of a collapse of two interfaces. This behavior of the stochastic equation
was not studied in full detail yet. In the deterministic case, we refer to the nice work by X.~Chen,~\cite{CHEN04}.
For some ideas and conjectures in the stochastic case, see the thesis of S.\ Weber \cite{Web14}.
Essentially, after an annihilation the number of transitions is reduced to $N-1$ and we can restart
our analysis on a lower-dimensional slow manifold.

Let us now define the building block of our manifold,
the heteroclinic connection on the whole real line connecting $-1$ and $+1$. 
\begin{definition}[The heteroclinic]
\label{def:multiAC:heteroclinic}

Let $U$ be the unique, increasing solution to 
\begin{equation}
\label{eq:heteroclinic}
U'' - f(U) \, = \, 0, \quad U(0) \, = \, 0, \;\; \lim_{x \to \pm \infty} U(x) \, = \, \pm 1.
\end{equation}
In the prototypical case $f(u) = u^3 - u$, we have the explicit solution
$
U(x)  =  \tanh ( x / \sqrt2 ).
$
\end{definition}
The function $U$ is the heteroclinic of the ODE connecting the stable points $-1$ and $+1$.
For a later discussion of the spectrum of the linearized Allen--Cahn operator, we need some relations between the heteroclinic $U$ and 
the potential $F$. We observe that if $U$ is a solution to~\eqref{eq:heteroclinic}, then
\[
\partial_x \left( U_x^2 - 2 F(U) \right) \, = \, 2 U_x \left( U_{xx} - F'(U) \right) \, = \, 0.
\]
From the boundary condition $U(0) = 0$, we conclude that solving equation~\eqref{eq:heteroclinic} is equivalent to solving the first-order ODE
\begin{equation}
\label{eq:equivheteroclinic}
U_x \, = \, \sqrt{2 F(U) }, \quad U(0) \, = \, 0, \;\;  \lim_{x \to \pm \infty} U(x) \, = \, \pm 1.
\end{equation}
By the assumptions on the potential $F$, we see that $\sqrt{F}$ is $C^1$ and hence, the solution to~\eqref{eq:equivheteroclinic} is unique.
Moreover, we observe that all derivatives of $U$ can be expressed as a function of~$U$. For instance, we have 
$U'' = F'(U)$, $U^{(3)} = F''(U) \sqrt{2 F(U)}$, and so on.
Also note that, due to the symmetry of $F$, the mirrored function $-U$ solves the same 
differential equation, but transits from~$ U( - \infty) = +1$ to $ U(+\infty) = -1$.
For some fine properties of $U$, we refer to the work of Carr and Pego
\cite{CP89}, which is based on~\cite{CGS84}.
Crucial for the construction of a slow manifold (cf.~Definition~\ref{def:multiAC:slowmf}) is that the heteroclinic $U$ together with its derivatives decay exponentially fast. The following 
proposition can be shown via phase plane analysis.
For a proof we refer to~\cite{AlFuSt96}. 

\begin{proposition}[Exponential decay of $U$]
\label{prop:decayhetero}
Let $U(x), \, x \in \R,$ be the heteroclinic defined by~\eqref{eq:heteroclinic}. There exist
constants~${c,C > 0}$ such that for $x \geq 0$
\[
 \vert 1 \mp U(\pm x) \vert \, \leq \, C e^{-cx}, \quad \vert U'(\pm x) \vert \, \leq \, Cc e^{-cx}, \quad  \text{and} \quad
 \vert U''(\pm x) \vert \, \leq \,  Cc^2 e^{-cx}.
\]
\end{proposition}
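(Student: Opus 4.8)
The plan is to carry out a phase-plane analysis of the first-order ODE~\eqref{eq:equivheteroclinic}, linearizing the vector field near the equilibria $U = \pm 1$. Consider, say, $x \to +\infty$, so that $U(x) \to 1$. Write $v = 1 - U$, so that $v \to 0^+$ as $x \to \infty$ and $v_x = -U_x = -\sqrt{2F(U)} = -\sqrt{2F(1-v)}$. By (S1), $F(1) = 0$ and $F'(1) = f(1) = 0$ (indeed $1$ is a zero of $f$), while by (S2), $F''(1) > 0$. A second-order Taylor expansion of $F$ at $1$ therefore gives $F(1-v) = \tfrac12 F''(1) v^2 + o(v^2)$ as $v \to 0$, hence $\sqrt{2F(1-v)} = \sqrt{F''(1)}\,v\,(1 + o(1))$. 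Setting $c \coloneqq \sqrt{F''(1)} > 0$, the equation for $v$ reads $v_x = -c\,v\,(1 + o(1))$, which by a standard comparison/Gronwall argument forces $v(x) \leq C e^{-cx}$ for $x$ large, and then (adjusting the constant $C$) for all $x \geq 0$. This is the first bound, $|1 - U(x)| \leq C e^{-cx}$; the bound for $x \to -\infty$ with the limit $-1$ is identical after using the symmetry $F(u) = F(-u)$.

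Next I would bootstrap to the derivative bounds using the algebraic identities already recorded in the excerpt, namely $U_x = \sqrt{2F(U)}$, $U'' = F'(U) = f(U)$, and $U^{(3)} = F''(U)\sqrt{2F(U)}$. For $U'$: since $U_x = \sqrt{2F(U)}$ and, as above, $\sqrt{2F(1-v)} \leq \sqrt{F''(1)}\,v\,(1+o(1)) \leq C c\, e^{-cx}$ using the just-proved bound $v = 1 - U \leq Ce^{-cx}$ (and $F$ being $C^2$ near $1$ so the remainder is controlled), we get $|U'(x)| \leq Cc\, e^{-cx}$ after renaming constants, and again symmetry handles $x \to -\infty$. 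For $U''$: $U'' = f(U)$ and $f(1) = 0$ with $f'(1) = F''(1)$, so $|f(U)| = |f(1 - v)| \leq |f'(1)|\, v\,(1 + o(1)) = F''(1)\, v\,(1+o(1)) \leq C c^2 e^{-cx}$, using $F \in C^3$ so $f = F' \in C^2$ and the linear Taylor bound on $f$ near $1$ is valid. One packages all three estimates with a single pair of constants by taking the larger $C$ and, if necessary, shrinking $c$ slightly.

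The mild technical point — not really an obstacle — is making the $o(1)$ terms rigorous: one must fix a neighborhood $|v| \leq \delta$ of the equilibrium on which the Taylor remainders are genuinely bounded (using $F \in C^3$, hence $f \in C^2$), observe that $U(x)$ enters and stays in this neighborhood for $x \geq x_0$ (because $U$ is monotone with $U \to 1$), run the comparison argument on $[x_0, \infty)$, and then absorb the compact piece $[0, x_0]$ into the constant $C$ since $1 - U$, $U'$, $U''$ are continuous and bounded there. Uniqueness of the increasing solution, already established in the excerpt via $\sqrt F \in C^1$, guarantees there is a single function $U$ to which all of this applies. The only genuine subtlety worth flagging is the choice of the common decay rate $c$: strictly one should use a rate slightly below $\sqrt{F''(1)}$ and $\sqrt{F''(-1)}$ to absorb the lower-order corrections uniformly, but since the proposition only claims existence of \emph{some} $c, C > 0$, this costs nothing.
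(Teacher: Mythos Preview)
Your argument is correct and is precisely the phase-plane analysis the paper alludes to: the paper does not give its own proof of this proposition but only remarks that it ``can be shown via phase plane analysis'' and refers to \cite{AlFuSt96}. Your linearization of the first-order ODE $U_x = \sqrt{2F(U)}$ near the equilibria, followed by a Gronwall/comparison step and then bootstrapping to $U'$ and $U''$ via the identities $U_x = \sqrt{2F(U)}$ and $U'' = f(U)$, is exactly the standard route, and your handling of the technical points (restricting to a neighborhood of the equilibrium, absorbing the compact initial segment into $C$, and allowing a slightly smaller $c$ to absorb remainders) is sound.
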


For $\xi \in \R$, we define a translated and rescaled version of $U$ by
\begin{equation}
\label{eq:rescaledprofile}
U(x; \, \xi, \, \pm 1) \, \coloneqq \, \pm \, U \left( \frac{x-\xi}{\eps} \right).
\end{equation}
One easily verifies that $U(\cdot \,; \, \xi, \pm 1)$ is a solution to the rescaled ODE ${\eps^2 \, U_{xx} - f(U) \, = \, 0}$,
centered~at $U(\xi; \xi, \pm 1) = 0$ and going from $\mp 1$ to $\pm 1$.
Due to the exponential decay of the heteroclinic, 
the rescaled profile $U(x \, ; \xi, \pm 1)$ is 
exponentially close to the states~$\pm 1$, if $x$ is at least 
$\CO(\eps^{1-})$-away from the zero $\xi$.

\begin{lemma}
\label{lem:expdecay}
Let $\kappa > 0$ and $0<\eps < \eps_0$. 
Then, uniformly for $ \vert x -\xi \vert > \eps^{1 - \kappa}$
\[
\vert U(x; \, \xi , \,\pm 1) \vert \, = \, 1 + \CO(\exp).
\]
Similar exponential estimates hold for the derivatives of 
$U(\cdot \, ; \,\xi,\pm 1)$.
\end{lemma}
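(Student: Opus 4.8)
The plan is to reduce the estimate on the rescaled profile directly to the exponential decay of the heteroclinic $U$ on $\R$ stated in Proposition \ref{prop:decayhetero}, via a change of variables. First I would write, by the definition \eqref{eq:rescaledprofile},
\[
\bigl| U(x;\,\xi,\pm1)\bigr| \;=\; \bigl| U\!\left(\tfrac{x-\xi}{\eps}\right)\bigr| ,
\]
and set $y \coloneqq (x-\xi)/\eps$. The hypothesis $|x-\xi| > \eps^{1-\kappa}$ then translates into $|y| > \eps^{-\kappa}$, i.e.\ the argument of $U$ is large as soon as $\eps$ is small. Applying Proposition \ref{prop:decayhetero} with $x = |y| \geq \eps^{-\kappa}$ gives
\[
\bigl| 1 - \operatorname{sgn}(y)\,U(y)\bigr| \;\leq\; C e^{-c|y|} \;\leq\; C e^{-c\,\eps^{-\kappa}} ,
\]
so that $|U(y)| = 1 + \CO\!\bigl(e^{-c\,\eps^{-\kappa}}\bigr)$, uniformly in $y$ with $|y|>\eps^{-\kappa}$, hence uniformly in $x,\xi$ with $|x-\xi|>\eps^{1-\kappa}$. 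This is precisely the claimed bound, with the shorthand $\CO(\exp)$ denoting a quantity bounded by $C\exp(-c\,\eps^{-\kappa})$ for constants $c,C>0$ independent of $\eps$ (and $\eps_0$ chosen small enough that $\eps^{-\kappa}\geq 1$, so Proposition \ref{prop:decayhetero} applies).

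For the derivatives, I would differentiate \eqref{eq:rescaledprofile}: each $x$-derivative brings down a factor $\eps^{-1}$, so
\[
\partial_x^m\, U(x;\,\xi,\pm1) \;=\; \pm\,\eps^{-m}\, U^{(m)}\!\left(\tfrac{x-\xi}{\eps}\right).
\]
Using the decay bounds on $U'$ and $U''$ from Proposition \ref{prop:decayhetero} (and, for higher $m$, the fact noted in the text that every derivative $U^{(m)}$ is a fixed polynomial expression in $U$ and $\sqrt{2F(U)}$, which inherits exponential decay from $1\mp U$ and $U'$), one gets, for $|x-\xi|>\eps^{1-\kappa}$,
\[
\bigl|\partial_x^m\, U(x;\,\xi,\pm1)\bigr| \;\leq\; C\,\eps^{-m}\, e^{-c\,\eps^{-\kappa}} \;=\; \CO(\exp),
\]
since the polynomial prefactor $\eps^{-m}$ is absorbed by the stretched exponential $e^{-c\,\eps^{-\kappa}}$ once $\eps$ is small; this is exactly the asymptotic convention already used throughout the paper.

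There is essentially no serious obstacle here — the lemma is a routine rescaling of Proposition \ref{prop:decayhetero}. The only point requiring a little care is bookkeeping of the constants: one must check that the exponent $c\,\eps^{-\kappa}$ in the final bound genuinely dominates the algebraic blow-up $\eps^{-m}$ coming from differentiation, which is immediate because $e^{-c\,\eps^{-\kappa}} \eps^{-m} \to 0$ as $\eps \to 0$ for every fixed $m$ and $\kappa>0$; and that the constants $c,C$ (and $\eps_0$) can be chosen uniformly, i.e.\ independently of the base point $\xi$ and of $x$ in the admissible range, which holds because after the substitution $y=(x-\xi)/\eps$ the bound no longer depends on $\xi$ at all.
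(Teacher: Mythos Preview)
Your proof is correct and follows exactly the approach the paper has in mind: the paper does not give an explicit proof of this lemma, treating it as an immediate consequence of the exponential decay of the heteroclinic in Proposition~\ref{prop:decayhetero} after the rescaling $y=(x-\xi)/\eps$. Your write-up is a faithful and complete expansion of this implicit argument, including the bookkeeping that the polynomial prefactors $\eps^{-m}$ from differentiation are absorbed by the stretched exponential.
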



Motivated by this lemma, 
we can construct for interface positions $h \in (0,1)^{N+1}$ 
with ${h_1 < h_2 < \ldots < h_{N+1}}$
profiles~${u^h : [0,1] \to \R}$ such that $u^h$ jumps from $\pm 1$ to
$\mp 1$ in a small neighborhood around $h_i$ of size~$\CO(\eps)$.
Locally around $h_i$, we prescribe
\[
u^h(x) \, \approx \, U(x \, ; \, h_i,(-1)^{i+1}).
\]
If $x$ is of order $\eps^{1-\kappa}$ away from $h_i$,
we assured in Lemma~\ref{lem:expdecay} that each profile~ $U(x; \, h_i, \, \pm 1)$ is close to
$\pm 1$ up to an exponentially small error. See Figure \ref{fig:potential}.
Thus we assume that the distance
between two neighboring interfaces and  to the boundary
is bounded from below by~$\eps^{1-\kappa}$ for some small $\kappa > 0$, 
and up to exponentially small error terms, we  define 
$u^h$ as the sum of profiles given by~\eqref{eq:rescaledprofile}. This
leads to the following definition.

\begin{definition}[The profile $u^h$]
\label{def:profile}
Fixing $\rho_\eps = \eps^\kappa$ for $\kappa > 0$ very small, we define the
set $\Omega_{\rho_\eps}$ of admissible interface positions in 
the interval $(0,1)$ by
\[
\Omega_{\rho_\eps} \, \coloneqq \, \left\{ h \in \R^{N+1} \,: \, 0 \, < \, h_1 \, < \, \ldots \, < \, h_{N+1} \, < \, 1, 
\,\, \max_{j=0.\ldots,N+1} \vert h_{j+1} - h_j \vert \, > \, \eps / {\rho_\eps}
\right\},
\]
where $h_0 \coloneqq - h_1$ and $h_{N+2} \coloneqq 2 - h_{N+1}$.
For $x \in (0,1)$ and $h \in \Omega_{\rho_\eps}$, we define 
\[
u^h(x) \, \coloneqq \, \sum_{j=1}^{N+1} \, U\left( x; \, h_j, \,(-1)^{j+1}\right)
+ \beta_N(x),
\]
where the normalization function $\beta_N(x)$ satisfies
$\beta_N(x)  =  \frac{(-1)^N - 1}{2} + \CO(\exp)$ and similarly for all derivatives. (cf.~Remark~\ref{rem:multiAC:renormalization}).
\end{definition}

Note that the positions $h_0$ and $h_{N+2}$ were
introduced to bound the distance of the interface positions from the
boundary $0$ and $1$. Moreover, it is straightforward to check that
the set $\Omega_{\rho_\eps}$ is convex, which is later used to bound 
the Lipschitz constant of the map $h\to u^h$.
\begin{figure}[ht]
   \centering 
  \includegraphics[scale=0.55]{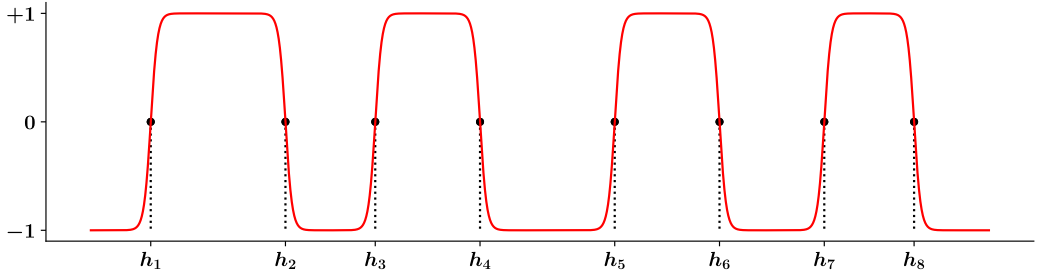}
  \caption{A sketch of the profile $u^h$ for $N=8$. The function is close to $\pm1$ up to sharp transitions around the interface positions.} 
  \label{fig:potential}
\end{figure}
\begin{remark}
\label{rem:multiAC:renormalization}
Let us comment on why we needed to add the normalization term~$\beta_N(x)$
in the definition of $u^h$.
Due to symmetry, we can assume that the multi-kink profile starts
in the phase~$u^h(0) = -1$. Depending on the parity of the number of transitions, we have 
to add a constant to assure this. As $h_j > \eps / {\rho_\eps}$, we obtain
by Lemma~\ref{lem:expdecay} in $x=0$
\[
\sum_{j=1}^{N+1} \, U\left( 0; \, h_j, \,(-1)^{j+1}\right)
\, = \, \sum_{j=1}^{N+1} (-1)^j  + \CO(\exp)
\, = \, -1 + \frac{1 - (-1)^N}{2}  + \CO(\exp).
\]
Therefore, we have to add the correction 
$\tfrac12 ((-1)^N -1) + \CO(\exp)$ to obtain $u^h(0) = -1$.

Moreover, we need to assure that $u^h$ satisfies Neumann boundary conditions. 
By Lemma~\ref{lem:expdecay}, the derivative of~$U(x; \, h_j, \pm 1)$ is exponentially small at $x \in \{0,1\}$. Hence, in order to correct
the boundary condition, we additionally have to add a function of order $\CO(\exp)$.
\end{remark}
Before we finally define the slow manifolds for the (mass conserving) Allen--Cahn equation (Definition~\ref{def:multiAC:slowmf}), we collect
some properties of the multi-kink configurations $u^h$.
\begin{proposition}[Properties of $u^h$]
\label{prop:uh}
The function $u^h$ is an almost stationary solution to~\eqref{eq:AC} in the sense that it satisfies the equation 
only up to an exponentially small error, that is, 
\begin{equation}
\label{eq:ACuh}
\eps^2 u^h_{xx} - f(u^h) \, = \, \CO(\exp), 
\qquad u^h_x(0) \, = \, 0,
\qquad u^h_x(1) \, = \, 0.
\end{equation}

For $i,j \in \{ 1,\ldots, N+1 \}$, we denote the partial derivatives of $u^h$ 
with respect to the $h$--variables by
$u^h_i = \partial_{h_i} u^h$, 
$u^h_{ij} = \partial_{h_i} \partial_{h_j}  u^h$,
and third derivatives accordingly. We have
\begin{equation}
\label{eq:uhx}
u^h_i(x) \, = \, U'(x; \, h_i, \, (-1)^{i+1}) + \CO(\exp)
\, = \, (-1)^i \,\frac1{\eps} \,U' \left( \dfrac{x-h_i}{\eps} \right)
+ \CO(\exp).
\end{equation}
Furthermore, the following estimates hold true in $L^2(0,1)$:
\begin{equation*}
\begin{split}
\inner{u^h_i}{u^h_j} \, &= \, \CX \eps^{-1} \delta_{ij} + \CO(\exp), \qquad \quad \;\;
\| u^h_{ij}\| \, = \, \CO (\eps^{-3/2}) \delta_{ij} + \CO(\exp), \\
\inner{u^h_{kk}}{u^h_k} \, &= \, \CO(\exp), \qquad  \quad \quad \text{and} \quad \quad
\| u^h_{kkk}\| \, = \, \CO(\eps^{-5/2}),
\end{split}
\end{equation*}
where $\CX \coloneqq \int_\R U'(y)^2 \, \mathrm{d}y$.
In $L^\infty(0,1)$, we have
\[
\|u^h\|_{\infty} \, = \, \CO(1) \qquad \text{and} \qquad
\|u^h_i\|_{\infty} \, = \, \CO(\eps^{-1}).
\]
Moreover all higher derivatives of $u^h$ are uniformly exponentially small
if the variables are mixed.
\end{proposition}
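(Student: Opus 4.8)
The proof of Proposition~\ref{prop:uh} is essentially a collection of routine but careful computations built on the exponential decay estimates of Lemma~\ref{lem:expdecay} and the elementary ODE relations for $U$. Here is how I would organize it.

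\textbf{Step 1: The almost-stationarity \eqref{eq:ACuh}.}
Writing $u^h = \sum_{j=1}^{N+1} U(\cdot\,;h_j,(-1)^{j+1}) + \beta_N$ and applying the operator $\CL$, I would use that each single profile $U(\cdot\,;h_j,(-1)^{j+1})$ exactly solves $\eps^2 v_{xx} - f(v) = 0$. The error therefore comes from two sources: the nonlinearity $f$ is not additive, so $f(u^h) \ne \sum_j f(U(\cdot\,;h_j,\cdot))$, and $\beta_N$ contributes. For the nonlinearity, I would split the interval $[0,1]$ into the $\eps^{1-\kappa}$-neighborhoods of the interfaces and their complement. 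Near a given $h_i$, all \emph{other} profiles $U(\cdot\,;h_j,\cdot)$, $j\ne i$, are exponentially close to constants $\pm1$ by Lemma~\ref{lem:expdecay} (since $|h_i-h_j| \gtrsim \eps/\rho_\eps = \eps^{1-\kappa}$), so locally $u^h$ equals a single heteroclinic plus an exponentially small perturbation plus a sum of $\pm1$'s adjusted by $\beta_N$ to land exactly on a translate; a Taylor expansion of $f$ then shows the defect is $\CO(\exp)$. Away from all interfaces every profile is within $\CO(\exp)$ of $\pm1$, a zero of $f$, so again the defect is exponentially small. The boundary conditions follow from Remark~\ref{rem:multiAC:renormalization}: the $U'$ terms are exponentially small at $x\in\{0,1\}$ by Lemma~\ref{lem:expdecay}, and $\beta_N$ is constructed to absorb the remainder.

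\textbf{Step 2: The derivative formula \eqref{eq:uhx} and the mixed-derivative smallness.}
Differentiating $u^h$ in $h_i$ only the $i$-th summand and $\beta_N$ depend on $h_i$; $\partial_{h_i} U((x-h_i)/\eps\,;\cdot) = -\tfrac1\eps U'((x-h_i)/\eps)$ up to the sign $(-1)^{i+1}\cdot(-1) = (-1)^i$ (note the extra sign from the chain rule), and $\partial_{h_i}\beta_N = \CO(\exp)$ since all derivatives of $\beta_N$ are exponentially small. This gives \eqref{eq:uhx}. Iterating, $u^h_{ij}$ for $i\ne j$ is a product of things supported near $h_i$ and near $h_j$ respectively — but these supports are separated by $\eps^{1-\kappa}$, so on the overlap everything is $\CO(\exp)$; hence all genuinely mixed higher derivatives are uniformly exponentially small. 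The diagonal derivatives are $u^h_{ii} = \tfrac1{\eps^2}U''((x-h_i)/\eps)\cdot(-1)^{i+1} + \CO(\exp)$ and $u^h_{iii} = -\tfrac1{\eps^3}U'''(\cdot)(-1)^{i+1} + \CO(\exp)$.

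\textbf{Step 3: The $L^2$ and $L^\infty$ norms.}
For $\inner{u^h_i}{u^h_j}$ with $i\ne j$, the two factors have $\eps^{1-\kappa}$-separated effective supports, so the inner product is $\CO(\exp)$. For $i=j$, substitute $y=(x-h_i)/\eps$: $\inner{u^h_i}{u^h_i} = \tfrac1{\eps^2}\int_0^1 U'((x-h_i)/\eps)^2\,dx + \CO(\exp) = \tfrac1\eps\int_{\R}U'(y)^2\,dy + \CO(\exp) = \CX\eps^{-1} + \CO(\exp)$, where extending the integral to all of $\R$ costs only $\CO(\exp)$ by Proposition~\ref{prop:decayhetero}. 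The same rescaling gives $\|u^h_{ii}\|^2 = \eps^{-3}\int_{\R}U''(y)^2\,dy\,(1+o(1))$, i.e. $\|u^h_{ii}\| = \CO(\eps^{-3/2})$, and $\|u^h_{iii}\| = \CO(\eps^{-5/2})$; the $\CX^{1/2}\eps^{-1/2}$-type constants are finite because $U',U'',U'''$ are all exponentially decaying (Proposition~\ref{prop:decayhetero} and the relations $U''=F'(U)$, $U'''=F''(U)\sqrt{2F(U)}$, which are bounded and decaying). For $\inner{u^h_{kk}}{u^h_k}$, note $\int_{\R}U''U'\,dy = \tfrac12[(U')^2]_{-\infty}^{\infty} = 0$ since $U'\to0$ at $\pm\infty$, so after rescaling the leading term vanishes and only $\CO(\exp)$ survives. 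The $L^\infty$ bounds are immediate: $\|u^h\|_\infty$ is bounded because $U$ is bounded and the sum near any point reduces to one heteroclinic plus constants, and $\|u^h_i\|_\infty = \tfrac1\eps\|U'\|_\infty + \CO(\exp) = \CO(\eps^{-1})$.

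\textbf{Main obstacle.}
None of the individual estimates is deep; the real work — and the place where one must be careful rather than clever — is the bookkeeping of the exponentially small errors in Step 1. One has to verify that the Taylor remainder of $f$ evaluated at ``one heteroclinic plus an $\CO(\exp)$ perturbation,'' multiplied by the worst-case polynomial-in-$\eps^{-1}$ prefactors that appear after differentiation and integration, is still genuinely $\CO(\exp)$ uniformly over $h\in\Omega_{\rho_\eps}$. This hinges on the constraint $|h_{j+1}-h_j| > \eps/\rho_\eps = \eps^{1-\kappa}$ feeding into Lemma~\ref{lem:expdecay} with a decay rate $\exp(-c\eps^{-\kappa})$ that beats any power of $\eps$; making the interplay between $\kappa$, $\eps_0$, and the constants $c,C$ of Proposition~\ref{prop:decayhetero} precise, and checking it survives all the derivatives taken (up to third order), is the one step that demands attention.
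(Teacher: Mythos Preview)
Your proof is correct and follows essentially the same route as the paper: the almost-stationarity and the derivative formula are read off from the definition and Lemma~\ref{lem:expdecay}, the diagonal $L^2$-norms come from the rescaling $y=(x-h_i)/\eps$ and extending the integral to $\R$ at the cost of $\CO(\exp)$, and $\inner{u^h_{kk}}{u^h_k}$ vanishes to leading order because $\int U''U' = \tfrac12[(U')^2]$ evaluates to zero (the paper writes this as the boundary term $\tfrac12\eps^{-2}\bigl[U'((1-h_k)/\eps)^2 - U'(-h_k/\eps)^2\bigr]=\CO(\exp)$).

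One small correction to your Step~2: you write that for $i\ne j$ the mixed derivative $u^h_{ij}$ ``is a product of things supported near $h_i$ and near $h_j$''. It is not a product; it is a second partial derivative. The actual reason it is $\CO(\exp)$ is simpler: in $u^h=\sum_k U(\cdot\,;h_k,(-1)^{k+1})+\beta_N$ each summand $U(\cdot\,;h_k,\cdot)$ depends only on the single parameter $h_k$, so $\partial_{h_i}\partial_{h_j}$ annihilates every term of the sum when $i\ne j$, leaving only $\partial_{h_i}\partial_{h_j}\beta_N$, which is $\CO(\exp)$ by construction. The ``separated supports'' picture is what drives the off-diagonal \emph{inner products} $\inner{u^h_i}{u^h_j}$, not the mixed partials themselves.
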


\begin{proof}

Equations \eqref{eq:ACuh} and \eqref{eq:uhx} follow directly from Definition~\ref{def:profile}, 
\eqref{eq:rescaledprofile} and Lemma~\ref{lem:expdecay}
By Lemma~\ref{lem:expdecay}, we also see that 
$U'(x \, ; \, h_i, (-1)^{i+1})$ is exponentially small for 
$\vert x - h_i \vert > \eps / {\rho_\eps}$ and thus we 
obtain $\inner{u^h_i}{u^h_j} = \CO(\exp)$ for $i \neq j$.
Moreover, the same argument implies that higher derivatives
with respect to different positions $h_i$ and $h_j$ are exponentially
small.
The uniform bounds are a direct consequence from the definition of $u^h$ and~\eqref{eq:uhx}. 
The $L^2$-norm of $u^h_k$ is given by 
\begin{equation*}
\begin{split}
\|u^h_k\|^2 \, &= \,
\eps^{-2} \int_0^1 U' \left( \dfrac{x-h_k}{\eps} \right)^2 \, 
\mathrm{d}x + \CO(\exp) \\
&= \, \eps^{-1} \int_{- h_k / \eps}^{(1-h_k) / \eps} U'(y)^2 \, 
\mathrm{d}y + \CO( \exp) 
\, = \, \eps^{-1} \int_\R U'(y)^2 \, \mathrm{d}y + \CO(\exp).
\end{split}
\end{equation*}
In the last step, we used that $h \in \Omega_{\rho_\eps}$ and 
$\vert U(x) \vert \leq c e^{-c\vert x \vert}$ 
by Proposition~\ref{prop:decayhetero}. Thus, we obtain
\[
\int_{- \infty}^{- h_k / \eps} U'(y)^2 \, \mathrm{d}y \, \leq \,
\int_{- \infty}^{- 1/{\rho_\eps}} U'(y)^2 \, \mathrm{d}y 
\, \leq \, c \int_{- \infty}^{- 1/{\rho_\eps}} e^{-c \vert y \vert} \, \mathrm{d}y 
\, = \, \CO(\exp),
\]
and with the same argument the integral at $\infty$ is exponentially small as well.
Analogously, the $n$-th derivative with respect to $h_k$ is then given by
\begin{equation*}
\| \partial^n_{h_k} u^h \|^2 \, = \, \eps^{-2n+1} \int_\R U^{(n)}(y)^2 \, \mathrm{d}y + \CO(\exp).
\end{equation*}
The mixed term can be estimated as follows:
\begin{equation*}
\begin{split}
\inner{u^h_{kk}}{u^h_k} 
\, &= \, \eps^{-3} \int_0^1 U'' \left( \dfrac{x-h_k}{\eps} \right)
U' \left( \dfrac{x-h_k}{\eps} \right) \, \mathrm{d}x 
+ \CO(\exp) \\
&= \, \frac12 \eps^{-2} \left[ U' \left( \dfrac{1-h_k}{\eps} \right)^2
- U' \left(-\dfrac{h_k}{\eps} \right)^2 \right] + \CO(\exp) 
\, = \, \CO(\exp). \qedhere
\end{split}
\end{equation*}
\end{proof}

We finally introduce the approximate slow manifolds for the 
stochastic (mass conserving) Allen--Cahn equation. The second manifold
will play an important role in the study of the mass conserving Allen--Cahn
equation~\eqref{eq:mAC}, while the first one will be used for the analysis of~\eqref{eq:AC} without this
constraint.
\begin{definition}[Slow manifolds]
\label{def:multiAC:slowmf}
For $\Omega_{\rho_\eps}$ and $u^h$ given by Definition~\ref{def:profile},
we define the \textit{approximate slow manifold} by
\[
\CM \, \coloneqq \, \left\{ u^h \, : \, h \in \Omega_{\rho_\eps} \right\}.
\]
Fixing a mass $\mu \in (-1,1)$, we define the 
\textit{mass conserving approximate manifold} by
\[
\CM_\mu \, \coloneqq \, \left\{ u^h \in \CM \, : 
\, \int_0^1 u^h(x) \, \mathrm{d}x \, = \, \mu \right\}.
\]
\end{definition}

Note that we have a global chart for $\CM$. Later in Lemma~\ref{lem:implfct} , we will see that this
also holds true for $\CM_\mu$, as it is the manifold $\CM$ intersected by
a vector space of codimension 1. 

We have to compute the tangent vectors for $\CM$ and $\CM_\mu$, since
we need them later in Definition~\ref{def:Fermi} to define a coordinate system around the slow
manifolds. We immediately see
that the tangent space of the slow manifold $\CM$  at $u^h$ with $h \in \Omega_{\rho_\eps}$ is given by
\[
\CT_{u^h} \CM \, = \, \mathrm{span} \left\{ u^h_i \, : \, i\,=\,1 \,,\,\ldots\,,\,N+1 \right\}.
\]
With Proposition~\ref{prop:uh} one checks readily that the tangent vectors $u^h_i$ have essentially 
(up to an exponentially small error) disjoint support and therefore, 
$\CT_{u^h} \CM$ is non-degenerate and has full dimension $N+1$.
For the second manifold $\CM_\mu$ we will see in the following lemma that, due to 
mass conservation, it is possible to reduce the parameter space 
$\Omega_{\rho_\eps}$ by one dimension. The proof is a simple argument based on the implicit function theorem and omitted. For details see \cite{BaXun94} or \cite{SchindlerPhD}.

\begin{lemma}
\label{lem:implfct}
There is a smooth map $h_{N+1} : [0,1]^N \to \R$ such that 
\[
u^h \in \CM_\mu \; \iff \; h = (\xi ,h_{N+1}(\xi)) \in \Omega_{\rho_\eps} 
\quad \text{with} \quad  \xi \, = \, (h_1, \, \ldots \,, h_N).
\]
Moreover, the partial derivatives of $h_{N+1}$ with respect to 
$h_i,\, i= 1,\ldots,N,$ are given by
\[
\dfrac{\partial h_{N+1}}{\partial h_i} \, = \, (-1)^{N-i} +  \CO(\exp).
\]
\end{lemma}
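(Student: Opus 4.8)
The plan is to apply the implicit function theorem to the mass functional restricted to $\CM$. Define $G : \Omega_{\rho_\eps} \to \R$ by
\[
G(h) \, = \, \int_0^1 u^h(x) \, \mathrm{d}x \, - \, \mu,
\]
which is smooth since $h \mapsto u^h$ is smooth. The condition $u^h \in \CM_\mu$ is exactly $G(h) = 0$. To solve for the last coordinate $h_{N+1}$ in terms of $\xi = (h_1,\ldots,h_N)$, I first compute $\partial_{h_{N+1}} G$. Differentiating under the integral and using \eqref{eq:uhx},
\[
\frac{\partial G}{\partial h_{N+1}} \, = \, \int_0^1 u^h_{N+1}(x) \, \mathrm{d}x \, = \, (-1)^{N+1} \frac1\eps \int_0^1 U'\!\left(\frac{x - h_{N+1}}{\eps}\right) \mathrm{d}x + \CO(\exp).
\]
Since $h \in \Omega_{\rho_\eps}$ keeps $h_{N+1}$ at distance $\geq \eps/\rho_\eps$ from both endpoints, the rescaled integral equals $\int_\R U'(y)\,\mathrm{d}y + \CO(\exp) = (U(\infty) - U(-\infty)) + \CO(\exp) = 2 + \CO(\exp)$ by Proposition~\ref{prop:decayhetero}. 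Hence $\partial_{h_{N+1}} G = 2(-1)^{N+1} + \CO(\exp) \neq 0$ for $\eps$ small, so the implicit function theorem yields a smooth function $h_{N+1}(\xi)$ with $G(\xi, h_{N+1}(\xi)) = 0$ locally; the equivalence claimed in the lemma then follows, with the domain being the $\xi$ for which $(\xi, h_{N+1}(\xi))$ lies in $\Omega_{\rho_\eps}$ (after checking this is all of $[0,1]^N$ in the regime of interest, or simply restating the conclusion on that set, as the paper does).

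For the formula for the partial derivatives, I differentiate the identity $G(\xi, h_{N+1}(\xi)) = 0$ with respect to $h_i$, $i = 1,\ldots,N$, to get
\[
\frac{\partial h_{N+1}}{\partial h_i} \, = \, - \left( \frac{\partial G}{\partial h_{N+1}} \right)^{-1} \frac{\partial G}{\partial h_i} \, = \, - \frac{\int_0^1 u^h_i(x)\,\mathrm{d}x}{\int_0^1 u^h_{N+1}(x)\,\mathrm{d}x}.
\]
The same computation as above gives $\int_0^1 u^h_i\,\mathrm{d}x = 2(-1)^i + \CO(\exp)$ for each $i$, since $h_i$ is also $\eps/\rho_\eps$-separated from the boundary. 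Therefore $\partial h_{N+1}/\partial h_i = -(-1)^i / (-1)^{N+1} + \CO(\exp) = (-1)^{N-i} + \CO(\exp)$, using that $(-1)^{-(N+1)} = (-1)^{N+1}$ and $-(-1)^i(-1)^{N+1} = (-1)^{N-i}$. This is the asserted identity.

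The only genuine point requiring care — the "main obstacle," such as it is — is bookkeeping the exponentially small errors: one must check that differentiating $\beta_N$ and the boundary-correction terms in Definition~\ref{def:profile} under the integral still produces $\CO(\exp)$ contributions (which follows from the stated properties of $\beta_N$ and its derivatives), and that the $\CO(\exp)$ error in the denominator does not spoil the expansion of the quotient — harmless since the denominator is bounded away from zero. Everything else is the standard implicit function theorem argument, which is why the paper omits it and refers to \cite{BaXun94, SchindlerPhD}.
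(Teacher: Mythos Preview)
Your argument is correct and is precisely the implicit function theorem computation the paper alludes to but omits, citing \cite{BaXun94, SchindlerPhD}; in particular your evaluation $\int_0^1 u^h_i\,\mathrm{d}x = 2(-1)^i + \CO(\exp)$ matches the identical computation the paper itself carries out later in the proof of Theorem~\ref{thm:gapMAC}. There is nothing to add.
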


We can then write
\begin{equation}
\label{def:admissiblemasscon}
\CM_\mu \, = \, \left\{ u^h \, : \, h \in \CA_{\rho_\eps} \right\} \; \text{with}
\;
\CA_{\rho_\eps} \, \coloneqq \, \Big\{ (\xi,h_{N+1}(\xi))  \in \Omega_{\rho_\eps} \, : \xi\in[0,1]^N \Big\}.
\end{equation}
In the sequel, we denote the elements of $\CM_\mu$ by $u^\xi$. As before, we denote the partial derivatives of $u^\xi$ with 
respect to $\xi_i$ by $u^\xi_i$, and higher derivatives accordingly.\\
The tangent space of the mass conserving manifold $\CM_\mu$ at $u^\xi$ 
is given by
\begin{equation*}
\CT_{u^\xi} \CM_\mu 
\, = \,  \mathrm{span} 
\left\{ 
u^\xi_i \, = \, u^h_i + (-1)^{N-i} u^h_{N+1} + \CO(\exp) \, : \, i\, = \, 1,\ldots,N  \right\}.
\end{equation*}
Here, we used that by the chain rule and Lemma~\ref{lem:implfct}
\begin{equation*}
\dfrac{\partial u^\xi}{\partial \xi_i} \, = \,
\dfrac{\partial u^h}{\partial h_i} +
\dfrac{\partial h_{N+1}}{\partial h_i} \cdot
\dfrac{\partial u^h}{\partial h_{N+1}}  
\, = \, \dfrac{\partial u^h}{\partial h_i} + (-1)^{N-i} 
\dfrac{\partial u^h}{\partial h_{N+1}} + \CO(\exp).
\end{equation*}
This is a linear combination of tangent vectors of $\CM$.
Since the functions $u^h_i$ span an $(N+1)$-dimensional space and the
transformation matrix converting these functions into 
$\{u^\xi_1, \ldots, u^\xi_N \}$ has full rank $N$, we immediately
obtain that the tangent space $\CT_{u^h} \CM_\mu$ is 
non-degenerate.

\section{The linearized Allen--Cahn operator}
\label{AC:sec:specAC}

Important for the stability of the slow manifolds are spectral estimates concerning the linearization of the Allen--Cahn
operator at a multi-kink configuration. In more detail,
for $v$ orthogonal to the tangent space of $\CM$ or $\CM_{\mu}$,
we aim to bound the quadratic form $\langle \CL^h v,v \rangle$.
First, we consider the singular Sturm--Liouville problem
\begin{equation}
\label{eq:SturmLiouville}
Ly \, = \, y'' - f^\prime(U)y \, = \, \lambda y
\end{equation}
in $L^2(\R)$, where $U$ is the heteroclinic solution defined by~\eqref{eq:heteroclinic}. 
Note that the ODE~\eqref{eq:heteroclinic} directly implies that
$U'$ is an eigenfunction of $L$ corresponding to the eigenvalue
zero. As $U' > 0$, we also know that zero must be the largest eigenvalue.
The following description of the spectral behavior of $L$ orthogonal to $U^\prime$ is taken from~\cite{OR07}, Proposition~3.2.

\begin{lemma}[Spectral gap of the Allen--Cahn operator, \cite{OR07}, Proposition~3.2]
\label{lem:specgapliouville}
There exists a constant $\lambda_0 > 0$ such that if $v \in H^1(\R)$ satisfies
\[
\mathrm{(i)} \; v(0) \, = \, 0 \qquad \text{or} \qquad \mathrm{(ii)} \; \int_\R v(s) U^\prime(s) \, \mathrm{ds} \, = \, 0,
\]
then it holds true that
\[
\inner{Lv}{v}_{L^2(\R)} \, = \, \int_\R \left[ - v^\prime(s)^2  -  f^\prime(U(s)) v(s)^2 \right] \, \mathrm{ds} \, \leq \, - \lambda_0 \|v\|_{L^2(\R)}^2 .
\]
\end{lemma}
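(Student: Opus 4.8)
The plan is to reduce the spectral gap estimate to a known fact about the single-kink Schr\"odinger operator and to use the explicit structure available in the prototypical quartic case. Recall that $L = \partial_x^2 - f'(U)$ and that, in the case $f(u)=u^3-u$, one has $f'(U) = 3U^2-1$, so $L = \partial_x^2 - (3U^2-1)$; using the explicit heteroclinic $U(x)=\tanh(x/\sqrt2)$ this becomes $L = \partial_x^2 + 1 - 3\tanh^2(x/\sqrt2)$, a P\"oschl--Teller operator whose spectrum is completely known. Its point spectrum consists of exactly two simple eigenvalues, $\lambda_1 = 0$ with eigenfunction proportional to $U' = \operatorname{sech}^2(x/\sqrt2)$ (which is strictly positive, hence the ground state, so $0$ is indeed the top of the spectrum, as already noted), and a second eigenvalue $\lambda_2 = -3/2$; the essential spectrum is $(-\infty, -1]$. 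Thus on the $L^2(\R)$-orthogonal complement of $U'$ one has $\langle Lv, v\rangle \le -\tfrac32\|v\|^2$, which gives case (ii) directly with $\lambda_0 = 3/2$ (or any value in $(0,\tfrac32]$ if we want room for general potentials satisfying (S1)--(S3)).

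For case (i), the constraint $v(0)=0$ means $v$ is orthogonal to the ground state in a different sense; the cleanest argument is variational. Write $v = v_{\mathrm{odd}} + v_{\mathrm{even}}$ with respect to the reflection $x\mapsto -x$; since $f'(U(x)) = f'(U(-x))$ by the evenness of $U^2$, the operator $L$ commutes with this reflection and the quadratic form splits, $\langle Lv,v\rangle = \langle Lv_{\mathrm{odd}},v_{\mathrm{odd}}\rangle + \langle Lv_{\mathrm{even}},v_{\mathrm{even}}\rangle$. The ground state $U'$ is even, so on the odd subspace the form is bounded above by the second Dirichlet-type eigenvalue, which is $\le -\lambda_0$. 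The condition $v(0)=0$ forces $v_{\mathrm{even}}(0)=0$; restricting $L$ to even functions vanishing at the origin is, via $x\mapsto |x|$, unitarily equivalent to a half-line problem with a Dirichlet condition at $0$, whose bottom eigenvalue is strictly below $0$ because the full-line ground state $U'$ does not vanish at $0$ and hence is not an admissible test function — so the even, Dirichlet-at-$0$ part of the form is also bounded by $-\lambda_0$ after possibly shrinking $\lambda_0$. Taking the minimum of the two constants finishes case (i).

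Concretely, I would organize the write-up as: (1) state that we may restrict to the quartic potential and record $L = \partial_x^2 + 1 - 3\tanh^2(x/\sqrt2)$; (2) invoke the classical P\"oschl--Teller spectral computation (citing \cite{OR07} or a standard reference) to get the two eigenvalues $0$ and $-3/2$ and the essential spectrum $(-\infty,-1]$, yielding case (ii); (3) run the even/odd decomposition plus Dirichlet-at-zero argument for case (i); (4) note the sign convention, namely $\langle Lv,v\rangle = -\int (v')^2 + \int (1-3U^2)v^2 = \int[-(v')^2 - f'(U)v^2]$ in the quartic case, matching the displayed formula. Since the statement is explicitly borrowed from \cite{OR07}, Proposition~3.2, the honest thing is to present this as a sketch and defer to that reference for the details.

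The main obstacle is case (i): getting a \emph{uniform} gap for all $H^1$ functions with $v(0)=0$ is slightly subtle because such $v$ need not be orthogonal to the ground state in $L^2$, so the naive min-max bound by the second eigenvalue does not immediately apply. The even/odd splitting is what makes it work, and the key quantitative input is that the half-line operator with a Dirichlet condition at $0$ has strictly negative bottom of spectrum — equivalently, that the positive ground state $U'$ of the full-line operator is bounded away from zero at the origin (indeed $U'(0) = 1/\sqrt2 > 0$), so it cannot be approximated in energy by functions vanishing there. Making this strict inequality quantitative (to extract an explicit $\lambda_0$, rather than merely a positive one) is where one would either do the P\"oschl--Teller computation by hand or simply cite \cite{OR07}.
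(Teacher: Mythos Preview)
The paper does not actually prove this lemma: it is quoted from \cite{OR07}, Proposition~3.2, and the only additional content the paper supplies is the remark immediately following the statement, recording that in the quartic case $\lambda_0=3/2$, that $U\sqrt{U'}$ is the second eigenfunction, and that the essential spectrum is $(-\infty,-2]$. Your sketch therefore already goes beyond what the paper does, and it is substantially correct.

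One computational slip to fix: you write that the essential spectrum is $(-\infty,-1]$, but since $1-3U^2 \to -2$ as $|x|\to\infty$ it is $(-\infty,-2]$ (as the paper itself notes). This is not cosmetic: with your stated endpoint the purported second eigenvalue $-3/2$ would lie \emph{inside} the essential spectrum and could not play the role of an isolated gap. With the correct endpoint $-2$ the picture is exactly as you describe: two simple eigenvalues $0$ and $-3/2$ sitting above $(-\infty,-2]$, which gives case~(ii) with $\lambda_0=3/2$.

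Your even/odd argument for case~(i) is sound and in fact yields the same sharp constant, so no shrinking of $\lambda_0$ is needed. On the odd subspace the ground state is the odd eigenfunction $U\sqrt{U'}$ at $-3/2$. On the even subspace with $v(0)=0$, the restriction to $[0,\infty)$ is exactly the half-line problem with Dirichlet condition at $0$; its ground state is again $U\sqrt{U'}\big|_{[0,\infty)}$ (it solves the ODE, vanishes at $0$, decays at $+\infty$, and is strictly positive on $(0,\infty)$, hence nodeless), with eigenvalue $-3/2$. This is a cleaner way to make the ``strictly below $0$'' step quantitative than the variational remark you gave about $U'(0)\neq 0$.
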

For the simplicity of some arguments, we focus in the remainder on the classical cubic potential $f(u) = u^3 - u$.  
In this case, one can show that $\lambda_0 = 3/2$ and $U\sqrt{U'}$ serves as corresponding eigenfunction of~\eqref{eq:SturmLiouville} (cf.~\cite{AlFuSt96}). 
The eigenfunction $U \sqrt{U'}$ has exactly one zero and hence it
corresponds to the second largest eigenvalue. As $ \lim_{\vert x \vert \to \infty} (1-3U^2) = - 2$, we also know by a
standard argument for Schrödinger operators that the essential spectrum
lies in the interval~$(- \infty, -2 ]$. For more details on the spectrum of Schrödinger operators, we refer to~\cite{HS96}. The standard arguments
for Sturm--Liouville problems can be found in~\cite{Walter98}.

With the spectral gap of Lemma~\ref{lem:specgapliouville} at hand, we consider the linearization of the Allen--Cahn operator at a multi-kink 
state~$u^h \in \CM$. The 
following theorem  gives a bound on the quadratic form orthogonal to the tangent space $\CT_h \CM$. Essentially, up to exponentially small 
terms, the support of the tangent  vectors $u^h_i$ is concentrated in a small neighborhood of width~$\eps$ around the zero $h_i$.
Hence, it is sufficient to study the quadratic form locally around each~$h_i$. After rescaling, we essentially arrive at the setting of 
Theorem~\ref{lem:specgapliouville} and the spectral gap of order~$1$ is transferred to our problem.

\begin{theorem}[Spectral gap for \eqref{eq:AC}]
\label{thm:gapAC}
Let $u^h \in \CM$ and $v \perp u^h_i$ for any~$ i=1, \ldots, N+1$. Then, for $\lambda_0$ given in Lemma~\ref{lem:specgapliouville}, we have
\[
\langle \CL^h v, v \rangle \, \leq \, \left(- \frac12 \lambda_0 + \CO({\rho_\eps}^2) \right) \Vert v \Vert^2.
\]
\end{theorem}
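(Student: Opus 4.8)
The strategy is to localize the quadratic form $\langle \CL^h v, v\rangle$ around the individual interfaces $h_i$ using a smooth partition of unity, reduce each local contribution to the whole-line problem \eqref{eq:SturmLiouville} after rescaling $x \mapsto (x-h_i)/\eps$, and apply the spectral gap of Lemma~\ref{lem:specgapliouville}. The orthogonality $v \perp u^h_i$ will, up to exponentially small errors, translate into the local orthogonality condition $\mathrm{(ii)}$ of Lemma~\ref{lem:specgapliouville} on each rescaled piece, because by \eqref{eq:uhx} the tangent vector $u^h_i$ is, up to $\CO(\exp)$, a rescaled copy of $U'$ supported essentially in an $\CO(\eps/\rho_\eps)$-neighborhood of $h_i$.

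Concretely, I would first fix a smooth partition of unity $\{\chi_i\}_{i=1}^{N+1}$ subordinate to the cover of $(0,1)$ by intervals $I_i$ centered at $h_i$ of width comparable to the minimal interface distance (at least $\eps/\rho_\eps$, using $h \in \Omega_{\rho_\eps}$), with $\sum_i \chi_i^2 = 1$ and $\|\chi_i'\|_\infty = \CO(\rho_\eps/\eps)$. The IMS-type localization identity gives
\[
\langle \CL^h v, v\rangle \, = \, \sum_{i=1}^{N+1} \langle \CL^h(\chi_i v), \chi_i v\rangle \, + \, \eps^2 \sum_{i=1}^{N+1} \|\chi_i' v\|^2,
\]
where the last ``localization error'' term is $\CO(\rho_\eps^2)\|v\|^2$ since $\eps^2 (\rho_\eps/\eps)^2 = \rho_\eps^2$. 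On each piece, $\CL^h(\chi_i v) = \eps^2 (\chi_i v)_{xx} - f'(u^h)(\chi_i v)$, and on the support of $\chi_i$ the coefficient $f'(u^h)$ agrees with $f'(U(\cdot;h_i,(-1)^{i+1}))$ up to an exponentially small error (by Lemma~\ref{lem:expdecay}, since the other profiles are within $\CO(\exp)$ of $\pm 1$ there and $\beta_N$ is locally constant up to $\CO(\exp)$). Rescaling $y = (x - h_i)/\eps$ and writing $w_i(y) = (\chi_i v)(h_i + \eps y)$, the local form becomes $\eps^{-1}\langle L w_i, w_i\rangle_{L^2(\R)} + \CO(\exp)\|v\|^2$ with $L$ as in \eqref{eq:SturmLiouville} (extending $w_i$ by zero).

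Next I would verify the orthogonality hypothesis: $\langle w_i, U'\rangle_{L^2(\R)}$ equals, after undoing the rescaling and up to the $\CO(\exp)$ tails of $U'$ outside the support of $\chi_i$, a constant multiple of $\langle \chi_i v, u^h_i\rangle$. Since $v \perp u^h_i$ and (again by disjoint-support arguments from Proposition~\ref{prop:uh}) $\langle (1-\chi_i) v, u^h_i\rangle = \CO(\exp)\|v\|$, we get $|\langle w_i, U'\rangle_{L^2(\R)}| = \CO(\exp)\|w_i\|$. A short perturbation argument then upgrades Lemma~\ref{lem:specgapliouville}(ii) to functions that are merely \emph{almost} orthogonal to $U'$: splitting $w_i = w_i^\perp + c_i U'/\|U'\|$ with $c_i = \CO(\exp)\|w_i\|$, one has $\langle L w_i, w_i\rangle = \langle L w_i^\perp, w_i^\perp\rangle + c_i^2 \langle L U', U'\rangle/\|U'\|^2 \le -\lambda_0\|w_i^\perp\|^2 + \CO(\exp)\|w_i\|^2 \le (-\lambda_0 + \CO(\exp))\|w_i\|^2$, using $LU' = 0$. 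Undoing the rescaling gives $\langle \CL^h(\chi_i v), \chi_i v\rangle \le (-\lambda_0 + \CO(\exp))\|\chi_i v\|^2$, and summing over $i$ with $\sum_i \|\chi_i v\|^2 = \|v\|^2$ yields $\langle \CL^h v, v\rangle \le -\lambda_0 \|v\|^2 + \CO(\rho_\eps^2)\|v\|^2$, which is even slightly stronger than claimed; the factor $\tfrac12$ in the statement provides comfortable slack to absorb the partition-of-unity constants and the case where one chooses $\sum_i \chi_i^2 \le 1$ only approximately.

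\textbf{Main obstacle.} The delicate point is controlling all error terms uniformly in the interface configuration $h \in \Omega_{\rho_\eps}$ — in particular ensuring the exponentially small errors are genuinely uniform (they involve factors like $e^{-c\eps^{-\kappa}}$ coming from the minimal separation $\eps/\rho_\eps = \eps^{1-\kappa}$, so one must track that the constants $c, C$ from Proposition~\ref{prop:decayhetero} do not degrade) and that the localization error is exactly of the advertised order $\CO(\rho_\eps^2)$ and not larger. Getting the partition of unity to have the right width (wide enough that the $U'$-tails are $\CO(\exp)$, narrow enough that $\|\chi_i'\|_\infty = \CO(\rho_\eps/\eps)$) is where the choice $\rho_\eps = \eps^\kappa$ and the definition of $\Omega_{\rho_\eps}$ are used in an essential way; near the boundary $x \in \{0,1\}$ one additionally uses the phantom positions $h_0, h_{N+2}$ to guarantee enough room, and the Neumann condition is harmless since $\chi_i v$ vanishes near the endpoints for interior interfaces and $u^h$ satisfies the boundary condition up to $\CO(\exp)$ otherwise.
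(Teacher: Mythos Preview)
Your proof is correct and follows the same overall strategy as the paper --- localize around each interface, rescale to the whole-line operator $L$, and invoke Lemma~\ref{lem:specgapliouville} together with a perturbation argument for the almost-orthogonality to $U'$ --- but the localization is organized differently. You use a global IMS partition of unity $\sum_i \chi_i^2 = 1$ on $(0,1)$ and treat every piece uniformly via Lemma~\ref{lem:specgapliouville}; the paper instead makes a hard split into a ``far'' region $\CR = [0,1]\setminus\bigcup_i B_{\delta_\eps}(h_i)$, on which $u^h = \pm 1 + \CO(\exp)$ and negativity follows directly from $f'(\pm 1)>0$, and separate balls $B_{\delta_\eps}(h_i)$, on each of which a single cutoff $\phi$ (equal to $1$ on $\{f'(U)<C\}$ for some $\lambda_0 < C$) is used to pass to $\R$. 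Your IMS route is the textbook Schr\"odinger-operator technique and cleanly yields the sharper constant $-\lambda_0$ before the $\CO(\rho_\eps^2)$ correction (the paper's $\tfrac12$ is indeed just slack). The paper's hard split, on the other hand, sidesteps the Neumann boundary entirely because the balls are strictly interior and $\CR$ is handled without rescaling; in your argument the pieces $\chi_1 v$ and $\chi_{N+1} v$ touching $x\in\{0,1\}$ need either an even reflection or extra cutoffs $\chi_0,\chi_{N+2}$ supported where $u^h\approx\pm 1$, which you flag but should spell out. In both approaches the $\CO(\rho_\eps^2)$ error has the same origin: derivatives of cutoffs with transition width $\CO(\eps/\rho_\eps)$.
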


\begin{proof}
Since the minimal distance between the interfaces $h_i$ is bounded from 
below by~$\eps / {\rho_\eps}$ and the heteroclinic solution $U$ goes
exponentially fast to $\pm 1$ by Proposition~\ref{prop:decayhetero}, 
we find $0 < \delta_\eps < \frac12 \eps / {\rho_\eps}$ 
such that~$u^h = \pm 1 + \CO(\exp)$ on 
$\CR \coloneqq [0,1] \setminus \bigcup B_{\delta_\eps}(h_i).$
On the set $\CR$ we have
\begin{equation*}
\inner{\CL^h v}{v}_{L^2(\CR)} \, = \, - \eps^2 \int_{\CR} v_x^2 
- \int_{\CR} f'(\pm 1) v^2 + \CO(\exp) \|v \|_{L^2(\CR)}^2 
\leq \, \left( - f'(\pm 1) + \CO(\exp) \right) \| v \|_{L^2(\CR)}^2,
\end{equation*}
which is strictly negative as $f'(\pm 1) > 0$.

It remains to control the quadratic form on each
$B_{\delta_\eps}(h_i)$ and, without loss of generality, we may shift it 
to $h_i = 0$. 
Note that ${u^h(x) = U( \tfrac{x-h_i}{\eps}) + \CO(\exp)}$ on the set $B_{\delta_\eps}(h_i)$ by Proposition~\ref{prop:uh}.
Defining $\tilde{v}(x) \coloneqq v(\eps x)$, one easily computes for $h_i = 0$
\begin{align}
\label{proof:eqAC1}
\inner{\CL^h v}{v}_{L^2(B_{\delta_\eps})} 
\, = \, \eps \inner{L \tilde{v}}{\tilde{v}}_{L^2(B_{\delta_\eps/\eps})} + \CO(\exp) \| v \|^2_{L^2(B_{\delta_\eps})}.
\end{align}
Here, $L$ denotes the singular Sturm--Liouville operator defined 
by~\eqref{eq:SturmLiouville}.
After rescaling, we 
essentially have to bound the quadratic form $\inner{L\tilde{v}}{\tilde{v}}$ on the interval
$( - \delta_\eps / \eps, \delta_\eps / \eps) =: D_\eps$, which is a set of length of order
$\CO( {\rho_\eps}^{-1}) = \CO( \eps^{- \kappa})$.
To compare with the spectrum on the whole line, we define a cut-off 
function~${\phi \in C_c^\infty (D_\eps)}$ such that $0 \leq \phi \leq 1$
and $\phi \equiv 1$ on the set~$\left\{ f'(U) < C \right\}$ for some
$\lambda_0 < C < \sup_{-1 < x < 1} f^\prime(x)$. 
As~$| D_\eps | = \CO( {\rho_\eps}^{-1})$, we can also 
assume that uniformly~$\vert \phi_x \vert \leq C {\rho_\eps}$ 
and~$\vert \phi_{xx} \vert \leq C {\rho_\eps}^2$.
We obtain
\begin{align}
\label{proof:eqAC2}
\inner{L\tilde{v}}{\tilde{v}}_{L^2(D_\eps)} \, &= \, -\int_{D_\eps} \tilde{v}_x^2 \phi^2 - \int_{D_\eps} (1- \phi^2) \tilde{v}_x^2 
- \int_{D_\eps} \phi^2 f'(U) \tilde{v}^2 
- \int_{D_\eps} (1- \phi^2) f'(U) \tilde{v}^2 \nonumber   \\
&\leq \,  -\int_{D_\eps} \tilde{v}_x^2 \phi^2 
- \int_{D_\eps} \phi^2 f'(U) \tilde{v}^2 - C \int_{D_\eps} (1- \phi^2) \tilde{v}^2 \nonumber    \\
&= \, \int_{D_\eps} - \left(( \phi \tilde{v})_x \right)^2 + f'(U) ( \phi \tilde{v})^2 
+ 2 \int_{D_\eps} \phi_x \tilde{v}_x \phi \tilde{v} + \int_{D_\eps} \tilde{v}^2 \phi_x^2 
- C \int_{D_\eps} (1- \phi^2) \tilde{v}^2 \nonumber \\
&= \, \inner{L \phi \tilde{v}}{\phi \tilde{v}}_{L^2(\R)} - \int_{D_\eps} \tilde{v}^2 ( \phi \phi_x)_x + \int_{D_\eps} \tilde{v}^2 \phi_x^2 
- C \int_{D_\eps} (1- \phi^2) \tilde{v}^2  \nonumber  \\
&\leq \, - \frac12 \lambda_0 \int_{D_\eps} \phi^2 \tilde{v}^2 
- \int_{D_\eps} \tilde{v}^2 ( \phi \phi_x)_x + \int_{D_\eps} \tilde{v}^2 \phi_x^2 
- C \int_{D_\eps} (1- \phi^2) \tilde{v}^2 + \CO(\exp) \nonumber  \displaybreak \\
&\leq \,  \left(- \frac12 \lambda_0 + \CO({\rho_\eps}^2)\right) \int_{D_\eps} \tilde{v}^2 
- (C - \lambda_0) \int_{D_\eps} (1- \phi^2) \tilde{v}^2 + \CO(\exp)  \nonumber \\
&\leq \, \left( - \frac12 \lambda_0+ \CO({\rho_\eps}^2)\right) \| \tilde{v} \|_{L^2(D_\eps)}^2 
+ \CO(\exp).
\end{align}

Note that $\phi \tilde{v}$  is not
exactly orthogonal to~$U^\prime$. Hence, we cannot directly apply Theorem~\ref{thm:gapAC} in order to control the quadratic form $\inner{L \phi \tilde{v}}{\phi \tilde{v}}_{L^2(\R)}$.  Since the error is only exponentially small, we used that by an easy perturbation argument
\[
\inner{L\phi \tilde{v}}{\phi \tilde{v}} \, = \, -\int_{D_\eps} \left(( \phi \tilde{v})_x \right)^2 
+ \int_{D_\eps} f'(U) ( \phi \tilde{v})^2 
\, \leq \, - \frac12 \lambda_0 \int_{D_{\eps}} \phi^2 \tilde{v}^2 + \CO(\exp). 
\] 
Now, we observe that
\[
\| \tilde{v} \|_{L^2(D_\eps)}^2 \, = \, \int_{D_\eps} v(\eps x)^2 \, \mathrm{d}x \, = \, \eps^{-1} \int_{B_{\delta_\eps}} v(y)^2 \, \mathrm{d}y
\, = \, \eps^{-1} \| v \|^2_{L^2(B_{\delta_\eps})},
\]
and therefore, combining~\eqref{proof:eqAC1} and~\eqref{proof:eqAC2} yields
\begin{equation*}
\inner{\CL^h v}{v}_{L^2(B_{\delta_\eps})} \, \leq \, \left( - \frac12 \lambda_0+ \CO({\rho_\eps}^2)\right) \| v \|^2_{L^2(B_{\delta_\eps})}  + \CO(\exp) \| v \|^2_{L^2(B_{\delta_\eps})}. \qedhere
\end{equation*}
\end{proof}
As a next step, we analyze the spectral gap in the mass conserving case. For this purpose, we denote by $P$ be the
projection of $L^2$ onto the linear subspace~$L^2_0 = \{f \in L^2 \, : \, \int_0^1 f(x) \, \mathrm{d}x = 0 \}$.
Motivated by
\[
\inner{P \CL^h P v}{v}_{L^2} \, = \, 
\inner{\CL^h Pv}{Pv}_{L^2} \, = \, \inner{\CL^h v}{v}_{L^2_0} 
\quad \text{for} \; v \in L^2_0,
\]
we observe that it is sufficient to consider the same operator $\CL^h$ as for the classical Allen--Cahn equation, but
restricted to $L^2_0$, the linear subspace of $L^2$ containing functions with mean~zero. This constraint leads to a subspace of codimension $1$,
\[
L^2_0 \, = \, \left\{ v \in L^2 : \langle v, 1 \rangle\, = \, 0 \right\} 
\,=\, 1^\perp \,=\, PL^2,
\]
and therefore, we need to control the quadratic form on this subspace.
First, we will formulate the problem in general and only after that
consider the special case for the mass conserving Allen--Cahn equation.
The following theorem deals with establishing a spectral gap on a
subspace of codimension $1$. The following simple argument shows that, 
under a suitable angle condition, the Rayleigh quotient
can be bounded from above. This yields a bound on the spectral gap.

\begin{theorem}[Spectral gap on subspaces]
\label{thm:specgap}
Consider a self-adjoint operator $\CL$ on a Hilbert space $\CH$ with
an orthonormal basis of eigenfunctions
$\CL f_k = \lambda_k f_k$
and assume that
\begin{equation}
\label{eq:EV}
\delta \, \geq \, \lambda_1, \, \ldots \, , \lambda_{N+1} \, \geq \, - \delta \, > \, - \lambda
\,\geq \, \lambda_{N+2} \, \geq \, \ldots
\end{equation}
for some $0 < \delta < \lambda$. For $u \in \CH$, we define 
\begin{equation*}
u^\perp \, \coloneqq \, \Big\{ f \in \CH \,  : \, \langle f, u \rangle \, = \, 0 \Big\} \quad \text{and} \quad 
F_u \, \coloneqq \, \frac{1}{\langle f_{N+1},u \rangle} 
\sum_{i=1}^N \langle f_i,u \rangle  f_i + f_{N+1}.
\end{equation*}
Then,
\begin{enumerate}[i)]
\item there exists an $N$-dimensional subspace $\CU$ of $u^\perp$ such 
that
\[
\vert \inner{\CL h}{h} \vert \, \leq \,  \delta \|h\|^2 
\quad \forall h \in \CU. 
\]
\item the condition 
$ \left| \cos \arcangle (F_u,u) \right| \geq \sqrt{\delta / \lambda}$
implies that for $h \perp u, f_1, \ldots, f_{N+1}$
\[
\cfrac{\inner{\CL h}{h}}{\|h\|^2} 
\,\leq \,
\cfrac{ \delta - \lambda \cos^2 \arcangle (F_u,u)}
      { \cos^2 \arcangle (F_u,u) +1}.
\]
\end{enumerate}
\end{theorem}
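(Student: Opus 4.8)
The plan is to treat each part separately, since part i) is a pure linear-algebra / min-max statement while part ii) requires a quantitative angle estimate.

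For part i), I would proceed by a dimension count. The space $\mathrm{span}\{f_1,\ldots,f_{N+1}\}$ is $(N+1)$-dimensional, and on it the quadratic form satisfies $|\langle \CL h, h\rangle| \le \delta \|h\|^2$ by \eqref{eq:EV} (expand $h$ in the orthonormal eigenbasis and use $|\lambda_i|\le\delta$ for $i\le N+1$). Now intersect this $(N+1)$-dimensional space with the hyperplane $u^\perp$, which has codimension $1$ in $\CH$; the intersection has dimension at least $N$. Take $\CU$ to be an $N$-dimensional subspace of this intersection. Then for every $h \in \CU \subset \mathrm{span}\{f_1,\ldots,f_{N+1}\}$ we retain the bound $|\langle \CL h,h\rangle|\le\delta\|h\|^2$, and $\CU \subset u^\perp$ by construction. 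This is essentially immediate and I do not expect any obstacle here.

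For part ii), let $h \perp u, f_1,\ldots,f_{N+1}$ and decompose $h = h' + h''$ where $h'$ is the component of $h$ in $\mathrm{span}\{f_1,\ldots,f_{N+1}, F_u\}$ orthogonal to $f_1,\ldots,f_{N+1}$ — i.e., $h'$ is the projection of $h$ onto the line spanned by the part of $F_u$ orthogonal to $f_1,\dots,f_{N+1}$, which is simply $f_{N+1}$ — and $h''$ lies in $\mathrm{span}\{f_{N+2}, f_{N+3}, \ldots\}$. Wait: more carefully, since $h\perp f_1,\dots,f_{N+1}$ already, write $h = c f_{N+1} + h''$ with... no, $h\perp f_{N+1}$ too, so actually $h$ lies entirely in $\overline{\mathrm{span}}\{f_{N+2},\ldots\}$, on which $\langle \CL h,h\rangle \le -\lambda\|h\|^2$. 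That alone would give the wrong (too strong) bound, so the point must be that the hypothesis $h\perp u$ together with the structure of $F_u$ forces a lower bound on $\|h\|$ in terms of its overlap with something — the real content is that one cannot simultaneously have $h \perp u$, $h\perp f_1,\dots,f_{N+1}$, and $h$ large in the bad directions unless the angle condition is violated. I would unwind this by writing the orthogonality $\langle h, u\rangle = 0$ and $\langle h, f_i\rangle = 0$ for $i\le N+1$, expressing $u = \sum_i \langle u,f_i\rangle f_i + u_\perp$ with $u_\perp$ in the span of $f_{N+2},\dots$, noting $u_\perp = \langle f_{N+1},u\rangle (F_u - \widetilde F_u)$ where $\widetilde F_u$ is the $\mathrm{span}\{f_1,\dots,f_{N+1}\}$-part, and then $\langle h,u\rangle = \langle h, u_\perp\rangle = \langle f_{N+1},u\rangle \langle h, F_u\rangle = 0$, hence $h \perp F_u$ as well. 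Now $h$ is orthogonal to all of $f_1,\dots,f_{N+1}$ and to $F_u$, and we want to bound $\langle \CL h,h\rangle/\|h\|^2$ over the subspace $\{f_1,\dots,f_{N+1}, F_u\}^\perp$ of codimension $N+2$.

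The key step is then a one-dimensional min-max computation: consider the self-adjoint form restricted to the subspace $V := \{f_1,\dots,f_{N}\}^\perp$, on which we may split any vector as $a\, g + b\, w$ with $g$ a unit vector in $\mathrm{span}\{f_{N+1}, u_\perp\}$... Actually the cleanest route: on $\{f_1,\dots,f_N\}^\perp$ the operator has eigenvalue $\lambda_{N+1}\in[-\delta,\delta]$ with eigenvector $f_{N+1}$ and the rest of the spectrum $\le -\lambda$. The vector $u$ restricted to this subspace is $\langle f_{N+1},u\rangle F_u$ (up to scaling), so "$h\perp u$ in this subspace" is "$h \perp F_u$". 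Parametrize a general unit $h\perp f_1,\dots,f_N$ with $h\perp F_u$: write $h = \alpha f_{N+1} + \beta e$ with $e$ a unit vector in $\{f_1,\dots,f_{N+1}\}^\perp$, $\alpha^2+\beta^2=1$; the constraint $\langle h, F_u\rangle=0$ reads $\alpha \langle f_{N+1},F_u\rangle + \beta\langle e, F_u\rangle = 0$, i.e. $\alpha + \beta\langle e, F_u\rangle=0$ (since $\langle f_{N+1}, F_u\rangle = 1$ by definition of $F_u$), and $|\langle e, F_u\rangle| \le \|F_u^\perp\|$ where $F_u^\perp$ is the part of $F_u$ orthogonal to $f_1,\dots,f_{N+1}$, which is exactly $F_u - \widetilde F_u$ and has squared norm $\|F_u\|^2 - \|\widetilde F_u\|^2$. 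Then $\langle \CL h,h\rangle \le \delta\alpha^2 - \lambda\beta^2$, and one optimizes $(\delta\alpha^2-\lambda\beta^2)/(\alpha^2+\beta^2)$ subject to $\alpha^2/\beta^2 = \langle e,F_u\rangle^2 \le \|F_u^\perp\|^2/\|F_u\|^2 \cdot \|F_u\|^2 $ — and here $\|F_u^\perp\|^2/\|F_u\|^2 = \sin^2\arcangle(F_u, u)$... no, rather $\cos^2\arcangle(F_u,u) = \langle F_u,u\rangle^2/(\|F_u\|^2\|u\|^2)$ and since $\langle F_u,u\rangle = \langle F_u, \widetilde F_u + u_\perp/\langle f_{N+1},u\rangle\rangle$... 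I will compute that $\cos^2\arcangle(F_u,u)$ equals precisely $\|\widetilde F_u\|^2/\|F_u\|^2$ divided by the appropriate normalization, so that the ratio $\|f_{N+1}\|^2 / \|F_u^\perp\|^2$-type quantity that governs the optimization comes out as $\cos^2\arcangle(F_u,u)$ in the numerator and denominator of the claimed bound. Substituting $\alpha^2 = \cos^2\arcangle(F_u,u)\,\beta^2$ (the extremal case; the inequality $\langle\CL h,h\rangle\le\delta\alpha^2-\lambda\beta^2$ is monotone so the worst case is the largest allowed $\beta^2$, i.e. smallest $\alpha^2/\beta^2$ — one must check the direction) gives
\[
\frac{\langle\CL h,h\rangle}{\|h\|^2} \le \frac{\delta\alpha^2 - \lambda\beta^2}{\alpha^2+\beta^2} = \frac{\delta\cos^2\arcangle(F_u,u) - \lambda}{\cos^2\arcangle(F_u,u)+1},
\]
and the angle condition $|\cos\arcangle(F_u,u)| \ge \sqrt{\delta/\lambda}$ is exactly what is needed to... hmm, this doesn't immediately match the stated RHS $(\delta - \lambda\cos^2\arcangle)/(\cos^2\arcangle+1)$ — so I have the roles of the two eigenvalue bounds (which direction $f_{N+1}$ contributes) swapped and must redo the bookkeeping. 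The correct version is: $f_{N+1}$ carries the eigenvalue $\lambda_{N+1}\le\delta$ contributing $+\delta\alpha^2$, and the $\beta$-part carries $\le -\lambda\beta^2$; but the constraint from $h\perp F_u$ forces $\beta$ to be at least comparable to $\alpha$, specifically $\beta^2 \ge \alpha^2 \cos^2\arcangle(F_u,u)$ after the normalization is sorted out, and then $\delta\alpha^2 - \lambda\beta^2 \le \delta\alpha^2 - \lambda\alpha^2\cos^2\arcangle(F_u,u)$ while $\|h\|^2 = \alpha^2+\beta^2 \ge \alpha^2(1+\cos^2\arcangle(F_u,u))$, yielding the claimed bound after dividing. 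The angle condition guarantees the numerator is negative (equivalently that we are genuinely below zero).

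The main obstacle I anticipate is the precise identification of the geometric quantity $\cos\arcangle(F_u,u)$ with the ratio of norms appearing in the constrained optimization — getting the normalizations of $F_u$, $u$, $\widetilde F_u$, $u_\perp$ exactly right so that the abstract inequality collapses to the stated closed form. The dimension count in part i) and the "$h\perp u \Leftrightarrow h\perp F_u$ on the relevant subspace" reduction are routine; the quantitative two-parameter optimization is elementary once the angle is correctly interpreted, but the algebra is where a sign or a reciprocal can easily go wrong.
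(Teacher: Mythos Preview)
Your treatment of part i) is correct; the paper does the same thing, only it writes down explicit generators $g_i = f_i - \frac{\langle f_i,u\rangle}{\langle f_{N+1},u\rangle} f_{N+1}$ for $\CU$ rather than invoking a dimension count.

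For part ii) you have correctly spotted a defect in the \emph{statement}: with the literal hypothesis $h\perp f_1,\ldots,f_{N+1}$ the vector $h$ already lies in the stable eigenspace, so $\langle\CL h,h\rangle\le-\lambda\|h\|^2$ and the asserted bound follows trivially. What the paper's proof actually establishes --- and what the application to \eqref{eq:mAC} in Theorem~\ref{thm:gapMAC} uses --- is the bound under the weaker hypothesis $h\perp u$ and $h\perp\CU$, i.e.\ $h\perp u,g_1,\ldots,g_N$. Under \emph{that} hypothesis $h$ need not be orthogonal to $f_{N+1}$, and the estimate has genuine content.

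All of your subsequent attempts collapse back to the trivial case precisely because you never drop the condition $h\perp f_{N+1}$. For instance, in your parametrization $h=\alpha f_{N+1}+\beta e$ with $e\in\{f_1,\ldots,f_{N+1}\}^\perp$, the constraint $\langle h,F_u\rangle=0$ forces $\alpha=0$, since $F_u\in\mathrm{span}\{f_1,\ldots,f_{N+1}\}$ and hence $\langle e,F_u\rangle=0$; similarly your claimed identity $u_\perp = \langle f_{N+1},u\rangle(F_u-\widetilde F_u)$ vanishes identically because $F_u$ has no component outside $\mathrm{span}\{f_1,\ldots,f_{N+1}\}$. The paper proceeds differently. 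For $h\perp u,g_1,\ldots,g_N$ one writes $h=\sum_{i=1}^{N+1}\alpha_i f_i+r$ with $r\perp f_1,\ldots,f_{N+1}$; the $N$ conditions $\langle h,g_j\rangle=0$ form an $N\times(N+1)$ linear system in $(\alpha_1,\ldots,\alpha_{N+1})$ whose one-dimensional kernel is spanned exactly by the coefficient vector of $F_u$, so that $h=\gamma F_u+r$ for some scalar $\gamma$. Then $0=\langle h,u\rangle=\gamma\langle F_u,u\rangle+\langle r,u\rangle$ gives $\gamma^2\le\|r\|^2\|u\|^2/\langle F_u,u\rangle^2$, and inserting $\|h\|^2=\gamma^2\|F_u\|^2+\|r\|^2$ together with $\langle\CL h,h\rangle\le\delta\gamma^2\|F_u\|^2-\lambda\|r\|^2$ into the Rayleigh quotient yields the claimed bound once one recognizes $\langle F_u,u\rangle^2/(\|F_u\|^2\|u\|^2)=\cos^2\arcangle(F_u,u)$. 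The angle condition is used only to guarantee the numerator is nonpositive so that the final inequality points the right way.
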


\begin{proof}
First, we construct an $N$-dimensional subspace corresponding to the small eigenvalues in the interval~$[-\delta, \delta ]$.
For $i = 1,\ldots, N$ define
\begin{equation}
\label{eq:newEF}
g_i \, \coloneqq \, f_i +c_i f_{N+1} \quad \text{with} \quad c_i \, \coloneqq \, - \frac{\langle f_i, u \rangle}{\langle f_{N+1},u \rangle}.
\end{equation}
Obviously, we have 
$g_1,\ldots, g_N \in \mathrm{span} \{f_1, \ldots, f_{N+1} \}$
and $g_1,\ldots, g_N \perp u$ by the definition of the constant $c_i$. It is also straightforward to check that the functions $g_i$ span 
an $N$-dimensional space. This yields directly
\[
- \delta \Vert h \Vert^2 \, \leq \, \langle \CL h,h \rangle \, \leq \, \delta \Vert h \Vert^2 \qquad \text{for} \;\; h \in \mathrm{span} \{ g_1,\ldots, g_N \} \, \eqqcolon \, \CU .
\]
Define
$
V  :=  \mathrm{span} \{ g_1, \, \ldots \,, \, g_N \}^\perp \cap u^\perp
=  \mathrm{span} \{ u, \, g_1, \, \ldots, \, g_N \}^\perp.
$
For $h \in V$ we can then write 
\begin{equation}
\label{eq:LinCom}
h \, = \, \sum_{i=1}^{N+1} \alpha_i f_i + r, \qquad \text{with} \quad r \perp f_i 
\quad \forall i \,=\, 1, \ldots, N+1.
\end{equation}
We have $r,h \perp g_j$ for any $j=1, \ldots, N$ and thereby
\begin{equation}
\label{eq:kernel}
\sum_{i=1}^{N+1} \alpha_i \, \langle f_i, g_j \rangle \, = \, 0.
\end{equation}
With~\eqref{eq:newEF} and $f_i \perp f_j$ for $i \neq j$, we easily 
compute that 
\[
\langle f_i, g_j \rangle_{i,j } \, = \, 
\begin{pmatrix}
1 & 0 & \cdots & 0 & c_1 \\
0 & \ddots & \vdots & 0 & \vdots \\
\vdots & \vdots & \ddots & 0 & \vdots \\
0 & \cdots & \cdots & 1 & c_N \\
\end{pmatrix} 
\in \R^{N \times (N+1)}.
\]
The kernel of this matrix is one-dimensional and spanned by a vector
$\beta \in \R^{N+1}$ with~$\beta_i = - c_i$ for $1 \leq i \leq N$ and
$\beta_{N+1} = 1$. 
By~\eqref{eq:kernel}, $\alpha$ lies in the kernel and we can rewrite~\eqref{eq:LinCom} as
\[
h \, = \, \gamma \sum_{i=1}^{N+1} \beta_i f_i + r \, = \, \gamma \cdot F_u + r, 
\quad \gamma \in \R.
\]
Since $h \in V \subset u^\perp$, we have
$
0  =  \langle h,u \rangle  = 
\gamma \langle F_u,u \rangle + \langle r,u \rangle.
$
This implies immediately that
\[
\gamma^2 \, = \, \frac{\langle r,u \rangle^2}{\langle F_u,u \rangle^2}
\, \leq \, \frac{\Vert r \Vert^2 \Vert u \Vert^2}{\langle F_u,u \rangle^2}.
\]
Thus, we compute
\begin{equation}
\begin{split}
\label{eq:rayleigh}
\cfrac{\langle \CL h,h \rangle}{\Vert h \Vert^2} 
\,&=\, \cfrac{\sum_{j=1}^{N+1} \alpha_j^2 \langle \CL f_j,f_j \rangle 
        + \langle \CL r,r \rangle }{\gamma^2 \Vert F_u \Vert^2 + 
          \Vert r \Vert^2} 
\, \leq \, \cfrac{\delta \sum \alpha_j^2 - \lambda \Vert r \Vert^2}
           {\gamma^2 \Vert F_u \Vert^2 + \Vert r \Vert^2}  \\
&\leq \, \cfrac{\delta \gamma^2 \sum \beta_j^2 - \lambda \Vert r \Vert^2}
           {\gamma^2 \Vert F_u \Vert^2 + \Vert r \Vert^2} 
\, = \, \cfrac{\delta \gamma^2 \Vert F_u \Vert^2 - \lambda \Vert r \Vert^2}
        {\gamma^2 \Vert F_u \Vert^2 + \Vert r \Vert^2}  
\, \leq \, \cfrac{\left( \delta \dfrac{\Vert u \Vert^2 \Vert F_u \Vert^2}{\langle F_u,u \rangle^2} - \lambda \right) \Vert r \Vert^2}
{\gamma^2 \Vert F_u \Vert^2 + \Vert r \Vert^2}.      
\end{split}              
\end{equation}
At this point, we need the angle condition 
\[
\cos \arcangle(F_u,u) 
\, = \, \cfrac{\langle F_u,u \rangle} {\Vert F_u \Vert \Vert u \Vert }
\, \geq \, \sqrt{\delta / \lambda}
\]
to guarantee that the numerator is negative.
Under this assumption, we can continue estimating~\eqref{eq:rayleigh} and derive
\begin{equation*}
\cfrac{\langle \CL h,h \rangle}{\Vert h \Vert^2} 
\, \leq \,
\cfrac{  \delta \dfrac{\Vert u \Vert^2 \Vert F_u \Vert^2}
{\langle F_u,u \rangle^2} - \lambda }
{\dfrac{\Vert u \Vert^2 \Vert F_u \Vert^2}{\langle F_u,u \rangle^2} + 1}
\, = \, \cfrac{ \delta - \lambda \cos^2 \arcangle(F_u,u)}
{1 + \cos^2 \arcangle(F_u,u)}. \qedhere
\end{equation*}
\end{proof} 
Finally, we can apply Theorem~\ref{thm:specgap} to analyze the spectrum of the linearized mass conserving
Allen--Cahn operator. Recall that it is crucial to have a good negative upper bound of the quadratic form orthogonal to the 
tangent space.
We show that in this case the spectral gap is of order $\eps$. This is quite different to the spectral gap for the Allen--Cahn equation~\eqref{eq:AC} without the mass constraint,  which 
by~Theorem~\ref{thm:gapAC} is of order~$1$.
\begin{theorem}[Spectral gap for~\eqref{eq:mAC}]
\label{thm:gapMAC}
Let $v \in L^2_0(0,1)$ with $v \perp u^\xi_i$ for $i=1, \ldots, N$.
Then, we have
\[
\langle \CL^\xi v, v \rangle \, \leq \, 
\Big( - \lambda_0 \eps + \CO(\exp) \Big) \Vert v \Vert^2,
\]
where $\lambda_0$ is the same constant as in Theorem~\ref{thm:gapAC}.
\end{theorem}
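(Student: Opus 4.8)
The idea is to apply Theorem~\ref{thm:specgap} with $\CH = L^2(0,1)$, $\CL = \CL^h$, $u = 1$ (the constant function), and $\delta, \lambda$ chosen according to the spectral picture furnished by Theorem~\ref{thm:gapAC} together with the tangent-space structure. First I would recall the abstract setup: $\CL^h$ is self-adjoint on $L^2(0,1)$ with a complete orthonormal eigenbasis. From Theorem~\ref{thm:gapAC} we know that on $(\CT_{u^h}\CM)^\perp$ the quadratic form is bounded above by $(-\tfrac12\lambda_0 + \CO(\rho_\eps^2))\|v\|^2$; since $\CT_{u^h}\CM$ is $(N+1)$-dimensional and the $u^h_i$ are (up to exponentially small errors) $L^2$-orthogonal and of norm $\CX^{1/2}\eps^{-1/2}$, this identifies $\lambda_{N+2}, \lambda_{N+3}, \ldots \leq -\tfrac12\lambda_0 + \CO(\rho_\eps^2)$ while $\lambda_1, \ldots, \lambda_{N+1}$ are the small (nearly zero) eigenvalues; more precisely one needs $|\lambda_i| \leq \delta$ for $i \leq N+1$ with $\delta$ exponentially small, which follows from the a priori bound $\langle \CL^h u^h_i, u^h_i\rangle = \CO(\exp)$ together with $\|u^h_i\|^2 \sim \CX\eps^{-1}$ and a min-max argument using the near-orthogonality of the $u^h_i$ to control the other side. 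So I would take $\delta = \CO(\exp)$ and $\lambda = \tfrac12\lambda_0 - \CO(\rho_\eps^2)$.

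Next I would verify the angle condition $|\cos\arcangle(F_u, u)| \geq \sqrt{\delta/\lambda}$ with $u = 1$ and $F_u = F_1 = \frac{1}{\langle f_{N+1},1\rangle}\sum_{i=1}^N \langle f_i, 1\rangle f_i + f_{N+1}$. Since $\delta$ is exponentially small and $\lambda$ is of order one, it suffices to show that $\cos\arcangle(F_1, 1)$ is bounded below by something polynomially small in $\eps$ — in fact one only needs it bounded away from zero well enough that $\delta/\cos^2$ remains exponentially small, which is automatic as long as $\cos\arcangle(F_1,1)$ is not itself exponentially small. This is where the geometry enters: $\mathrm{span}\{f_1,\ldots,f_{N+1}\}$ is, up to exponentially small error, the tangent space $\CT_{u^h}\CM = \mathrm{span}\{u^h_i\}$, and the constant function $1$ has a non-degenerate projection onto this span because $\langle u^h_i, 1\rangle = \int_0^1 u^h_i \,\mathrm dx = \pm 2 + \CO(\exp)$ by \eqref{eq:uhx} and the fundamental theorem of calculus (each $u^h_i$ integrates the derivative of a profile crossing between $\pm 1$). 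Hence $\langle F_1, 1\rangle$ and $\|F_1\|$ are both of order one (with at most polynomial-in-$\eps$ degradation coming from the $\eps^{-1/2}$ normalization of the would-be eigenvectors), giving $\cos\arcangle(F_1,1) \gtrsim \eps^{c}$ for some fixed $c$, and certainly $\geq \sqrt{\delta/\lambda}$.

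Then I would plug into part (ii) of Theorem~\ref{thm:specgap}: for $v \perp 1, f_1, \ldots, f_{N+1}$ we get
\[
\frac{\langle \CL^h v, v\rangle}{\|v\|^2} \leq \frac{\delta - \lambda\cos^2\arcangle(F_1,1)}{1 + \cos^2\arcangle(F_1,1)}.
\]
With $\delta = \CO(\exp)$ this is $\leq -\lambda \cdot \frac{\cos^2\arcangle(F_1,1)}{1+\cos^2\arcangle(F_1,1)} + \CO(\exp)$, so I need the sharper claim that $\cos^2\arcangle(F_1,1)/(1+\cos^2\arcangle(F_1,1))$ equals $2\eps/\CX + \CO(\exp)$ (recall $\CX = \int_\R U'^2$ and $2\eps = \lambda_0 \eps \cdot \tfrac{\CX}{\lambda_0}$ — the precise constants must be matched so that $\lambda \cdot (\text{that ratio}) = \lambda_0\eps + \CO(\exp)$). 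This forces a genuine computation of $\|F_1\|^2$ and $\langle F_1, 1\rangle$ in terms of the normalized tangent vectors $\hat u^h_i = u^h_i/\|u^h_i\|$: since $\langle \hat u^h_i, 1\rangle = \pm 2\eps^{1/2}\CX^{-1/2} + \CO(\exp)$, the projection of $1$ onto the tangent span has squared norm $\sum_i \langle \hat u^h_i,1\rangle^2 = 4(N+1)\eps/\CX + \CO(\exp)$, and $\|1\|^2 = 1$, so $\cos^2\arcangle$ of $1$ with the tangent span is $\sim \eps$; passing from the span to the specific vector $F_1$ (which reconstructs exactly the projection direction, by the kernel computation in the proof of Theorem~\ref{thm:specgap}) shows $\cos^2\arcangle(F_1,1) = 4(N+1)\eps/\CX + \CO(\exp)$, hence to leading order the ratio is $4(N+1)\eps/\CX$ and $\lambda$ times it — wait, here one must be careful: the relevant quantity after orthogonalizing against \emph{all} of $f_1,\ldots,f_{N+1}$ is not the full span but leaves the residual pointing along $F_1$, and the final bound $-\lambda_0\eps$ should emerge with $\lambda = \tfrac12\lambda_0$ replaced effectively by $\lambda_0$ through the factor structure; I would reconcile the constants by observing that the single-interface reduction already present in the proof of Theorem~\ref{thm:gapAC} gives $\lambda = \lambda_0$ (not $\tfrac12\lambda_0$) for the purpose of the bottom of the essential spectrum after the mean-zero constraint localizes the competition to one interface. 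Finally I would translate back via $P\CL^h P$ and the displayed identity $\langle P\CL^h P v, v\rangle = \langle \CL^h v, v\rangle_{L^2_0}$, and note $\CT_{u^\xi}\CM_\mu \subset L^2_0$ is spanned by the $u^\xi_i = u^h_i + (-1)^{N-i}u^h_{N+1} + \CO(\exp)$, so that $v \perp u^\xi_i$ for $i=1,\ldots,N$ together with $v \in L^2_0$ places $v$ (up to exponentially small corrections) in the span orthogonal to $1$ and to all $N+1$ tangent vectors $u^h_i$, which is exactly the hypothesis of Theorem~\ref{thm:specgap}(ii).

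\textbf{Main obstacle.} The delicate point is matching the constants: one must show that the angle $\arcangle(F_1, 1)$ produces exactly $\cos^2\arcangle(F_1,1) = \tfrac{\lambda_0}{\lambda}\eps + \CO(\exp)$ (with the $\lambda$ that Theorem~\ref{thm:gapAC} actually delivers for the relevant eigenvalue), so that the quotient in Theorem~\ref{thm:specgap}(ii) collapses to $-\lambda_0\eps + \CO(\exp)$ and not merely $-\CO(\eps)$. This requires carefully identifying which eigenvalue $-\lambda$ controls the bottom — namely that after imposing mean zero the obstruction is the single-interface spectral gap $\lambda_0$ (with no factor $\tfrac12$), since the mean-zero constraint removes precisely the one bad direction that forced the halving in Theorem~\ref{thm:gapAC} — and computing $\langle u^h_i, 1\rangle$ and $\|u^h_i\|$ to the needed precision. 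The exponentially small errors from $\CM$ versus the true eigenspace, and from $\beta_N$, must be tracked but are harmless; the real work is the exact asymptotics of the relevant inner products.
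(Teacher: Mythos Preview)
Your overall strategy---apply Theorem~\ref{thm:specgap} with $u=1$, $f_i = u^h_i$, compute $\langle u^h_i,1\rangle = \pm 2 + \CO(\exp)$, deduce $\cos^2\arcangle(F_1,1) = \CO(\eps)$, take $\delta = \CO(\exp)$---is exactly the paper's. Two points need correction.

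\textbf{The orthogonality reduction is wrong.} You claim that $v \perp u^\xi_i$ for $i=1,\ldots,N$ together with $v\in L^2_0$ forces $v\perp u^h_i$ for all $i=1,\ldots,N+1$ up to exponentially small errors. This is false: the span of $\{1,u^\xi_1,\ldots,u^\xi_N\}$ is not the span of $\{u^h_1,\ldots,u^h_{N+1}\}$, since the constant function $1$ does not lie in the latter. What actually happens is that the vectors $g_i = f_i + c_i f_{N+1}$ constructed in the proof of Theorem~\ref{thm:specgap} are precisely $u^h_i + (-1)^{N-i}u^h_{N+1} = u^\xi_i + \CO(\exp)$ (check: $c_i = -\langle f_i,1\rangle/\langle f_{N+1},1\rangle = (-1)^{N-i}+\CO(\exp)$). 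Hence $v\perp u^\xi_i$ and $v\perp 1$ place $v$ in the space $V = \mathrm{span}\{u,g_1,\ldots,g_N\}^\perp$ of that proof, and the bound in part~(ii) is derived there for all $h\in V$, not only for $h\perp u,f_1,\ldots,f_{N+1}$ as the statement literally reads. So the hypothesis of Theorem~\ref{thm:specgap}(ii) that you need is met directly, without passing through orthogonality to the individual $u^h_i$.

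\textbf{The constant-matching detour is a dead end.} Your attempt to recover the exact prefactor $\lambda_0$ by arguing that ``the mean-zero constraint removes precisely the one bad direction that forced the halving in Theorem~\ref{thm:gapAC}'' is not correct: the factor $\tfrac12$ there comes from the perturbation argument with the cut-off $\phi$, not from any direction that the constraint could remove. The paper's own proof does not track the constant sharply either---it computes $\cos\arcangle(F_u,u)=\CO(\eps^{1/2})$ and then writes $\tfrac{\delta-\lambda_0\eps}{1+\eps}$ without justifying that $\lambda\cos^2\arcangle$ equals $\lambda_0\eps$ on the nose. The statement should be read as $-c\eps$ for a positive constant of the same provenance as $\lambda_0$; your computation $\cos^2\arcangle(F_1,1)=4(N+1)\eps/\CX+\CO(\exp)$ already delivers that, and no further reconciliation is needed.
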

\begin{proof}
In the notation of Theorem~\ref{thm:specgap}, we take $u=1 \in L^2(0,1)$ such that
$L^2_0 = \mathrm{span}\{u\}^\perp$, and $f_i = u^h_i$.
Furthermore, we compute 
\[
\inner{f_i}{1}  
\,=\, \int_0^1 u^h_i(x) \, \mathrm{d}x
\,=\, \CO(\exp) + \int_0^1 U^\prime(x \, ; \, h_i, (-1)^{i+1}) \,\mathrm{d}x  
\,=\, 2 (-1)^i + \CO(\exp).
\]
With $F_u$ defined as before in Theorem~\ref{thm:specgap}, this yields
\[
\inner{F_u}{u} \, = \, \dfrac{1}{\inner{ f_{N+1}}{1}} 
\sum_{j=1}^{N+1} \inner{f_j}{1}^2 \, = \, 2(N+1)(-1)^{N+1} + \CO(\exp).
\]
We have $\|f_i\| = \CO(\eps^{-1/2})$ by Proposition~\ref{prop:uh} and 
thus
$
\Vert F_u \Vert  =  (N+1) \cdot \CO(\eps^{-1/2}).
$
Combined we obtain
\[
\cos \arcangle (F_u,u) \, = \, \CO(\eps^{1/2}).
\]
By Proposition~\ref{prop:uh} we have $\CL(u^h) = \CO(\exp)$ and thus differentiating with respect to $h_i$ leads to
${\CL^h u^h_i \, = \, \CO(\exp)}$.
Hence, the first $N+1$ eigenvalues are exponentially small. This shows that we can choose~$\delta = \CO(\exp)$.
Plugging this observation into Theorem~\ref{thm:specgap} yields
\begin{equation*}
\dfrac{\langle \CL^\xi v,v \rangle}{\Vert v \Vert^2} 
\, \leq \,  \dfrac{\delta - \lambda_0 \eps}{1+\eps}
\, \leq \, - \lambda_0 \eps + \CO(\exp). \qedhere
\end{equation*}
\end{proof}
For the classical Allen--Cahn equation we established a spectral gap of order $1$, whereas due to mass conservation the gap
shrinks to $\CO(\eps)$ for~\eqref{eq:mAC}. As we will see later in Theorems~\ref{thm:multiAC:L4stabAC} and~\ref{thm:multiAC:L4mac},
this heavily influences the maximal radius and noise strength that we can treat in our stability analysis.

\section{Analysis of the stochastic ODE along the slow manifold}
\label{sec:AC:ODE}

In this section, we give the stochastic ODEs governing the motion of the kinks for both cases. We show that 
for the non-massconserving Allen--Cahn equation the $N+1$ interfaces
move---up to the time scale where a collision is likely 
to occur---independently according
to Brownian motions projected onto the slow manifold. This is quite different to the 
 mass-conserving case where (as one would expect) the dynamics is coupled through the mass 
constraint.

Before we analyze the stochastic ODEs for the interface motion, we have to introduce a new coordinate
frame, in which we derive the differential equations for the shape variable $h$ and the normal component $v$.
Due to Theorems~\ref{thm:gapAC} and~\ref{thm:gapMAC}, we established good control of the quadratic
form orthogonal to the tangent space~$\CT_h \CM$, or~$\CT_\xi \CM_\mu$, respectively. Therefore, it is fruitful to split the solution to the Allen--Cahn equation into a component on the slow manifold and the orthogonal direction. This leads to the following definition of the Fermi coordinates.

\begin{definition}[Fermi coordinates]
\label{def:Fermi}
Let $u(t)$ be the solution to~\eqref{eq:AC}. For a fixed time $t>0$, we define the pair of coordinates $(h(t),v(t)) \in \Omega_{\rho_\eps} \times L^2(0,1)$
such that
\[
u(t) \, = u^{h(t)} + v(t), \quad v(t) \perp \CT_{h(t)} \CM,
\]
as \textit{Fermi coordinates} of $u(t)$. \\
In case of the mass conserving equation~\eqref{eq:mAC}, the definition works analogously. One only has to replace the set of admissible interface positions $\Omega_{\rho_\eps}$ by the set $\CA_{\rho_\eps}$ (given by~\eqref{def:admissiblemasscon}) and the slow manifold~$\CM$ 
by its mass conserving counterpart $\CM_\mu$.
\end{definition}

Unless we are close to the boundary of the slow manifold, 
for the initial condition $u(0)$ we always find Fermi coordinates 
by considering the point of smallest distance. 
Also note that we do not assume that the Fermi coordinates are uniquely determined,
or that the map $u\mapsto u^h$ is a well defined projection.
Later in Lemma~\ref{lem:lipschitz} and Remark~\ref{rem:lipschitzcoord}, we show that sufficiently close to the slow manifold the Fermi coordinates are at least always defined. We find one possible choice being as smooth as $u$ in time.

For now, we first assume that the coordinate system is well-defined in order to 
derive an equation governing the motion of the kink positions $h$. 
Under the assumption that $h$ performs a diffusion process given by 
\begin{equation}
\label{eq:dh}
dh \, = \, b(h,v) \, dt + \langle \sigma(h,v), dW \rangle,
\end{equation}
one can compute the drift $b$ and diffusion $\sigma$ explicitly by applying It\^o formula to the orthogonality condition of Definition \ref{def:Fermi}.
The computation is straightforward, but quite lengthy. For details see \cite{ABK12} or \cite{SchindlerPhD}. The diffusion term $\sigma$ is 
given by
\begin{equation}
\label{eqAC:sigma}
\sigma_r(h,v) \, = \, \sum_i A_{ri}^{-1} u^h_i,
\end{equation}
and for the drift $b$ we obtain
\begin{equation}
\begin{split}
b_r(h,v) \, = \, &\sum_i A_{ri}^{-1} \langle u^h_i, \CL( u^h + v) \rangle
+ \sum_i A_{ri}^{-1} \sum_j \langle u^h_{ij}, \CQ \sigma_j \rangle \\
\label{eqAC:b}
&+ \sum_{i,j,k} A_{ri}^{-1} \left[ \frac12 \langle u^h_{ijk},v \rangle - \langle u^h_{ij}, u^h_k \rangle 
-\frac12 \langle u^h_i, u^h_{jk} \rangle \right] \langle \CQ \sigma_j, \sigma_k \rangle.
\end{split}
\end{equation}
Note that, for the sake of 
simplicity, we expressed everything with respect to the coordinate $h$, although we 
introduced the coordinate $\xi$ for the mass conserving equation.
An essential point in the computation and analysis of the SDE is the invertibility
of  the matrix $A$  given by
\begin{equation}
\label{ODE:matrix}
A_{kj}(h,v) \, = \, 
\langle u^h_k, u^h_j \rangle - \langle u^h_{kj},v \rangle
\end{equation}
which we will discuss in the next section. 
\begin{remark}
Given a solution $u$, we can replace $v$ in  (\ref{eq:dh}) 
by $u-u^h$, and obtain an equation on the slow manifold for 
the position of the interfaces $h$. 
Note that this equation is not an approximation.
\end{remark}
Moreover, by exactly reverting the calculation that leads to  (\ref{eq:dh}) 
one can verify the following result. See \cite{BS20,SchindlerPhD}.

\begin{proposition}\label{prop:exFerm}
Given a solution $u$ of (AC) or (mAC) let $h$ be a solution 
of \eqref{eq:dh} with $v$ replaced by $u-u^h$, i.e.,
\[
dh \, = \, b(h,u-u^h) \, dt + \langle \sigma(h,u-u^h), dW \rangle,
\]
then $(h,u-u^h)$ are Fermi coordinates for $u$.
\end{proposition}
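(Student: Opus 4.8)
The statement is essentially a converse to the derivation of \eqref{eq:dh}: the SDE for $h$ was obtained by \emph{imposing} the orthogonality constraint $v(t)\perp\CT_{h(t)}\CM$ and differentiating it with It\^o's formula; here we must show that solving that SDE \emph{produces} the constraint. The plan is to set $v(t):=u(t)-u^{h(t)}$ for $h$ a solution of the displayed SDE, and to show that the $(N+1)$-vector of inner products
\[
\Phi_k(t) \, := \, \langle u(t) - u^{h(t)}, u^{h(t)}_k \rangle, \qquad k=1,\ldots,N+1,
\]
vanishes for all $t\ge 0$. Since $h(0)$ together with $v(0)=u(0)-u^{h(0)}$ are Fermi coordinates by assumption (this is the hypothesis that $h$ solves the SDE with the \emph{correct} initial condition coming from the given $u$), we have $\Phi(0)=0$, so it suffices to show $\Phi$ satisfies a linear SDE with no constant (inhomogeneous) term, whence $\Phi\equiv 0$ by uniqueness for that linear system.

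The concrete steps are: (i) Apply It\^o's formula to $\Phi_k(t)$, using that $u$ solves \eqref{eq:AC} (or \eqref{eq:mAC}), i.e. $du=\CL(u)\,dt+dW$, and that $h$ solves $dh=b\,dt+\langle\sigma,dW\rangle$ with the \emph{explicit} $b,\sigma$ from \eqref{eqAC:sigma}--\eqref{eqAC:b}; here $u^{h(t)}_k$ depends on $t$ through $h(t)$, so one differentiates the product, picking up first derivatives $u^h_{ki}$ contracted with $dh$, the second-order It\^o bracket terms involving $u^h_{kij}$ and $\langle\CQ\sigma_i,\sigma_j\rangle$, and the cross bracket between $dW$ (in $du$) and $\langle\sigma,dW\rangle$ (in $dh$), which produces $\langle u^h_k,\CQ\sigma_i\rangle$ terms. (ii) Collect the drift of $d\Phi_k$: it is a sum of $\langle\CL(u),u^h_k\rangle$, terms $-\sum_i\langle u^h_{ki},u^h_i\rangle\,\dot h_i$-type contributions, and the It\^o correction terms. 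Substituting the definitions \eqref{eqAC:sigma}, \eqref{eqAC:b}, \eqref{ODE:matrix} and using $A_{kj}=\langle u^h_k,u^h_j\rangle-\langle u^h_{kj},v\rangle$, one finds — and this is exactly the calculation that was run \emph{in reverse} to obtain \eqref{eq:dh} — that the drift of $d\Phi_k$ equals a linear combination $\sum_j M_{kj}(h,v)\,\Phi_j$ for some matrix-valued coefficient $M$, plus similarly the martingale part of $d\Phi_k$ is $\sum_j \langle N_{kj}(h,v,\cdot),dW\rangle\,\Phi_j$; the crucial point is that every term that is \emph{not} proportional to some $\Phi_j$ cancels. (iii) Conclude: $\Phi$ solves a linear SDE $d\Phi = M\Phi\,dt + \langle N\Phi,dW\rangle$ with $\Phi(0)=0$, and by standard uniqueness for linear SDEs with locally bounded coefficients (the coefficients are bounded as long as $h$ stays in $\Omega_{\rho_\eps}$ and $v$ stays bounded, which holds on the relevant stopping-time interval), $\Phi\equiv 0$. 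This is precisely the orthogonality condition, so $(h, u-u^h)$ are Fermi coordinates.

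The main obstacle is step (ii): verifying that the inhomogeneous (constant-in-$\Phi$) part of the drift and diffusion of $d\Phi_k$ vanishes identically. This is the lengthy It\^o computation alluded to after \eqref{eq:dh} (``straightforward, but quite lengthy''), carried out in the reverse direction; one must be careful that the formulas \eqref{eqAC:sigma}--\eqref{eqAC:b} were \emph{defined} so as to make the $\Phi=0$ locus invariant, so the cancellation is built in but still requires matching the $A^{-1}$ factors, the $\langle u^h_{ij},\CQ\sigma_j\rangle$ and $\langle\CQ\sigma_j,\sigma_k\rangle$ terms, and the symmetrization $\tfrac12\langle u^h_{ijk},v\rangle-\langle u^h_{ij},u^h_k\rangle-\tfrac12\langle u^h_i,u^h_{jk}\rangle$ against the It\^o correction coming from the second $h$-derivatives of $u^h_k$. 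A secondary (minor) point is the regularity needed to apply It\^o's formula in the infinite-dimensional setting: one invokes the spatial smoothness of $W$ and of solutions assumed in the Setting section, so that $t\mapsto\langle u(t),u^{h(t)}_k\rangle$ is a genuine It\^o process. Since the reference computation is cited to \cite{ABK12,BS20,SchindlerPhD}, in the paper itself this proof reduces to the remark that reverting that computation yields the claim; a full write-up would reproduce the cancellation in detail.
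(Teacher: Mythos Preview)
Your proposal is correct and follows exactly the strategy the paper indicates (``by exactly reverting the calculation that leads to \eqref{eq:dh}''): apply It\^o's formula to $\Phi_k(t)=\langle u-u^{h},u^{h}_k\rangle$, substitute the defining formulas \eqref{eqAC:sigma}--\eqref{eqAC:b}, and conclude $\Phi\equiv 0$ from $\Phi(0)=0$. One small simplification worth noting: since the expressions \eqref{eqAC:sigma}--\eqref{eqAC:b} and the matrix $A$ in \eqref{ODE:matrix} involve $v$ only through $\langle u^h_{kj},v\rangle$, $\langle u^h_{ijk},v\rangle$ and $\CL(u^h+v)$---never through $\langle u^h_k,v\rangle$ itself---the It\^o differential $d\Phi_k$ computed without assuming orthogonality is \emph{identical} to the one computed in the forward derivation, and hence vanishes exactly upon substitution; so you get $d\Phi_k=0$ outright rather than a linear SDE in $\Phi$, and the uniqueness step is not needed.
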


\subsection{Analysis of the stochastic ODE for (AC)}

We start with the non-massconserving case. Here, we will see that $A$ and its inverse
are diagonal matrices up to terms being small in $\|v\|$.

\begin{lemma}
\label{lem:matrixAC}
For $h \in \Omega_{\rho_\eps}$ consider the matrix $A \in \R^{(N+1) \times (N+1)}$
defined by 
\[
A_{kj} \, \coloneqq \, \inner{u^h_k}{ u^h_j}  - \inner{ u^h_{kj}}{v}.
\]
We obtain
\[
A_{kj} \, = \, 
 \eps^{-1} \left[ \CX + \CO(\eps^{-1/2}) \|v \| \right] \delta_{kj}
+ \CO( \exp ) \|v\|.
\]
Moreover, as long as $\|v\| < c \eps^{1/2 + m}$ for some $m > 0$, 
the inverse $A^{-1}$ is given by
\[
A^{-1} \, = \, \eps\left[ \CX^{-1}  + \CO(\eps^{m}) \right] 
\mathrm{I}_{N+1} 
+ \CO(\exp),
\]
where $\CX$ is the constant given in Proposition~\ref{prop:uh}.
\end{lemma}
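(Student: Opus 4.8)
The plan is to compute the entries of $A_{kj}$ directly using the estimates from Proposition~\ref{prop:uh}, and then invert the nearly-diagonal matrix by a Neumann series argument. First I would split $A_{kj} = \inner{u^h_k}{u^h_j} - \inner{u^h_{kj}}{v}$ and treat the two terms separately. For the first term, Proposition~\ref{prop:uh} gives $\inner{u^h_k}{u^h_j} = \CX\eps^{-1}\delta_{kj} + \CO(\exp)$, so the diagonal part is already $\eps^{-1}\CX$ up to an exponentially small error. For the second term, I would use Cauchy--Schwarz: $|\inner{u^h_{kj}}{v}| \le \|u^h_{kj}\|\,\|v\|$. When $k = j$, Proposition~\ref{prop:uh} gives $\|u^h_{kk}\| = \CO(\eps^{-3/2})$, contributing $\CO(\eps^{-3/2})\|v\| = \eps^{-1}\cdot\CO(\eps^{-1/2})\|v\|$ to the diagonal; when $k \ne j$, the mixed second derivative $u^h_{kj}$ is uniformly exponentially small, so this contributes $\CO(\exp)\|v\|$ to the off-diagonal. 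Collecting these yields exactly the claimed formula
\[
A_{kj} = \eps^{-1}\left[\CX + \CO(\eps^{-1/2})\|v\|\right]\delta_{kj} + \CO(\exp)\|v\|.
\]

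For the inverse, I would write $A = \eps^{-1}(D + E)$ where $D = \CX\,\mathrm{I}_{N+1}$ is the invertible diagonal part and $E$ collects the error terms: $E$ has diagonal entries of size $\CO(\eps^{-1/2})\|v\|$ and off-diagonal entries of size $\CO(\eps\cdot\exp)\|v\|$ (the factor $\eps$ coming from multiplying the $\CO(\exp)\|v\|$ term by $\eps$; this is still exponentially small). Under the hypothesis $\|v\| < c\eps^{1/2+m}$ we get $\|D^{-1}E\|_{\mathrm{op}} \le \CX^{-1}\cdot\CO(\eps^{-1/2})\|v\| + \CO(\exp) = \CO(\eps^{m}) < 1$ for $\eps$ small, so $D + E$ is invertible with
\[
(D+E)^{-1} = D^{-1} - D^{-1}E D^{-1} + \CO(\|D^{-1}E\|^2) = \CX^{-1}\mathrm{I}_{N+1} + \CO(\eps^m),
\]
where I would also need to peel off the exponentially small off-diagonal contribution of $E$ to land the separate $\CO(\exp)$ term; since these entries are $\CO(\exp)\cdot\|v\| = \CO(\exp)$ and the Neumann series only amplifies them by bounded factors, they remain $\CO(\exp)$. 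Multiplying by $\eps$ gives $A^{-1} = \eps[\CX^{-1} + \CO(\eps^m)]\mathrm{I}_{N+1} + \CO(\exp)$, as claimed.

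The only mildly delicate point is bookkeeping the two distinct error scales --- the polynomial-in-$\|v\|$ diagonal perturbation versus the genuinely exponentially small off-diagonal coupling --- through the matrix inversion, so that the final statement correctly separates a $\CO(\eps^m)$ relative error on the diagonal from an additive $\CO(\exp)$ full-matrix error. This is a routine but slightly careful splitting: one inverts $D + E_{\mathrm{diag}}$ first (which stays diagonal) and then treats the exponentially small off-diagonal remainder as an additive perturbation, using $\|(M + R)^{-1} - M^{-1}\| \le \|M^{-1}\|^2\|R\|/(1 - \|M^{-1}\|\|R\|)$ with $\|M^{-1}\| = \CO(\eps)$ and $\|R\| = \CO(\exp)$, so the perturbation is $\CO(\eps^2\cdot\exp) = \CO(\exp)$. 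I do not expect any real obstacle here; the estimates of Proposition~\ref{prop:uh} do all the heavy lifting, and convexity of $\Omega_{\rho_\eps}$ (already noted after Definition~\ref{def:profile}) is not even needed for this particular lemma.
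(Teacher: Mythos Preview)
Your proposal is correct and follows essentially the same route as the paper: invoke Proposition~\ref{prop:uh} for $\inner{u^h_k}{u^h_j}$ and $\|u^h_{kj}\|$, apply Cauchy--Schwarz to $\inner{u^h_{kj}}{v}$, and then invert via a geometric (Neumann) series under the smallness assumption $\|v\| < c\eps^{1/2+m}$. The paper is terser about the bookkeeping you describe at the end---separating the diagonal polynomial perturbation from the off-diagonal exponential one---but the argument is the same.
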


\begin{proof}
We obtain
$
\inner{u^h_k}{u^h_j}  =  \CX \eps^{-1} \delta_{kj} {}+{} \CO(\exp)$ and
$\|u^h_{kj}\|  =  \CO(\eps^{-3/2}) \delta_{kj} +  \CO(\exp)
$ by Pro\-position~\ref{prop:uh}.
Thus,  the bound on $A_{kj}$ follows directly by applying the Cauchy--Schwarz inequality.
Using geometric series, this yields for $\| v \|$ sufficiently small
\begin{equation*}
\begin{split}
A^{-1} \, &= \, \left[ \CX \eps^{-1} + \CO(\eps^{-3/2}) \|v\| \right]^{-1}
\mathrm{I}_{N+1} + \CO(\exp) \\
&= \, \CX^{-1} \eps \left[ 1 + \CO(\eps^{-1/2}) \|v \| \right]^{-1} 
\mathrm{I}_{N+1} + \CO(\exp) \\
&= \, \left[ \CX^{-1} \eps + \CO(\eps^{1+m}) \right] \mathrm{I}_{N+1} 
+ \CO(\exp). \qedhere
\end{split}
\end{equation*}
\end{proof}

Before we continue analyzing the stochastic ODE, let us first show that the coordinate frame around $\CM$ given 
by Definition~\ref{def:Fermi} is well-defined.
We prove that, as long as the matrix~$A$ is invertible, i.e., $\|v\| < \eps^{1/2+m}$, and the nonlinearity is bounded, i.e., $v \in L^4$,
 the coefficients$b$ and $\sigma$  defined by~\eqref{eqAC:b} and~\eqref{eqAC:sigma} are Lipschitz continuous with respect to $h$. 
Note that we will only compute the Lipschitz constant  for $\sigma$ explicitly, as we need it for the analysis of the stochastic ODE.

\begin{lemma}[Lipschitz continuity of $b$ and $\sigma$]
\label{lem:lipschitz}
Let $h, \bar{h} \in \Omega_{\rho_\eps}$ and $v \in L^4(0,1)$ satisfying $\|v\| < \eps^{1/2+m}$ for some 
$m>0$.  Then, there exist constants~$C>0$ and $C_\eps > 0$ (depending on $\eps$ and $\|v\|_{L^4}$) such that
\begin{equation}
\label{eq:lipsigma}
\begin{split}
\| \sigma(h,v) - \sigma(\bar{h},v) \| \, \leq \, C \eps^{-1/2} | h - \bar{h} | \quad \text{and} \quad 
\| b(h,v) - b(\bar{h},v) \| \, \leq \, C_\eps | h - \bar{h} |.
\end{split}
\end{equation}
\end{lemma}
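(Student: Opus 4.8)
The plan is to estimate each of the building blocks appearing in the explicit formulas~\eqref{eqAC:sigma} and~\eqref{eqAC:b} as functions of $h$, and then to combine them via the product/quotient rules for Lipschitz maps, using throughout that $\Omega_{\rho_\eps}$ is convex (so that it suffices to bound the $h$-derivatives of the relevant quantities along segments in $\Omega_{\rho_\eps}$). The two main ingredients are: (a) Lipschitz bounds on $h \mapsto u^h_i$ and its $h$-derivatives in $L^2$ (and $L^4$); and (b) a Lipschitz bound on $h \mapsto A(h,v)^{-1}$, which follows from the bound on $A^{-1}$ in Lemma~\ref{lem:matrixAC} together with a Lipschitz bound on $h\mapsto A(h,v)$ via the identity $A^{-1}-\bar A^{-1} = A^{-1}(\bar A - A)\bar A^{-1}$.

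\textbf{Key steps.} First I would record the scaling Lipschitz estimates for the tangent vectors: since $u^h_i(x) = (-1)^i \eps^{-1} U'((x-h_i)/\eps) + \CO(\exp)$ by~\eqref{eq:uhx}, differentiating in $h_i$ produces a factor $\eps^{-1}$ and one more derivative of $U$, so that along a segment in $\Omega_{\rho_\eps}$ one gets $\|u^h_i - u^{\bar h}_i\| \leq C\eps^{-3/2}|h-\bar h|$, and similarly $\|u^h_{ij}-u^{\bar h}_{ij}\| \leq C\eps^{-5/2}|h-\bar h|$, with the mixed terms exponentially small; the analogous $L^4$ estimates carry an extra favourable power of $\eps^{1/4}$ from rescaling. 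Next, using $\|u^h_{kj}\| = \CO(\eps^{-3/2})\delta_{kj}+\CO(\exp)$ and the Lipschitz bound just obtained, Cauchy--Schwarz gives $|A_{kj}(h,v)-A_{kj}(\bar h,v)| \leq C(\eps^{-3/2} + \eps^{-5/2}\|v\|)|h-\bar h| + \CO(\exp)\|v\|$, and since $\|v\|<\eps^{1/2+m}$ this is $\CO(\eps^{-3/2})|h-\bar h|$. Combined with $\|A^{-1}\| = \CO(\eps)$ and $\|\bar A^{-1}\|=\CO(\eps)$ from Lemma~\ref{lem:matrixAC}, the resolvent identity yields $\|A^{-1}(h,v) - A^{-1}(\bar h,v)\| \leq C\eps^{1/2}|h-\bar h|$. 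For $\sigma$, write $\sigma_r(h,v) - \sigma_r(\bar h,v) = \sum_i (A_{ri}^{-1}(h,v) - A_{ri}^{-1}(\bar h,v)) u^h_i + \sum_i A_{ri}^{-1}(\bar h,v)(u^h_i - u^{\bar h}_i)$; the first sum is bounded by $C\eps^{1/2}|h-\bar h| \cdot \CO(\eps^{-1/2}) = C|h-\bar h|$ and the second by $\CO(\eps)\cdot C\eps^{-3/2}|h-\bar h| = C\eps^{-1/2}|h-\bar h|$, giving the stated bound $\|\sigma(h,v)-\sigma(\bar h,v)\| \leq C\eps^{-1/2}|h-\bar h|$.

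\textbf{The drift.} For $b$ in~\eqref{eqAC:b} the same mechanism applies term by term. The factors $A_{ri}^{-1}$, $u^h_i$, $u^h_{ij}$, $u^h_{ijk}$, $v$ are each Lipschitz in $h$ with the scalings above (and $v$ is held fixed, contributing only through its $L^2$ and $L^4$ norms); the only genuinely new point is the term $\langle u^h_i, \CL(u^h+v)\rangle$, where one expands $\CL(u^h+v) = \CL(u^h) + \CL^h v + \CN^h(v)$, uses $\CL(u^h) = \CO(\exp)$ and $\CL^h u^h_i = \CO(\exp)$ (Proposition~\ref{prop:uh}), and controls the Lipschitz dependence of the nonlinear remainder $\CN^h(v) = -3u^h v^2 - v^3$ using $\|u^h - u^{\bar h}\|_\infty = \CO(\eps^{-1})|h-\bar h|$ and $v\in L^4$ (so that $\|v^2\|, \|v^3\|$ are finite). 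Collecting everything produces a constant $C_\eps$ that is polynomial in $\eps^{-1}$ and depends on $\|v\|_{L^4}$, which is all that is claimed. The main obstacle is purely bookkeeping: one must track which factors are differentiated, keep the exponentially small cross-terms under control, and verify that the worst power of $\eps^{-1}$ that appears in $b$ (coming from the products $A_{ri}^{-1}\langle u^h_{ij},u^h_k\rangle\langle\CQ\sigma_j,\sigma_k\rangle$ and from $\CN^h$) is still finite for fixed $\eps$; there is no conceptual difficulty beyond the estimates of Proposition~\ref{prop:uh} and Lemma~\ref{lem:matrixAC}, which is why only the $\sigma$-constant is tracked explicitly.
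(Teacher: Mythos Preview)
Your approach is essentially the paper's: both bound the $h$-derivative of $\sigma = A^{-1}\partial_h u^h$ via the product rule (you phrase it through the resolvent identity $A^{-1}-\bar A^{-1}=A^{-1}(\bar A-A)\bar A^{-1}$, the paper through $\mathrm{D}_hA^{-1}=-A^{-1}(\mathrm{D}_hA)A^{-1}$), use convexity of $\Omega_{\rho_\eps}$ to pass from derivative bounds to Lipschitz bounds, and leave the drift $b$ to analogous bookkeeping. One minor point worth flagging: your claim that Cauchy--Schwarz gives Lipschitz constant $\CO(\eps^{-3/2})$ for $h\mapsto A(h,v)$ is slightly off --- naive Cauchy--Schwarz on the metric term $\inner{u^h_k}{u^h_j}$ actually yields $\CO(\eps^{-2})$ --- whereas the paper exploits the extra cancellation $\inner{u^h_{kk}}{u^h_k}=\CO(\exp)$ from Proposition~\ref{prop:uh} to obtain the sharper $\mathrm{D}_hA^{-1}=\CO(\eps^{-1/2}\|v\|)=\CO(\eps^m)$. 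This does not affect your conclusion, since in either estimate the dominant contribution to $\mathrm{D}_h\sigma$ comes from $A^{-1}\partial_h^2 u^h=\CO(\eps)\cdot\CO(\eps^{-3/2})=\CO(\eps^{-1/2})$, and the final Lipschitz constant for $\sigma$ is correct.
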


\begin{proof}
Note that in the following computation the pair $(h,v)$ does not denote
the Fermi coordinate defined in Definition~\ref{def:Fermi} and therefore, $v$ does not depend on $h$. We start with estimating the derivative of the inverse $A^{-1}(h,v)$.  By construction of $u^h$,
the matrix~$A(h,v)$ is smooth in~$h$ and we compute 
\[
\partial_{h_k} A_{ij} \, = \ \frac{\partial( \inner{u^h_i}{u^h_j} - \inner{u^h_{ij}}{v})}{\partial h_k}
\, = \, \inner{u^h_{ik}}{u^h_j} + \inner{u^h_i}{u^h_{jk}} - \inner{u^h_{ijk}}{v},
\]
which by Proposition~\ref{prop:uh} is exponentially small unless $i=j=k$. In the latter case, we have
\[
\partial_{h_k} A_{kk} \, = \, 2 \inner{u^h_{kk}}{u^h_k} - \inner{u^h_{kkk}}{v} \, = \, \CO(\exp) + \CO(\eps^{-5/2} \|v\|).
\]
By virtue of $\mathrm{D}_hA^{-1}  = - A^{-1} (\mathrm{D}_hA) A^{-1}$ and $A^{-1} = \CO(\eps)$ (cf.~Lemma~\ref{lem:matrixAC}), this yields
\[
\mathrm{D}_hA^{-1} \, = \, \CO(\eps^{-1/2} \|v\|) \, = \, \CO(\eps^m).  
\]
Recall that $\sigma(h,v) = A^{-1} \cdot \partial_h u^h$. Differentiating with respect to $h$ yields
\[
\mathrm{D}_h\sigma \, = \, \mathrm{D}_h A^{-1} \partial_h u^h + A^{-1} \partial_h^2 u^h
\]
and thus, by the previous bound on $\mathrm{D}_h A^{-1}$ and Proposition~\ref{prop:uh},
$
\|\mathrm{D}_h\sigma(h,v) \| \, = \, \CO(\eps^{-1/2}).
$
\\
Since the set $\Omega_{\rho_\eps}$ of admissible interface positions is convex, we have
\[
\sigma(h,v) - \sigma(\bar{h},v) \, = \, \int_0^1 \mathrm{D}_h \sigma(\bar{h} + s(h-\bar{h}),v) \, \mathrm{d}s
\]
and with that we easily obtain~\eqref{eq:lipsigma}.

In order to derive the Lipschitz continuity of $b$, one can analogously verify that $b(h,v)$ is differentiable with respect 
to $h$ and the derivative is bounded. Note that only here we need the condition~$v \in L^4$ to control
the nonlinearity $\inner{ \CN^h(v)}{v}$ appearing in the definition~\eqref{eqAC:b} of $b$.
The careful analysis of the Lipschitz constant can be carried out after
some lengthy calculation. We omit the details here.
\end{proof}

\begin{remark}
\label{rem:lipschitzcoord}
We can use the Lipschitz continuity of the coefficients 
to show that  for a solution~$u$ to~\eqref{eq:AC} the Fermi coordinates given by Definition~\ref{def:Fermi}
are locally well defined.
 Since the multi-kink profiles $u^h$ define smooth functions in $h$, we see that by Lemma~\ref{lem:lipschitz} the maps${h \mapsto b(h, u(t) - u^h)}$ and $h \mapsto \sigma(h, u(t) - u^h)$ are locally Lipschitz continuous in $h$. 
Thus, as long as~$h(t)$ lies in $\Omega_{\rho_\eps}$ and $u-u^h$ is sufficiently small 
(see Lemma \ref{lem:lipschitz}) a unique (local) solution $h(t)$ to~\eqref{eq:dh} with 
$v$ replaced by~$u - u^h$ exists.
The pair $(h,v)$ satisfies the
Definition~\ref{def:Fermi} of the Fermi coordinates. See proposition \ref{prop:exFerm}.
\end{remark}

As the matrix $A$ and its inverse are (up to exponentially small terms) diagonal matrices,  we can show that the stochastic ODE in 
the non-massconserving case essentially decouples fully.
We split equation~\eqref{eq:dh} into its deterministic part and a 
remainder $\CA$, where we collect all terms depending on stochastics, i.e., we write
\begin{equation}
\label{eqAC:dhito}
dh_r \,= \, \sum_i A_{ri}^{-1} \langle u^h_i, \CL( u^h + v) \rangle \, dt
       + d\CA^{(r)},
\end{equation}
where by~\eqref{eqAC:sigma} and~\eqref{eqAC:b}
\begin{equation}
\label{eqAC:dA}
\begin{split}
\sum_j A_{kj} \, d\CA^{(j)} \, = \, &\sum_j \inner{u^h_{kj}}{\CQ\sigma_j} \,dt
+ \sum_{i,j} \left[ \frac12 \inner{u^h_{ijk}}{v} 
- \inner{u^h_{kj}} {u^h_i}  
-\frac12 \inner{u^h_k}{u^h_{ij}} \right] \inner{\CQ \sigma_i}{\sigma_j} 
 \, dt  \\
&+ \inner{u^h_k}{dW}.
\end{split}
\end{equation}
The following lemma deals with estimating the process $\CA$ in terms of $\eps$. We see that---up to lower 
order terms---the $k$-th component of $\CA$ does only depend on the derivatives with respect to $h_k$
and hence it decouples. Moreover, its dominating term is of order $\eta_{\eps}$.

\begin{lemma}
\label{lem:estdA}
As long as $\| v(t) \| < \eps^{1/2 + m}$ for some $m > 0$, we have
\[
d\CA^{(k)} \, = \, A^{-2}_{kk} \inner{u^h_{kk}}{\CQ u^h_k} \, dt + A_{kk}^{-1} \inner{u^h_k}{dW} + \CO(\eps^m \eta_{\eps}) \, dt
+ \inner{\CO_{L^2}(\eps^{1/2 + m})}{dW}.
\]
Moreover, the dominating term can be estimated by 
\[
 A^{-2}_{kk} \inner{u^h_{kk}}{\CQ u^h_k} \, dt + A_{kk}^{-1} \inner{u^h_k}{dW} \, = \, \CO( \eta_{\eps}) \, dt + \inner{\CO_{L^2} (\eps^{1/2})}{dW}.
\]
\end{lemma}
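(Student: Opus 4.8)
The strategy is to start from the implicit definition \eqref{eqAC:dA} of $d\CA^{(k)}$, invert the matrix $A$ using Lemma~\ref{lem:matrixAC}, and track the order in $\eps$ and $\eta_\eps$ of every term on the right-hand side, discarding those that fall into the claimed error categories $\CO(\eps^m\eta_\eps)\,dt$ and $\inner{\CO_{L^2}(\eps^{1/2+m})}{dW}$. First I would write $d\CA^{(k)} = \sum_j (A^{-1})_{kj}\,(\text{RHS of \eqref{eqAC:dA}})_j$, and split this according to the three groups of terms in \eqref{eqAC:dA}: the noise term $\inner{u^h_j}{dW}$, the drift term $\sum_\ell \inner{u^h_{j\ell}}{\CQ\sigma_\ell}\,dt$, and the quadratic-in-$\sigma$ drift term. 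Because $A^{-1} = \eps\CX^{-1}\,\mathrm I_{N+1} + \CO(\eps^{1+m}) + \CO(\exp)$, only the diagonal entry $(A^{-1})_{kk} = A_{kk}^{-1}$ contributes at leading order, and the off-diagonal contributions are either $\CO(\exp)$ (from the structure of $A$) or absorbed into $\CO(\eps^{1+m})$ times something controlled; this is what produces the decoupling.

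Next I would handle each group. For the noise term, $(A^{-1})_{kk}\inner{u^h_k}{dW}$ is kept as the stated leading noise contribution, and the correction $(A^{-1} - A_{kk}^{-1}\mathrm I)$ acting on the vector $(\inner{u^h_j}{dW})_j$ has entries of size $\CO(\eps^{1+m})$ (diagonal correction) or $\CO(\exp)$ (off-diagonal), paired with $\inner{u^h_j}{dW}$ where $\|u^h_j\| = \CO(\eps^{-1/2})$ by Proposition~\ref{prop:uh}; this gives an $\inner{\CO_{L^2}(\eps^{1/2+m})}{dW}$ remainder. For the first drift group, substitute $\sigma_\ell = \sum_i (A^{-1})_{\ell i} u^h_i = \eps\CX^{-1} u^h_\ell + \CO(\eps^{1+m})(\cdots) + \CO(\exp)$; then $\inner{u^h_{j\ell}}{\CQ\sigma_\ell}$ is exponentially small unless $j = \ell$, and the surviving $j = \ell = k$ term gives exactly $A_{kk}^{-2}\inner{u^h_{kk}}{\CQ u^h_k}\,dt$ after including the outer $(A^{-1})_{kk}$ factor, with the rest being $\CO(\eps^m\eta_\eps)\,dt$ (using $\mathrm{trace}(\CQ) = \eta_\eps$ and $\|u^h_{kk}\| = \CO(\eps^{-3/2})$, $\|u^h_k\| = \CO(\eps^{-1/2})$, so $\inner{u^h_{kk}}{\CQ u^h_k} = \CO(\eta_\eps\,\eps^{-2})$, and the outer factors $A_{kk}^{-2} = \CO(\eps^2)$ make the leading term $\CO(\eta_\eps)$). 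For the quadratic drift group, the bracket $\tfrac12\inner{u^h_{ij k}}{v} - \inner{u^h_{kj}}{u^h_i} - \tfrac12\inner{u^h_k}{u^h_{ij}}$ is exponentially small unless all indices coincide, and by Proposition~\ref{prop:uh} (specifically $\inner{u^h_{kk}}{u^h_k} = \CO(\exp)$ and $\|u^h_{kkk}\| = \CO(\eps^{-5/2})$ with $\|v\| < \eps^{1/2+m}$) the diagonal term is $\CO(\eps^{-2+m})$; paired with $\inner{\CQ\sigma_i}{\sigma_j} = \CO(\eta_\eps\,\eps^2)$ and the outer $(A^{-1})_{kk} = \CO(\eps)$ this contributes $\CO(\eps^{1+m}\eta_\eps)\,dt$, well inside the error.

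Finally, for the "moreover" part I would simply estimate the two retained leading terms directly: $A_{kk}^{-2}\inner{u^h_{kk}}{\CQ u^h_k}\,dt$ using $A_{kk}^{-1} = \CO(\eps)$ and $\inner{u^h_{kk}}{\CQ u^h_k} = \CO(\eta_\eps \eps^{-2})$ (since $\CQ$ has trace $\eta_\eps$ and $u^h_{kk}$, $u^h_k$ have the stated $L^2$ norms) to get $\CO(\eta_\eps)\,dt$; and $A_{kk}^{-1}\inner{u^h_k}{dW}$ as $\inner{\CO_{L^2}(\eps^{1/2})}{dW}$ since $A_{kk}^{-1}u^h_k$ has $L^2$-norm $\CO(\eps)\cdot\CO(\eps^{-1/2}) = \CO(\eps^{1/2})$. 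The main obstacle I anticipate is not any single estimate but the bookkeeping: keeping careful track of which products of small quantities land in $dt$ versus $dW$ terms, and verifying that the off-diagonal and mixed-derivative contributions really are exponentially small or of the claimed polynomial order — in particular making sure the condition $\|v\| < \eps^{1/2+m}$ is used exactly where needed (in $A^{-1}$'s expansion and in the $\inner{u^h_{kkk}}{v}$ term) and nowhere does an uncontrolled power of $\eps^{-1}$ survive. The $L^4$ hypothesis on $v$ is not needed here since no nonlinearity $\CN^h(v)$ enters $\CA$.
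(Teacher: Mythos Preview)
Your plan is correct and follows essentially the same route as the paper: first expand $\sigma_r = [\CX^{-1}\eps + \CO(\eps^{1+m})]u^h_r + \CO(\exp)$, then estimate each group of terms in \eqref{eqAC:dA} using Proposition~\ref{prop:uh} and Lemma~\ref{lem:matrixAC}, and finally apply the diagonal inverse $A^{-1}$. One small arithmetic slip: $\inner{\CQ\sigma_i}{\sigma_j} = \CO(\eta_\eps\,\eps)$, not $\CO(\eta_\eps\,\eps^2)$, since $\|\sigma_i\| = \CO(\eps^{1/2})$; consequently the quadratic drift contribution is $\CO(\eps^m\eta_\eps)\,dt$ (exactly at the stated error level) rather than $\CO(\eps^{1+m}\eta_\eps)\,dt$, but this does not affect the result.
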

\begin{proof}
Lemma~\ref{lem:matrixAC} and~\eqref{eqAC:sigma} imply directly that
\[
\sigma_r(h,v) \, = \, \left[ \CX^{-1}\eps + \CO(\eps^{1+m}) \right] u^h_r
+ \CO(\exp).
\]
With $\|u^h_r \| = \CO(\eps^{-1/2})$ (cf.~Proposition~\ref{prop:uh}), this yields
$
\|\sigma_r \| \, = \, \CO(\eps^{1/2}).
$
The Cauchy--Schwarz inequality implies for the remaining terms 
of \eqref{eqAC:dA}
\[
\vert \inner{u^h_{kj}}{\CQ \sigma_j} \vert \, \leq \, \|u^h_{kj}\| \,
\|\CQ \| \, \| \sigma_j \| 
\, \leq \, C \eps^{-3/2} \eta_{\eps} \eps^{1/2}
\, = \, C \eps^{-1} \eta_{\eps}
\]
and
\[
\vert \langle u^h_{ijk}, v \rangle \langle \CQ \sigma_i, \sigma_j \rangle \vert
\, \leq \, C \eps^{-5/2} \Vert v \Vert \eta_{\eps} \eps^{1/2} \eps^{1/2}
\, = \, C \eps^{-3/2} \eta_{\eps} \, \Vert v \Vert.
\]
Moreover by Proposition~\ref{prop:uh}, the terms involving inner products of first
and second derivatives of~$u^h$ are exponentially small.
Plugging these estimates into~\eqref{eqAC:dA} yields
\[
\sum_j A_{kj} \, d\CA^{(j)} \, = \, 
\inner{u^h_{kk}}{\CQ \sigma_k} \, dt + \inner{ u^h_k}{dW} 
+ \CO( \eps^{-1+m} \eta_{\eps}) + \CO(\exp).
\]
By using Lemma~\ref{lem:matrixAC}, we obtain 
\begin{equation*}
\begin{split}
d\CA^{(k)} \, &= \, A^{-1}_{kk} \left[ \inner{u^h_{kk}}{\CQ \sigma_k} \, dt
 + \inner{u^h_k + \CO(\exp)}{ dW} 
+ \CO( \eps^{-1+m} \eta_{\eps}) \, dt + \CO(\exp) \, dt \right]  \\
&= \, A^{-2}_{kk} \inner{u^h_{kk}}{\CQ u^h_k}  \, dt
+ A^{-1}_{kk} \inner{ u^h_k + \CO(\exp)}{ dW} 
+ \CO( \eps^{m} \eta_{\eps}) \,dt + \CO(\exp) \, dt. \qedhere
\end{split}
\end{equation*}
\end{proof}

As a next step, we investigate the deterministic part. As we cannot control the
nonlinearity in terms of the~$L^2$-norm, we additionally assume smallness of the normal component $v$ in $L^4$.
In the stability result of Section~\ref{subsec:L4AC}, the maximal $L^4$-radius that we can treat is of 
order~$\eps^{1/4 + m/2-\kappa}$ for small~${\kappa > 0}$.

\begin{lemma}
\label{lem:ACestODE}
Let $m > 0$ and $\kappa > 0$ be very small. For $h\in \Omega_{\rho_\eps}$ and $v \perp u^h_i, i=1,\ldots, N+1$, assume that
$\| v \| < \eps^{1/2 + m}$ and 
$\| v \|_{L^4} < \eps^{1/4 + m/2 - \kappa}$.
Then, we have
\[
\sum_i A_{ri}^{-1} \inner{ u^h_i}{ \CL( u^h + v)}
\, \leq \, C \eps^{2m + 1 - 2 \kappa}.
\]
\end{lemma}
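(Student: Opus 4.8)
The plan is to expand $\CL(u^h+v) = \CL(u^h) + \CL^h v + \CN^h(v)$ and estimate the pairing with $u^h_i$ term by term, then multiply by $A^{-1}_{ri} = \CO(\eps)$ from Lemma~\ref{lem:matrixAC}. First I would handle the almost-stationary term: by Proposition~\ref{prop:uh} we have $\CL(u^h) = \CO(\exp)$ in a suitable norm, and $\|u^h_i\| = \CO(\eps^{-1/2})$, so $\inner{u^h_i}{\CL(u^h)} = \CO(\exp)$, which is negligible.

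Next comes the linear term $\inner{u^h_i}{\CL^h v}$. Here I would integrate by parts (using the Neumann boundary conditions on $u^h$ from \eqref{eq:ACuh} and the fact that $u^h_i$ inherits boundary decay) to move $\CL^h$ onto $u^h_i$, i.e.\ $\inner{u^h_i}{\CL^h v} = \inner{\CL^h u^h_i}{v}$. Differentiating the identity $\CL(u^h) = \CO(\exp)$ with respect to $h_i$ gives $\CL^h u^h_i = \CO(\exp)$ (this is exactly the computation used in the proof of Theorem~\ref{thm:gapMAC}). Combined with $\|v\| < \eps^{1/2+m}$ this yields $\inner{u^h_i}{\CL^h v} = \CO(\exp)\cdot \eps^{1/2+m} = \CO(\exp)$.

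The main term is the nonlinear one, $\inner{u^h_i}{\CN^h(v)}$. In the quartic case $\CN^h(v) = -3u^h v^2 - v^3$, so I need to bound $\inner{u^h_i}{3u^h v^2}$ and $\inner{u^h_i}{v^3}$. Using $\|u^h\|_\infty = \CO(1)$, $\|u^h_i\|_\infty = \CO(\eps^{-1})$, and Hölder's inequality with the $L^4$ control: $|\inner{u^h_i}{u^h v^2}| \le \|u^h_i\|_\infty \|u^h\|_\infty \|v\|^2_{L^2} = \CO(\eps^{-1})\cdot\CO(\eps^{1+2m}) = \CO(\eps^{2m})$, and $|\inner{u^h_i}{v^3}| \le \|u^h_i\|_\infty \|v\|^3_{L^3} \le \|u^h_i\|_\infty \|v\|_{L^2}^{3/2}\|v\|_{L^4}^{3/2} = \CO(\eps^{-1})\cdot\CO(\eps^{3/4+3m/2})\cdot\CO(\eps^{3/8+3m/4-3\kappa/2})$. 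I would then check that the weakest of these, after multiplying by the factor $A^{-1}_{ri} = \CO(\eps)$ and summing over the $\CO(1)$ indices $i$, produces the claimed bound $C\eps^{2m+1-2\kappa}$; the exponent bookkeeping should confirm that the cubic term (or the quadratic one — whichever dominates) is the binding constraint, which is why the $L^4$-radius $\eps^{1/4+m/2-\kappa}$ enters.

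The main obstacle is the exponent arithmetic for the nonlinearity: one must be careful about which $L^p$-interpolation to use (the naive $\|v\|_{L^3}\le\|v\|_{L^4}$ on a bounded domain versus the sharper interpolation between $L^2$ and $L^4$), and about whether the $\|u^h_i\|_\infty = \CO(\eps^{-1})$ bound is really needed or whether a localized estimate exploiting the $\CO(\eps)$-width of the support of $u^h_i$ gives a better power. I would also double-check that no term involving $\CL^h v$ paired against $u^h_i$ fails to be exponentially small — in particular that the integration by parts is legitimate given only $v\perp\CT_h\CM$ in $L^2$ and $v\in L^4$, which should be fine since $u^h_i\in C^\infty$ and the boundary terms vanish by the exponential smallness of $u^h_i$ and its derivatives at $x\in\{0,1\}$.
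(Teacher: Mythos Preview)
Your approach is exactly the paper's: expand $\CL(u^h+v)=\CL(u^h)+\CL^h v+\CN^h(v)$, kill the first two pairings as $\CO(\exp)$ via Proposition~\ref{prop:uh} and self-adjointness of $\CL^h$, bound the nonlinear pairing using $\|u^h_i\|_\infty=\CO(\eps^{-1})$, and finish with $A^{-1}=\CO(\eps)$ from Lemma~\ref{lem:matrixAC}.

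The only slip is in your $L^3$ interpolation. H\"older between $L^2$ and $L^4$ gives $\|v\|_{L^3}\le\|v\|_{L^2}^{1/3}\|v\|_{L^4}^{2/3}$, hence $\|v\|_{L^3}^3\le\|v\|_{L^2}\,\|v\|_{L^4}^2$, not $\|v\|_{L^2}^{3/2}\|v\|_{L^4}^{3/2}$ (your exponents would be strictly sharper, since $\|v\|_{L^2}\le\|v\|_{L^4}$ on a unit interval, and are therefore not attainable). With the correct interpolation the cubic term gives
\[
|\langle u^h_i,v^3\rangle|\le C\eps^{-1}\|v\|\,\|v\|_{L^4}^2\le C\eps^{-1}\cdot\eps^{1/2+m}\cdot\eps^{1/2+m-2\kappa}=C\eps^{2m-2\kappa},
\]
which after multiplication by $A^{-1}_{ri}=\CO(\eps)$ is precisely the claimed $C\eps^{2m+1-2\kappa}$ and is indeed the binding term (the quadratic one gives $\CO(\eps^{2m+1})$). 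This is exactly the computation the paper carries out, so once you fix the exponents your proof coincides with it.
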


\begin{proof}
Expanding $\CL$ yields $\CL(u^h + v) \, = \, \CL(u^h) + \CL^h v + \CN^h(v)$.
We observe that ${\CL(u^h) = \CO(\exp)}$ by Proposition~\ref{prop:uh}.
Differentiating with respect to $h_i$ yields $\CL^h u^h_i = \CO(\exp)$ and hence,
$\langle \CL^h v, u^h_i \rangle = \langle v, \CL^h [ u^h_i ] \rangle = \CO(\exp)$, since $\CL^h$ is self-adjoint.
The remaining nonlinear term is estimated by
\begin{equation*}
\begin{split}
\inner{\CN^h(v)}{ u^h_i} \, &= \, \int_0^1 3 u^h u^h_i v^2 - u^h_i v^3 \, 
\mathrm{d}x 
\leq \, C \eps^{-1} \left[ \| v \|^2 + \| v \|_{L^3}^3
\right] \\
&\leq \, C \eps^{-1} \left[ \| v \|^2 +
\| v \|\| v \|_{L^4}^2 \right] 
\leq \, C \eps^{2m -2\kappa},
\end{split}
\end{equation*}
where we interpolated the $L^3$-term by Hölder's inequality.
Applying Lemma~\ref{lem:matrixAC} concludes the proof.
\end{proof}

We can finally show that, up to times of order $\CO(\eps \eta_{\eps}^{-1})$, the motion of the 
kinks is approximately given by the projection of the Wiener process onto the slow
manifold $\CM$, that is, for~$k=1,\ldots,N+1$ 
\begin{equation}
\label{eq:approxdh}
d\tilde{h}_k \, = \, \frac{1}{\| u^{\tilde{h}}_k \|^2} \inner{u^{\tilde{h}}_k}{\circ \, dW}.
\end{equation}
At times of order $\CO(\eps \eta_{\eps}^{-1})$ the droplet is expected to move by the magnitude of $\eps$
and hence, we treat the relevant time scale in our analysis, since we have to assure that the distance between two
kinks is at least $\eps^{1-}$ (cf.~Remark~\ref{rem:timescale}).
For a sufficiently large noise strength, the stochastic effects dominate the dynamics and hence,
as expected, the approximation by the purely stochastic process is better for a larger noise strength $\eta_{\eps}$. In our main stability
result (see Theorem~\ref{thm:multiAC:L4stabAC}) the maximal strength we can treat is of order $\eps^{1 + 2m}$. 

\begin{theorem}[Approximation of the exact dynamics]
\label{thm:approxBB}
Let $h(t)$ be a solution to~\eqref{eq:dh} and $\tilde{h}(t)$ be a solution to~\eqref{eq:approxdh}.
For~${m> 0}$ and small $\kappa > 0$, define the stopping time
\begin{equation*}
\tau \, \coloneqq \, \inf \left\{ t \geq 0 \; : \; h(t) \notin \Omega_{\rho_\eps} \quad \text{or}  \quad\| v \| \, > \, \eps^{1/2 + m} \quad \text{or} \quad
\| v \|_{L^4} \, > \, \eps^{1/4 + m/2 - \kappa} \right\}.
\end{equation*}
Then, for a stopping time $T \leq c \eps \eta_{\eps}^{-1} \wedge \tau$, we obtain
\begin{align*}
\E \sup_{0 \leq t \leq T} \vert h(t) - \tilde{h}(t) \vert \, \leq \, C \eps +  C\eps^{2m+ 2 - 2\kappa}  \eta_{\eps}^{-1}.
\end{align*}
\end{theorem}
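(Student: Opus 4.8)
The plan is to compare the two diffusion processes $h$ and $\tilde h$ by writing their difference as a stochastic integral plus a drift integral and estimating each piece up to the stopping time $T \le c\eps\eta_\eps^{-1}\wedge\tau$. First I would note that on $[0,T]$ the normal component satisfies $\|v\|<\eps^{1/2+m}$ and $\|v\|_{L^4}<\eps^{1/4+m/2-\kappa}$, so that Lemmas~\ref{lem:matrixAC}, \ref{lem:estdA}, and~\ref{lem:ACestODE} are all applicable along the path. Using the It\^o--Stratonovich conversion, \eqref{eq:approxdh} reads in It\^o form
\[
d\tilde h_k \,=\, \tfrac12\partial_{h_k}\!\big(\|u^{\tilde h}_k\|^{-2}\big)\langle u^{\tilde h}_k,\CQ u^{\tilde h}_k\rangle\,dt + \frac{1}{\|u^{\tilde h}_k\|^2}\langle u^{\tilde h}_k, dW\rangle,
\]
and by Proposition~\ref{prop:uh} the Stratonovich correction here is exactly the analogue of the $A_{kk}^{-2}\langle u^h_{kk},\CQ u^h_k\rangle$ term in Lemma~\ref{lem:estdA} up to lower order (since $A_{kk}=\|u^h_k\|^2+\CO(\eps^{-1/2}\|v\|)$ and $\|u^{\tilde h}_k\|^{-2}=\CX^{-1}\eps+\CO(\exp)$). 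Combining \eqref{eqAC:dhito} with Lemmas~\ref{lem:estdA} and~\ref{lem:ACestODE}, the exact dynamics is
\[
dh_k \,=\, A^{-2}_{kk}\langle u^h_{kk},\CQ u^h_k\rangle\,dt + A_{kk}^{-1}\langle u^h_k,dW\rangle + \CO(\eps^{2m+1-2\kappa})\,dt + \langle\CO_{L^2}(\eps^{1/2+m}),dW\rangle,
\]
where the $\CO(\eps^{2m+1-2\kappa})$ absorbs both the deterministic term of Lemma~\ref{lem:ACestODE} (after multiplying by $A^{-1}_{ri}=\CO(\eps)$) and the $\CO(\eps^m\eta_\eps)$ from Lemma~\ref{lem:estdA}, using $\eta_\eps\le\eps^{1+2m}\le\eps^{1+m-2\kappa}$ in the regime of Theorem~\ref{thm:multiAC:L4stabAC}.

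Next I would subtract the two equations. The drift difference splits into: (a) the error terms $\CO(\eps^{2m+1-2\kappa})$, contributing $\CO(\eps^{2m+1-2\kappa})\cdot T \le \CO(\eps^{2m+2-2\kappa}\eta_\eps^{-1})$ after integrating over $[0,T]$ and using $T\le c\eps\eta_\eps^{-1}$; (b) the difference of the ``principal'' drift and Stratonovich-correction terms evaluated at $h$ versus $\tilde h$, which are Lipschitz in the position variable with constant $\CO(\eta_\eps)$ (each such term is $\CO(\eta_\eps)$ in size and smooth in $h$ with $\eps^{-1}$-scale derivatives, but the $\eta_\eps$ prefactor dominates), giving a Gr\"onwall-type contribution $\CO(\eta_\eps)\int_0^t|h-\tilde h|\,ds$. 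The martingale part is $\int_0^t\langle A_{kk}^{-1}u^h_k - \|u^{\tilde h}_k\|^{-2}u^{\tilde h}_k + \CO_{L^2}(\eps^{1/2+m}),dW\rangle$; here the $\CO_{L^2}(\eps^{1/2+m})$ piece has quadratic variation $\CO(\eps^{1+2m}\eta_\eps)\cdot T\le \CO(\eps^{2+2m})$, so by Burkholder--Davis--Gundy its supremum is $\CO(\eps^{1+m})$, which is dominated by $\CO(\eps)$; and the difference $A_{kk}^{-1}u^h_k-\|u^{\tilde h}_k\|^{-2}u^{\tilde h}_k$ splits further into $(A_{kk}^{-1}-\|u^{h}_k\|^{-2})u^h_k$ — of $L^2$-size $\CO(\eps^{1/2}\cdot\eps^{-1/2}\|v\|\cdot\eps^{-1/2})=\CO(\eps^{-1/2}\|v\|)=\CO(\eps^{1/2+m-1/2})$, wait, more carefully $\CO(\eps^{1/2+m})$ — plus a Lipschitz-in-position term $\|u^h_k\|^{-2}u^h_k-\|u^{\tilde h}_k\|^{-2}u^{\tilde h}_k$ with $L^2$-Lipschitz constant $\CO(\eps^{-1/2})$ (cf.\ the $\mathrm D_h\sigma$ bound in Lemma~\ref{lem:lipschitz}). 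Taking $\E\sup_{0\le t\le T}|\cdot|$, applying BDG to the martingale terms, and collecting: the position-dependent Lipschitz martingale term contributes $\le C\eps^{-1/2}(\int_0^T\E\sup_{s\le t}|h-\tilde h|^2\,dt)^{1/2}$ which after a standard argument (or simply bounding crudely using $|h-\tilde h|\le\eps^{\kappa}$-scale a priori on $\Omega_{\rho_\eps}$, then $T\le c\eps\eta_\eps^{-1}$ so $\eps^{-1}\cdot T = c\eta_\eps^{-1}$, and $\eta_\eps^{-1}\le$...) needs care — the cleanest route is Gr\"onwall on $\phi(t):=\E\sup_{s\le t}|h-\tilde h|$ after squaring only where needed.

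The main obstacle I anticipate is handling the position-dependent martingale term $\int_0^t\langle(\|u^h_k\|^{-2}u^h_k-\|u^{\tilde h}_k\|^{-2}u^{\tilde h}_k),dW\rangle$ cleanly: its quadratic variation involves $\eta_\eps\cdot\CO(\eps^{-1})|h-\tilde h|^2$, so after BDG one gets a term like $C\eta_\eps^{1/2}\eps^{-1/2}(\int_0^T\E\sup_{s\le t}|h-\tilde h|^2)^{1/2}$, and turning this together with the $\CO(\eta_\eps)$-Lipschitz drift into a Gr\"onwall estimate requires either passing through $\E\sup|h-\tilde h|^2$ (doubling the work) or using Young's inequality to reabsorb; one must check that the resulting constant in the exponential, of order $\eta_\eps\eps^{-1}\cdot T\le c$, stays bounded on the time scale $T\le c\eps\eta_\eps^{-1}$ — which is precisely why that time scale appears. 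Once Gr\"onwall closes with an $\CO(1)$ exponential factor, the two surviving contributions are $\CO(\eps)$ (from the $\eps^{1/2+m}$ and $\eps^{1/2}$-scale martingale/drift mismatches, with the $\eps^{1/2+m}$ already $\ll\eps^{1/2}\ll\eps^{\,?}$ — in fact these collapse to $\CO(\eps)$ after the $\eps^{-1/2}$ Lipschitz scaling against the size of $\Omega_{\rho_\eps}$) and $\CO(\eps^{2m+2-2\kappa}\eta_\eps^{-1})$ (from the deterministic error terms), giving exactly the claimed bound. I would present the Gr\"onwall step in detail and relegate the routine BDG and Cauchy--Schwarz estimates to brief remarks, citing Proposition~\ref{prop:uh}, Lemma~\ref{lem:matrixAC}, Lemma~\ref{lem:estdA}, Lemma~\ref{lem:ACestODE}, and Lemma~\ref{lem:lipschitz} for each.
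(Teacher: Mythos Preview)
Your approach is correct and essentially mirrors the paper's: subtract the two SDEs, bound the total drift via Lemmas~\ref{lem:estdA} and~\ref{lem:ACestODE} together with the It\^o--Stratonovich correction $I(\tilde h)=\CO(\eta_\eps)$, then control the martingale part by Burkholder and the $\CO(\eps^{-1/2})$ Lipschitz bound of Lemma~\ref{lem:lipschitz}, absorbing the position-dependent feedback using precisely $\eta_\eps^{1/2}\eps^{-1/2}T^{1/2}<1$ on the time scale $T\le c\eps\eta_\eps^{-1}$. The only simplifications in the paper's version are that the drift is bounded \emph{uniformly} by $\CO(\eta_\eps+\eps^{2m+1-2\kappa})$ (no Lipschitz-in-position splitting is needed there), and the absorption is done in one step by dividing through by $1-\eta_\eps^{1/2}\eps^{-1/2}T^{1/2}$ rather than a Gr\"onwall iteration.
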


\begin{proof}
For notational convenience, we define for $h, \tilde{h} \in \Omega_{\rho_\eps}$ the maps
\[
\gamma_r(h) \, \coloneqq \, \frac{u^h_r}{ \| u^h_r\|^2} \quad \text{and} \quad \Delta(h,\tilde{h}) \, \coloneqq \, \gamma_r(h) - \gamma_r(\tilde{h}).
\]
By~\eqref{eq:dh}, \eqref{eqAC:sigma}, and Lemma~\ref{lem:matrixAC}, we derive
for $t \leq T$ 
\begin{equation*}
 h_r(t) - \tilde{h}_r (t) \, \leq \, \int_0^t  b_r(s) + I_r(\tilde{h}(s)) \, \mathrm{d}s + 
 \int_0^t \inner{\Delta(h,\tilde{h}) +\CO(\eps^{1+m}) u^h_r\, }{ \, \mathrm{d}W}.
\end{equation*}
Here, $I(\tilde{h})$ collects all the terms that appear after a conversion of the Stratonovich SDE~\eqref{eq:approxdh} into an It\^o SDE.
This is important, as we need the stochastic integral to be a martingale. These It\^o-Stratonovich correction terms are essentially identical to the terms
in~\eqref{eqAC:b}, where we set $v=0$ and replace the matrix $A(\tilde{h},v)$  by $S_{kj}(\tilde{h}) = A(\tilde{h},0) = \langle u_k^{\tilde{h}}, u_j^{\tilde{h}} \rangle$. In more detail, one easily computes that
\begin{equation*}
\begin{split}
I_r(\tilde{h}) \, &= \, \sum_i S_{ri}^{-1} \sum_j \langle u^h_{ij}, \CQ \sigma_j(\tilde{h},0) \rangle + \sum_{i,j,k} S_{ri}^{-1} \left[ - \langle u^h_{ij}, u^h_k \rangle 
-\frac12 \langle u^h_i, u^h_{jk} \rangle \right] \langle \CQ \sigma_j(\tilde{h},0), \sigma_k(\tilde{h},0) \rangle \\
&= \, S_{rr}^{-2} \langle u^{\tilde{h}}_{rr}, \CQ u^{\tilde{h}}_r \rangle + \CO(\exp)  = \CO(\eta_{\eps}).
\end{split}
\end{equation*}
Here, we utilized that $S_{ri} = \langle u_r^{\tilde{h}}, u_i^{\tilde{h}} \rangle = \CX^{-1} \eps \delta_{ri} + \CO(\exp)$  by  Proposition~\ref{prop:uh} and, as $v=0$,  $\sigma_r(\tilde{h},0) = \sum S_{ri}^{-1} u^{\tilde{h}}_i = S_{rr}^{-1} u^{\tilde{h}}_r + \CO(\exp)$. Moreover, the inner product of first derivatives with 
second derivatives of $u^h$ is exponentially small due to Proposition~\ref{prop:uh}. \\
In Lemmata~\ref{lem:estdA} and~\ref{lem:ACestODE},  we established an $L^\infty$-bound for $b$ up to the stopping time $\tau$, namely,
\[
\sup_{0 \leq t \leq \tau} \vert b \vert \, \leq \, c (\eta_{\eps} + \eps^{2m+1 - 2 \kappa} )
\]
Combining this with the bound of the Ito-Stratonovich correction term $I$ yields
\begin{equation*}
\begin{split}
\E \sup_{0 \leq t \leq T} \vert h_r(t) - \tilde{h}_r(t) \vert \, \leq \, c (\eta_{\eps} + \eps^{2m+1 - 2 \kappa} )  T 
 + \E \sup_{0 \leq t \leq T} \Big\vert \int_0^t \inner{\Delta(h,\tilde{h}) +\CO(\eps^{1+m}) u^h_r}{\mathrm{d}W} \Big\vert.
\end{split}
\end{equation*}
By Burkholder's inequality and Lipschitz continuity of $\gamma$ with Lipschitz constant of order~$\CO(\eps^{-1/2})$ 
(cf.~Lemma~\ref{lem:lipschitz}), the martingale term is estimated by
\begin{align*}
\E &\sup_{0 \leq t \leq T} \Big\vert \int_0^t \inner{\Delta(h,\tilde{h})+\CO(\eps^{1+m}) u^h_r}{\mathrm{d}W} \Big\vert \\
&\leq \, C \E \left[ \int_0^T \inner{\Delta(h,\tilde{h}) +\CO(\eps^{1+m}) \, u^h_r}
{\CQ(\Delta(h,\tilde{h}) +\CO(\eps^{1+m}) \, u^h_r)} \, \mathrm{d}s \right]^{1/2} \\
&= \, C \E  \left[ \int_0^T \inner{\Delta(h,\tilde{h})}{\CQ\Delta(h,\tilde{h}) }
                    +\CO(\eps^{1+m}) \inner{\CQ u^h_r}  {\Delta(h,\tilde{h})}
                    +\CO(\eps^{2+2m}) \inner{ u^h_r}{\CQ u^h_r} \right]^{1/2} \\
&\leq \, C \E \left[ \int_0^T \eta_{\eps} \|\Delta(h,\tilde{h}) \|^2
                    +\eps^{1/2+m} \eta_{\eps} \| \Delta(h,\tilde{h}) \|
                    +\eps^{1+2m} \eta_{\eps} \right]^{1/2} \\
&\leq \,  C \E \left[ \int_0^T \eta_{\eps} \|\Delta(h,\tilde{h})\|^2     
                                                    +\eps^{1+2m} \eta_{\eps} \right]^{1/2} \\
&\leq \,  C \E \left[ \int_0^T \eta_{\eps} \eps^{-1} \|h(s) - \tilde{h}(s)\|^2 \, \mathrm{d}s    
                                                    +\eps^{1+2m} \eta_{\eps} T \right]^{1/2}  \\
&\leq \,  C \eta_{\eps}^{1/2} \eps^{-1/2} T^{1/2} \, \E \sup_{0\leq t \leq T} \|h(t) - \tilde{h}(t)\|^2 
                                               +C\eps^{1/2+m} \eta_{\eps}^{1/2} T^{1/2}.                                               
\end{align*}
With the assumption $T < c\eps \eta_{\eps}^{-1}$, this implies 
\begin{equation*}
\begin{split}
\E \sup_{0 \leq t \leq T} \vert h(t) - \tilde{h}(t) \vert \, \leq \, C \frac{(\eta_{\eps} + \eps^{2m+1 - 2 \kappa} )T 
+ \eps^{1/2+m} \eta_{\eps}^{1/2} T^{1/2} }
{1 - \eta_{\eps}^{1/2} \eps^{-1/2} T^{1/2}} 
\, \leq \, C \eps +  C\eps^{2m+ 2 - 2\kappa}  \eta_{\eps}^{-1}. \qedhere
\end{split}
\end{equation*}
\end{proof}

\begin{remark}
\label{rem:timescale}
In the definition of admissible parameters $\Omega_{\rho_\eps}$, we had to assume that the distance between
two interfaces is bounded from below by $\CO(\eps^{1-})$. Since 
${\E \, h(\eps \eta_{\eps}^{-1}) \, = \, h(0) + \CO(\eps)}$,
the~interface positions $h(t)$ might have moved by order $\eps$ and thus, a collision of two interfaces can occur, which we
cannot treat in our analysis. Therefore, up to the relevant time, the motion of the kinks behaves
approximately like a Wiener process projected onto the slow manifold. After a breakdown of two interfaces, we could restart our analysis on 
a lower-dimensional slow manifold, where the number of kinks is reduced by two, or one if a kink is annihilated at the boundary. We do not cover this annihilation here.
In the case when our analysis breaks down at the boundary of the slow manifold, we are still too far away from the one with less kinks.
\end{remark}
\subsection{\texorpdfstring{Analysis of the stochastic ODE for \eqref{eq:mAC}}{Analysis of the stochastic ODE for (mAC)} }
To conclude our study of the kink motion, we analyze the mass conserving Allen--Cahn equation. Recall that in 
this case, due to mass conservation, we reduced the parameter space $\Omega_{\rho_\eps}$
via~$h_{N+1}(h_1,\ldots,h_N)$  by one dimension and therefore obtain by chain rule and Lemma~\ref{lem:implfct}
\begin{equation}
\label{eq:multiAC:chainrule}
u^\xi_k \, = \, u^h_k + (-1)^{N-k} u^h_{N+1} + \CO(\exp).
\end{equation}

\begin{remark}
Analogously to Remark~\ref{rem:lipschitzcoord}, we can verify that the Fermi coordinates~$(\xi, v)$ around $\CM_\mu$ are locally well-defined
(cf.~Definition~\ref{def:Fermi}).
The crucial point is that the maps $\xi \mapsto b(\xi, u - u^\xi)$ and~$\xi \mapsto \sigma(\xi, u-u^\xi)$ are sufficiently smooth.
 In Lemma~\ref{lem:lipschitz}, we proved the local Lipschitz continuity of the corresponding maps in the non-massconserving case. In fact, let us show that these maps are even smoother. 
By the expressions in~\eqref{eqAC:sigma} and~\eqref{eqAC:b}, the coefficients $\sigma$ and $b$ depend on $\xi$ via various derivatives of $u^\xi$ (up to the
third order). Note that also the matrix $A$ only depends on derivatives of $u^\xi$. Hence, if the profiles $u^\xi$ are sufficiently smooth, the smoothness is directly 
inherited to the coefficients of the stochastic ODE and we then obtain a unique local solution to $d\xi = b(\xi, u - u^\xi) \, dt + \langle
\sigma(\xi, u-u^\xi), dW \rangle$. 

In our construction of the slow manifold, we summed up rescaled and translated solutions to the ODE $U'' - F'(U) = 0$.
In the toy case $F(u) = \tfrac14 (u^2-1)^2$, one obtains the explicit solution~$\tanh(x / \sqrt2)$, which is of course $C^\infty$-smooth. 
Thereby, we see that the multi-kink configuration~$u^\xi$ is sufficiently smooth with
respect to $\xi$, which shows that the aforementioned maps are at least $C^1$-functions. For details on how to obtain the well-definedness of the
Fermi coordinates, we refer to Remark~\ref{rem:lipschitzcoord}.
\end{remark}

Just like in the analysis of~\eqref{eq:AC}, we first show the invertibiliy of the matrix $A$.
To start with, we consider the metric tensor $S_{kj}  =  \inner{ u^\xi_k}{ u^\xi_j},$
which does not depend on $v$. Due to the coupling through the
mass constraint, the matrix $S$ and its inverse are no longer diagonal. As we will
see, this has an impact on the stochastic ODE governing the motion of
the kinks.

\begin{lemma}
\label{lem:maininverse}
For $u^\xi \in \CM_\mu$ and $j,k \in \{1,\ldots,N\}$ we have
\[
S_{kj} \, = \, \inner{ u^\xi_k}{ u^\xi_j} \, = \, \CX \eps^{-1} \left[ \delta_{kj} + (-1)^{k+j} \right]
+ \CO(\exp),
\]
where $\CX$ is the constant given in Proposition~\ref{prop:uh}.
\end{lemma}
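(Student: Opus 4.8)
The plan is to reduce the computation of $S_{kj} = \inner{u^\xi_k}{u^\xi_j}$ to the already-known inner products on the larger manifold $\CM$, using the chain-rule identity \eqref{eq:multiAC:chainrule}. Writing $u^\xi_k = u^h_k + (-1)^{N-k} u^h_{N+1} + \CO(\exp)$ and $u^\xi_j = u^h_j + (-1)^{N-j} u^h_{N+1} + \CO(\exp)$, I would expand the inner product bilinearly into four terms:
\[
\inner{u^\xi_k}{u^\xi_j} \, = \, \inner{u^h_k}{u^h_j} + (-1)^{N-j}\inner{u^h_k}{u^h_{N+1}} + (-1)^{N-k}\inner{u^h_{N+1}}{u^h_j} + (-1)^{2N-k-j}\inner{u^h_{N+1}}{u^h_{N+1}} + \CO(\exp),
\]
where the $\CO(\exp)$ cross terms produced by multiplying against the exponentially small remainders are absorbed, using that all the $u^h_i$ have $L^2$-norm of order $\eps^{-1/2}$ so the products stay exponentially small.

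Next I would invoke Proposition~\ref{prop:uh}, which gives $\inner{u^h_i}{u^h_j} = \CX\eps^{-1}\delta_{ij} + \CO(\exp)$. For $j,k \in \{1,\ldots,N\}$ the index $N+1$ is distinct from both $k$ and $j$, so the two mixed terms $\inner{u^h_k}{u^h_{N+1}}$ and $\inner{u^h_{N+1}}{u^h_j}$ are $\CO(\exp)$; the first term contributes $\CX\eps^{-1}\delta_{kj}$ and the last term contributes $(-1)^{2N-k-j}\CX\eps^{-1} = (-1)^{k+j}\CX\eps^{-1}$ (since $(-1)^{2N}=1$ and $(-1)^{-k-j}=(-1)^{k+j}$). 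Summing these yields exactly $S_{kj} = \CX\eps^{-1}\big[\delta_{kj} + (-1)^{k+j}\big] + \CO(\exp)$, as claimed.

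This is essentially a bookkeeping argument, so I do not anticipate a genuine obstacle; the only point requiring a little care is checking that the exponentially small errors truly remain exponentially small after being paired in an $L^2$ inner product with functions whose norms blow up polynomially in $\eps^{-1}$. That is controlled because the error terms in \eqref{eq:multiAC:chainrule} are themselves uniformly exponentially small (in particular in $L^2$), so Cauchy--Schwarz against a quantity of size $\CO(\eps^{-1/2})$ still gives $\CO(\exp)$; and the genuinely mixed inner products $\inner{u^h_i}{u^h_{N+1}}$ with $i\le N$ are exponentially small by the disjoint-support property of the tangent vectors in Proposition~\ref{prop:uh}. Everything else is just collecting signs.
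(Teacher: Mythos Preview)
Your proposal is correct and follows essentially the same approach as the paper's own proof: expand $\inner{u^\xi_k}{u^\xi_j}$ via the chain-rule identity \eqref{eq:multiAC:chainrule}, then apply Proposition~\ref{prop:uh} to each of the resulting inner products. The paper's argument is simply a terser version of yours, omitting the explicit four-term expansion and the discussion of why the $\CO(\exp)$ remainders survive Cauchy--Schwarz.
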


\begin{proof}
With Proposition~\ref{prop:uh} and the chain rule~\eqref{eq:multiAC:chainrule}, we compute
\begin{equation*}
\begin{split}
\langle u^\xi_k, u^\xi_j \rangle 
\, &= \, \langle u^h_k + (-1)^{N-k} u^h_{N+1}, u^h_j + (-1)^{N-j} u^h_{N+1} \rangle \\
&= \, \Vert u^h_k \Vert^2 \, \delta_{jk} + (-1)^{k+j} \Vert u^h_{N+1} \Vert^2
+ \CO(\exp) 
\, = \, \CX \eps^{-1} \left[ \delta_{kj} + (-1)^{k+j} \right]
+ \CO(\exp). \qedhere
\end{split}
\end{equation*}
\end{proof}

With the structure of the matrix at hand, we can easily invert $S$.

\begin{lemma}
\label{lem:invertMC}
Let $u^\xi \in \CM_\mu$. The matrix $S$ is invertible with 
\[
S^{-1}_{kj} \, = \, \eps/ \CX \left[ \delta_{kj} + \frac{1}{N+1} (-1)^{k+j+1}
\right] + \CO(\exp).
\]
\end{lemma}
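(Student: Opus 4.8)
The plan is to recognize the matrix $S$ of Lemma~\ref{lem:maininverse} as a rank-one perturbation of a multiple of the identity and to invert it via the Sherman--Morrison formula. Set $a \coloneqq \CX \eps^{-1}$ and let $w \in \R^N$ be the vector with entries $w_i = (-1)^i$, so that $(w w^\top)_{kj} = (-1)^{k+j}$ and $w^\top w = N$. Then Lemma~\ref{lem:maininverse} reads $S = a\,(\mathrm{I}_N + w w^\top) + E$, where the error matrix $E$ has entries of order $\CO(\exp)$, and since $N$ is fixed, also $\|E\| = \CO(\exp)$.

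First I would invert the leading part $S_0 \coloneqq a\,(\mathrm{I}_N + w w^\top)$. The symmetric matrix $w w^\top$ has eigenvalue $N$ on $\mathrm{span}\{w\}$ and $0$ on $w^\perp$, so $\mathrm{I}_N + w w^\top$ is positive definite with eigenvalues $1+N$ and $1$, hence invertible and $\|(\mathrm{I}_N + w w^\top)^{-1}\| \le 1$. The Sherman--Morrison identity yields
\[
(\mathrm{I}_N + w w^\top)^{-1} \, = \, \mathrm{I}_N - \frac{w w^\top}{1 + w^\top w} \, = \, \mathrm{I}_N - \frac{1}{N+1}\, w w^\top ,
\]
so that $S_0^{-1} = a^{-1}(\mathrm{I}_N + w w^\top)^{-1}$ has entries $\tfrac{\eps}{\CX}\bigl[\delta_{kj} + \tfrac{1}{N+1}(-1)^{k+j+1}\bigr]$, which is precisely the asserted leading term; in particular $\|S_0^{-1}\| = \CO(\eps)$.

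It remains to absorb the exponentially small perturbation. Since $\|S_0^{-1} E\| \le \|S_0^{-1}\|\,\|E\| = \CO(\eps)\cdot\CO(\exp) = \CO(\exp)$, for $\eps$ small the Neumann series converges and
\[
S^{-1} \, = \, (S_0 + E)^{-1} \, = \, \sum_{n \ge 0} \bigl(-S_0^{-1} E\bigr)^n S_0^{-1} \, = \, S_0^{-1} + \CO(\exp),
\]
because every term with $n \ge 1$ carries at least one factor $S_0^{-1} E$ and is thus $\CO(\exp)$, the polynomial-in-$\eps$ prefactors being harmless against the exponentially small factor. This gives exactly the claimed formula for $S^{-1}$. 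There is no genuine obstacle here; the only point requiring a little care is the bookkeeping convention that a product of an $\CO(\eps^k)$ quantity with an exponentially small one is again $\CO(\exp)$, which is used throughout the paper.
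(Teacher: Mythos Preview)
Your proof is correct, but it takes a different route from the paper's. The paper simply verifies the formula directly: it writes out $\sum_{j=1}^N S_{kj}\,S^{-1}_{jl}$ with the exponential terms suppressed, expands the four cross-terms, and checks that they collapse to $\delta_{kl}$. You instead recognize the leading part of $S$ as a rank-one update $a(\mathrm{I}_N + ww^\top)$, invert it via Sherman--Morrison, and then absorb the exponentially small remainder by a Neumann series.

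The paper's verification is shorter once the answer is in hand---it is a two-line calculation. Your argument, on the other hand, explains \emph{where} the inverse comes from rather than pulling it out of a hat, and it is more careful about the $\CO(\exp)$ perturbation: the paper literally says ``ignoring exponentially small terms'' and leaves the Neumann-series step implicit, whereas you make it explicit and note that the polynomial-in-$\eps$ prefactor $\|S_0^{-1}\| = \CO(\eps)$ is harmless against an exponentially small factor. Both approaches are valid; yours is the more conceptual and self-contained.
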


\begin{proof}
We have (ignoring exponentially small terms)
\begin{equation*}
\begin{split}
\sum_{j=1}^N S_{kj} S^{-1}_{jl} \, &= \, 
\sum_{j=1}^N \left[ \delta_{kj} + (-1)^{k+j} \right]
\left[ \delta_{jl} + \frac{1}{N+1} (-1)^{j+l+1}
\right] \\
&= \, \delta_{jl} + \frac{1}{N+1} (-1)^{k+l+1} + (-1)^{k+l} 
+ \frac{N}{N+1} (-1)^{k+l+1} 
\, = \, \delta_{kl}. \qedhere
\end{split}
\end{equation*}
\end{proof}

Finally, we show that---as long as $\Vert v \Vert$ stays sufficiently small---the full matrix $A(\xi,v)$ given by~\eqref{ODE:matrix}
is invertible. With that, the coefficients of the It\^o diffusion~\eqref{eq:dh} (with $h$ replaced by~$\xi$) are well-defined and we can continue
to study the dynamics of kinks for the mass conserving Allen--Cahn equation in more detail.

\begin{lemma}
\label{lem:multiAC:inversemAC}
Consider the matrix $A_{kj}(\xi,v)  =  S_{kj} - \inner{u^\xi_{kj}}{v}$, where $S$ is given by Lemma~\ref{lem:maininverse}.
\\
Then, as long as $\Vert v \Vert <  \eps^{1/2 + m}$ for some 
$m > 0$, $A$ is invertible with
\[
A^{-1} \, = \, S^{-1} + \CO( \eps^{m + 1}).
\]
\end{lemma}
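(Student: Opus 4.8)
The plan is to treat $A=S-B$ as a perturbation of the invertible matrix $S$ from Lemma~\ref{lem:invertMC}, where $B_{kj}\coloneqq\inner{u^\xi_{kj}}{v}$ is the small correction coming from the normal component $v$. The first step is to estimate $B$ in operator norm. By the chain rule~\eqref{eq:multiAC:chainrule}, $u^\xi_{kj}=u^h_{kj}+(-1)^{N-k}(-1)^{N-j}u^h_{N+1,N+1}+\CO(\exp)$ (mixed derivatives in different $h$-variables are exponentially small by Proposition~\ref{prop:uh}), so $\|u^\xi_{kj}\|=\CO(\eps^{-3/2})+\CO(\exp)$ uniformly in $k,j$. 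Hence by Cauchy--Schwarz $|B_{kj}|\le\|u^\xi_{kj}\|\,\|v\|=\CO(\eps^{-3/2})\|v\|$, and since the matrix is of fixed size $N\times N$ this also bounds its operator norm: $\|B\|=\CO(\eps^{-3/2}\|v\|)$. Under the hypothesis $\|v\|<\eps^{1/2+m}$ this gives $\|B\|=\CO(\eps^{-1+m})$.

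The second step is the standard Neumann-series inversion. From Lemma~\ref{lem:invertMC} we have $\|S^{-1}\|=\CO(\eps)$ (the entries are $\CO(\eps)$ and the size is fixed), so $\|S^{-1}B\|=\CO(\eps)\cdot\CO(\eps^{-1+m})=\CO(\eps^m)$, which is $<1$ for $\eps$ small enough. Therefore $A=S(\mathrm{I}-S^{-1}B)$ is invertible and
\[
A^{-1}\, = \,(\mathrm{I}-S^{-1}B)^{-1}S^{-1}\, = \,\Big(\mathrm{I}+S^{-1}B+(S^{-1}B)^2+\cdots\Big)S^{-1}\, = \,S^{-1}+S^{-1}BS^{-1}+\CO\big(\|S^{-1}B\|^2\|S^{-1}\|\big).
\]
The leading error term is $S^{-1}BS^{-1}$, whose norm is at most $\|S^{-1}\|^2\|B\|=\CO(\eps^2)\cdot\CO(\eps^{-1+m})=\CO(\eps^{m+1})$, and the higher-order terms are $\CO(\eps^{2m}\cdot\eps)=\CO(\eps^{2m+1})$, which is absorbed. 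This yields exactly $A^{-1}=S^{-1}+\CO(\eps^{m+1})$ as claimed; the exponentially small corrections in $S$ and in the $u^\xi_{kj}$ only perturb constants and are swallowed by the $\CO(\eps^{m+1})$ remainder.

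I expect no serious obstacle here: this is essentially the mass-conserving analogue of Lemma~\ref{lem:matrixAC}, the only genuine difference being that $S^{-1}$ is not diagonal, which is harmless since all we use is the scalar bound $\|S^{-1}\|=\CO(\eps)$ from Lemma~\ref{lem:invertMC}. The one point requiring a little care is justifying $\|u^\xi_{kj}\|=\CO(\eps^{-3/2})$ uniformly: one must check via~\eqref{eq:multiAC:chainrule} and Proposition~\ref{prop:uh} that differentiating twice with respect to the reduced variables $\xi$ does not produce any term worse than $\eps^{-3/2}$ in $L^2$ — the $h_{N+1}$-chain-rule factors are $(-1)^{N-k}+\CO(\exp)$ and hence bounded, so this goes through. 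Everything else is the routine geometric-series estimate, and I would simply remark that the computation is analogous to Lemma~\ref{lem:matrixAC} and omit the bookkeeping of the exponentially small terms.
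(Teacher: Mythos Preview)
Your proof is correct and follows essentially the same route as the paper: both write $A=S-B$ with $B_{kj}=\inner{u^\xi_{kj}}{v}=\CO(\eps^{-3/2}\|v\|)$, use $\|S^{-1}\|=\CO(\eps)$ from Lemma~\ref{lem:invertMC}, and invert via the Neumann (geometric) series $A^{-1}=\sum_{j\ge 0}(S^{-1}B)^jS^{-1}=S^{-1}+\CO(\eps^{m+1})$. You supply slightly more detail on the chain-rule bound for $\|u^\xi_{kj}\|$ than the paper does, but the argument is the same.
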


\begin{proof}
For a small perturbation $S(v)$, given by $S_{kj}(v) = \inner{u^\xi_{kj}}{v}$,  of the matrix $S$ we compute via geometric series
\begin{equation*}
\begin{split}
A^{-1} \, &= \, \left[ S - S(v) \right]^{-1} \, = \,  \left[ \mathrm{I}_N - S^{-1} S(v) \right]^{-1} S^{-1} \\
&= \, \sum_{j \in \N} \left[ S^{-1} S(v) \right]^j S^{-1} 
\, = \, S^{-1} + \sum_{j=1}^\infty \left[ S^{-1} S(v) \right]^j S^{-1}
\, = \, S^{-1} + \CO(\eps^{m + 1}),
\end{split}
\end{equation*}\\[-0.6em]
where we used that $S(v) = \CO(\eps^{-3/2} \Vert v \Vert)$ and the sum converges for $\Vert v \Vert < \eps^{1/2 + m}$.
\end{proof}

We continue with estimating the deterministic part of~\eqref{eq:dh}. Similarly to Lemma~\ref{lem:ACestODE}, we 
have to assume smallness of the normal component $v$ in $L^2$ and $L^4$ to control the nonlinearity. In the following lemma, we consider the radii for which we show stochastic stability 
later in Sections~\ref{subsec:L2mAC} and~\ref{subsec:L4mAC}.

\begin{lemma}
\label{lem:estODE}
Let $m > 0$, $\xi \in \CA_{\rho_\eps}$, and $v \perp \CT_{u^\xi} \CM_\mu $. Also, assume that
$\| v \| < \eps^{3/2 + m}$ and~
$\| v \|_{L^4} < \eps^{3/4 + m/2- \kappa}$.
Then, we obtain
\[
\inner{ u^\xi_i}{ \CL( u^\xi + v)}
\, \leq \, C \eps^{2 + 2m -2 \kappa}.
\]
\end{lemma}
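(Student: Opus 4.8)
The plan is to expand $\CL(u^\xi + v) = \CL(u^\xi) + \CL^\xi v + \CN^\xi(v)$ and estimate $\inner{u^\xi_i}{\cdot}$ term by term, exactly as in the proof of Lemma~\ref{lem:ACestODE}, but now keeping track of the extra factors of $\eps$ coming from the mass-conserving geometry. First I would note that $u^\xi = u^h + \CO(\exp)$ restricted to $\CM_\mu$, so Proposition~\ref{prop:uh} still applies and $\CL(u^\xi) = \CO(\exp)$; differentiating the near-stationarity $\CL(u^\xi) = \CO(\exp)$ (equivalently using $\CL^h u^h_i = \CO(\exp)$ together with the chain rule~\eqref{eq:multiAC:chainrule}) gives $\CL^\xi u^\xi_i = \CO(\exp)$. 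Since $\CL^\xi$ is self-adjoint, $\inner{\CL^\xi v}{u^\xi_i} = \inner{v}{\CL^\xi u^\xi_i} = \CO(\exp)\|v\|$, which is negligible. So the whole estimate reduces to controlling the nonlinear term $\inner{\CN^\xi(v)}{u^\xi_i}$.

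For the nonlinear term I would use $\CN^\xi(v) = -3 u^\xi v^2 - v^3$ (quartic potential) and $\|u^\xi_i\|_\infty = \CO(\eps^{-1})$ from Proposition~\ref{prop:uh} (the chain rule~\eqref{eq:multiAC:chainrule} does not change this order since $\|u^h_{N+1}\|_\infty = \CO(\eps^{-1})$ as well), together with $\|u^\xi\|_\infty = \CO(1)$, to bound
\[
\inner{\CN^\xi(v)}{u^\xi_i} \, = \, \int_0^1 \big( 3 u^\xi u^\xi_i v^2 - u^\xi_i v^3 \big) \, \mathrm{d}x
\, \leq \, C \eps^{-1}\big( \|v\|^2 + \|v\|_{L^3}^3 \big).
\]
Then I would interpolate $\|v\|_{L^3}^3 \leq \|v\| \, \|v\|_{L^4}^2$ by Hölder, so that the bracket is $\leq \|v\|^2 + \|v\|\,\|v\|_{L^4}^2$. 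Plugging in the hypotheses $\|v\| < \eps^{3/2+m}$ and $\|v\|_{L^4} < \eps^{3/4 + m/2 - \kappa}$ gives $\|v\|^2 < \eps^{3+2m}$ and $\|v\|\,\|v\|_{L^4}^2 < \eps^{3/2+m} \cdot \eps^{3/2 + m - 2\kappa} = \eps^{3 + 2m - 2\kappa}$, so the bracket is $\CO(\eps^{3+2m-2\kappa})$ and hence $\inner{\CN^\xi(v)}{u^\xi_i} = \CO(\eps^{-1}) \cdot \CO(\eps^{3+2m-2\kappa}) = \CO(\eps^{2+2m-2\kappa})$, which is the claimed bound. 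Note that here, unlike in Lemma~\ref{lem:ACestODE}, we do \emph{not} apply $A^{-1}$ (the statement bounds $\inner{u^\xi_i}{\CL(u^\xi+v)}$ directly, without the $S^{-1}_{ri}$ factors), so there is no extra power of $\eps$ gained or lost from the matrix inverse.

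The only genuinely delicate point is bookkeeping: one must make sure that passing from the $h$-derivatives to the $\xi$-derivatives via~\eqref{eq:multiAC:chainrule} does not spoil any of the $\eps$-orders used (in particular $\|u^\xi_i\|_\infty$, $\|u^\xi_i\|$, and the exponential smallness of the mixed inner products), and one should remark that the argument uses only (S1)--(S3) up to replacing $-3u^\xi v^2 - v^3$ by the general Taylor remainder $\CN^\xi(v)$, whose pointwise bound $|\CN^\xi(v)| \leq C(|v|^2 + |v|^3)$ follows from $f \in C^3$ and $\|u^\xi\|_\infty = \CO(1)$. I do not expect any real obstacle here; the lemma is a direct transcription of Lemma~\ref{lem:ACestODE} with the shifted radii, the factor $\eps^{-1}$ in front coming from $\|u^\xi_i\|_\infty$ and the two extra powers of $\eps$ in the radii producing the shift from $\eps^{2m+1-2\kappa}$ to $\eps^{2+2m-2\kappa}$.
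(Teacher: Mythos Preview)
Your proposal is correct and follows essentially the same route as the paper's proof: expand $\CL(u^\xi+v)$, dispose of $\CL(u^\xi)$ and $\inner{\CL^\xi v}{u^\xi_i}$ as $\CO(\exp)$ exactly as in Lemma~\ref{lem:ACestODE}, and then bound $\inner{\CN^\xi(v)}{u^\xi_i} \leq C\eps^{-1}(\|v\|^2 + \|v\|_{L^3}^3) \leq C\eps^{-1}(\|v\|^2 + \|v\|\,\|v\|_{L^4}^2)$ via H\"older, plugging in the new radii. Your additional remarks on the chain rule~\eqref{eq:multiAC:chainrule} not affecting $\|u^\xi_i\|_\infty$ and on the absence of an $A^{-1}$ factor are accurate bookkeeping; the paper simply says ``We follow the proof of Lemma~\ref{lem:ACestODE}'' and writes out only the nonlinear estimate with the shifted radii.
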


\begin{proof}
We follow the proof of Lemma~\ref{lem:ACestODE}.
Only for the nonlinearity we have to take the different radii into account. By Hölder's inequality we obtain
\begin{equation*}
\inner{\CN^\xi(v)}{ u^\xi_i}  
\, \leq \, C \eps^{-1} \left[ \| v \|^2 + \| v \|_{L^3}^3
\right] 
\, \leq \, C \eps^{-1} \left[ \| v \|^2 +
\| v \| \| v \|_{L^4}^2 \right] 
\, \leq \, C \eps^{2 + 2m -2 \kappa}. \qedhere
\end{equation*}
\end{proof}

In order to analyze the SDE governing the motion of kinks, it is more convenient to rewrite~\eqref{eq:dh}
in the Stratonovich sense. 
By leaving out It\^o corrections, Lemmata~\ref{lem:invertMC} and~\ref{lem:multiAC:inversemAC} imply
\begin{equation*}
\begin{split}
d\xi_r \, &= \, \sum_i A_{ri}^{-1} \inner{ \CL(u^\xi + v)}{u_i^\xi}  \, dt
+ \sum_i A_{ri}^{-1} \inner{u^\xi_i}{ \circ \, dW}  \\
&= \, \sum_i S_{ri}^{-1} \inner{ \CL(u^\xi + v)}{u_i^\xi } \, dt
+ \sum_i S_{ri}^{-1} \inner{ u^\xi_i}{ \circ \, dW} 
+ \CO( \eps^{ 4 + 3m } ) \, dt
+ \inner{ \CO_{L^2} ( \eps^{5/2 + m})} { \circ \, dW} \\
&= \, \CX^{-1} \eps \, \inner{ \CL(u^\xi + v)}{u_r^\xi } \, dt +  \CX^{-1} \eps \, \inner{u^\xi_r}{ \circ \, dW} \\
&\quad+ \frac{(-1)^r \eps}{ \CX (N+1)} \, \inner{ \CL(u^\xi + v)}{\sum_{i=1}^{N} (-1)^{i+1} u_i^\xi } \, dt
  + \frac{(-1)^r \eps}{ \CX (N+1)} \inner{\sum_{i=1}^{N} (-1)^{i+1} u_i^\xi}{ \circ \, dW} \\
&\quad+ \CO( \eps^{ 4 + 3m } )\, dt
+ \inner{ \CO_{L^2} ( \eps^{5/2 + m})} { \circ \, dW}.
\end{split}
\end{equation*} 
The first two summands (depending only on $u^\xi_r$) are similar to the non-massconserving case, but---due to
the mass constraint---we obtain additional terms, which do not only depend on the 
position~$\xi_r$ but rather on all positions $( \xi_1, \ldots, \xi_N)$.
To give a better understanding of this equation---especially of the additional terms---let us express it in the original $h$-coordinates. 
Recall that by chain rule $u^\xi_i \, = \, u^h_i + (-1)^{N-i} u^h_{N+1} + \CO(\exp).$
Thus we compute (ignoring exponentially small terms)
\begin{align}
\label{eq:xiandh}
u^\xi_r {}&+{} \frac{(-1)^r }{N+1}  \sum_{i=1}^N (-1)^{i+1} u^\xi_i 
\, = \, u^h_r + (-1)^{N-r} u^h_{N+1} + \frac{(-1)^{r+N+1} N }{N+1}  u^h_{N+1} 
+ \frac{(-1)^r }{N+1} \sum_{i=1}^N (-1)^{i+1}  u^h_i \nonumber  \\
&= \, u^h_r +(-1)^r u^h_{N+1} \left[ \frac{(-1)^{N+1} N }{N+1} - (-1)^{N+1}    \right] 
+ \frac{(-1)^r }{N+1} \sum_{i=1}^{N} (-1)^{i+1}  u^h_i \nonumber  \\
&= \, u^h_r + \frac{(-1)^r}{N+1} (-1)^{N} u^h_{N+1} + \frac{(-1)^r }{N+1} \sum_{i=1}^{N} (-1)^{i+1}  u^h_i 
= \, u^h_r + \frac{(-1)^r }{N+1} \sum_{i=1}^{N+1} (-1)^{i+1}  u^h_i.
\end{align}
Plugging this into the Stratonovich SDE yields
\begin{equation*}
\begin{split}
d\xi_r \, &= \, \|u^h_r\|^{-2} \, \inner{ \CL(u^h + v)}{u^h_r } \, dt +  \|u^h_r\|^{-2}  \, \inner{u^h_r}{ \circ \, dW} \\
&\quad + \frac{(-1)^r }{(N+1)} \sum_{i=1}^{N+1} (-1)^{i+1} \left[ \|u^h_i\|^{-2} \inner{ \CL(u^h + v)}{u^h_i} \, 
  + \frac{(-1)^r }{(N+1)} \|u^h_i\|^{-2} \inner{u^h_i}{ \circ \, dW}\right] \\
&\quad + \CO( \eps^{ 4 + 3m } ) \, dt
+ \inner{ \CO_{L^2} ( \eps^{5/2 + m})} { \circ \, dW}.
\end{split}
\end{equation*}
We observe that all the terms appearing in this formula are up to an exponentially small error the right-hand side of the equation for $dh$
(see~\eqref{eqAC:dhito} and~\eqref{eqAC:dA} with $A(h,v)$ a diagonal matrix).
Thus, we have
\begin{equation}
\label{eq:dxianddh}
d\xi_r \, \approx \, dh_r + \frac{(-1)^r }{(N+1)} \sum_{i=1}^{N+1} (-1)^{i+1} dh_i.
\end{equation}
Therefore, the kink motion for the mass conserving Allen--Cahn equation is approximately given by the independent 
motion of the position $h_r$, which is moving according to the non-massconserving case,                                                                                              
plus a weighted motion of all interface positions $(h_1, \, \ldots \, ,h_{N+1})$ that guarantees the conservation of mass.

\begin{remark}
In Theorem~\ref{thm:approxBB}, we proved that up to times of order $\eps \eta_{\eps}^{-1}$ the interface positions~$h(t)$ behave
approximately like the projection of the Wiener process onto the slow manifold~$\CM$. Using that
$dh_r \, \approx \,  \|u^h_r\|^{-2} \inner{u^h_r}{\circ \, dW}$
and plugging this into~\eqref{eq:dxianddh}, we obtain heuristically
\begin{equation}
\label{eq:multiAC:projectionmAC}
d\xi_r \, \approx \, \sum_{i} S_{ri}^{-1} \inner{u^\xi_i}{\circ \, dW},
\end{equation}
where we essentially used the identity~\eqref{eq:xiandh}. Since the matrix $S$ is given by $S_{ri} = \inner{u^\xi_r}{u^\xi_i}$, we expect that also the dynamics 
for the mass conserving Allen--Cahn equation behaves approximately like the projection of the Wiener process onto $\CM_\mu$.
Analogously to Theorem~\ref{thm:approxBB}, we could make this rigorous and estimate the error for a given time scale.
Opposed to the previous analysis of \eqref{eq:AC}, we cannot quite reach a good error estimate up the relevant time scale of 
order $\CO(\eps \eta_{\eps}^{-1})$, which corresponds to the time that a kink is likely to move by the order of $\eps$ (see Remark~\ref{rem:timescale}).
Basically, this deficiency stems from the worse spectral gap in Theorem~\ref{thm:gapMAC}, which leads to a smaller maximal noise strength that we can 
treat in our stability analysis. See Theorem~\ref{thm:multiAC:L4mac}, where we can allow only for $\eta_{\eps} \leq \eps^{4+2m-\kappa}$, and 
Theorem~\ref{stab:mainest} for the interplay between the spectral gap and the noise strength. 
For a reasonable result, we need that the error, which is linear in the time scale $T_\eps$, is smaller than the magnitude of the process $\xi$, which grows
like~$T_\eps^{1/2}$. 
In the case of the mass-conserving Allen--Cahn equation, we expect the following result to hold true but omit the details. 

\end{remark}

\begin{conj}
Let $\xi(t)$ be the solution to~\eqref{eq:dh} with $b$ and $\sigma$ given by~\eqref{eqAC:b} and~\eqref{eqAC:sigma} and$h$ replaced
by $\xi$. Furthermore, let $ \bar{\xi}(t)$ be the projection of the Wiener process $W$ onto the mass conserving manifold $\CM_\mu$
given by~\eqref{eq:multiAC:projectionmAC}.
For $m > 0$ and small $\kappa > 0$, define the exit time
\[
\tau \, \coloneqq \, \inf \left\{ t \geq 0 \; : \; \xi \notin \CA_{\rho_\eps} \quad \text{or} \quad \| v(t) \| > \eps^{3/2 + m} \quad \text{or} \quad
\| v(t) \|_{L^4} > \eps^{3/4 + m/2 - \kappa} \right\}
\]
Then, for $T_\eps \leq c \eps \eta_{\eps}^{-1} \wedge \tau$, we obtain
\[
\E \sup_{0 \leq t \leq T_\eps} \vert \xi(t) - \bar{\xi}(t) \vert \, \leq \, c \left[ \eta_{\eps} + \eps^{3+2m-2\kappa} \right] T_\eps.
\]
\end{conj}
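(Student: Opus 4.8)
Following the proof of Theorem~\ref{thm:approxBB}, the plan is to run the same comparison argument in the mass conserving coordinates, replacing the diagonal inverse of Lemma~\ref{lem:matrixAC} by the matrices $S^{-1}$ and $A^{-1}$ of Lemmata~\ref{lem:invertMC} and~\ref{lem:multiAC:inversemAC}, and keeping track of the smaller admissible radii forced by the $\CO(\eps)$ spectral gap of Theorem~\ref{thm:gapMAC}. First I would pass from the Stratonovich equation for $\xi$ displayed before~\eqref{eq:dxianddh} and from the projection~\eqref{eq:multiAC:projectionmAC} to their It\^o forms. The It\^o--Stratonovich corrections of the two equations differ, and, exactly as in Theorem~\ref{thm:approxBB}, their difference is collected into a drift term $I_r(\bar\xi)$; using $S^{-1}=\CO(\eps)$ (Lemma~\ref{lem:invertMC}), $\|u^\xi_i\|=\CO(\eps^{-1/2})$, $\|u^\xi_{ij}\|=\CO(\eps^{-3/2})$ (Proposition~\ref{prop:uh} together with the chain rule~\eqref{eq:multiAC:chainrule}) and $\|\sigma_r\|=\CO(\eps^{1/2})$, the estimates of Lemma~\ref{lem:estdA} carry over and give $|I_r(\bar\xi)|=\CO(\eta_\eps)$.

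With this I would write, for $t\le T_\eps$,
\[
\xi_r(t)-\bar\xi_r(t)\;=\;\int_0^t\Big[\sum_i S_{ri}^{-1}\inner{u^\xi_i}{\CL(u^\xi+v)}+I_r(\bar\xi)+R_r(s)\Big]\,\mathrm ds\;+\;\int_0^t\inner{\Delta_r}{\mathrm dW},
\]
where $\Delta_r=\sigma_r(\xi,v)-\bar\sigma_r(\bar\xi)$ with $\bar\sigma_r(\bar\xi)=\sum_i S_{ri}^{-1}(\bar\xi)u^{\bar\xi}_i$, and $R_r$ gathers the lower-order remainders produced by replacing $A^{-1}$ by $S^{-1}$ (the $\CO(\eps^{4+3m})$ and $\CO_{L^2}(\eps^{5/2+m})$ terms displayed before~\eqref{eq:dxianddh}). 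For the drift, Lemma~\ref{lem:estODE} gives $|\inner{u^\xi_i}{\CL(u^\xi+v)}|\le C\eps^{2+2m-2\kappa}$, so together with $S^{-1}=\CO(\eps)$ the first sum is $\CO(\eps^{3+2m-2\kappa})$; combined with $|I_r(\bar\xi)|=\CO(\eta_\eps)$ the whole drift contributes at most $C[\eta_\eps+\eps^{3+2m-2\kappa}]T_\eps$ to $\E\sup_{t\le T_\eps}|\xi-\bar\xi|$.

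For the martingale term I would split $\Delta_r=[\sigma_r(\xi,v)-\bar\sigma_r(\xi)]+[\bar\sigma_r(\xi)-\bar\sigma_r(\bar\xi)]$: the first bracket is $\CO_{L^2}(\eps^{m+1/2})$ by Lemma~\ref{lem:multiAC:inversemAC} and $\|u^\xi_i\|=\CO(\eps^{-1/2})$, while the second is controlled by a Lipschitz estimate in $\xi$ with constant $\CO(\eps^{-1/2})$ — obtained by differentiating $\bar\sigma_r=\sum_i S_{ri}^{-1}u^\xi_i$ and using that $S$, hence $S^{-1}$, is constant up to exponentially small terms (Lemma~\ref{lem:maininverse}), so that only $S^{-1}\partial_\xi u^\xi_i=\CO(\eps)\cdot\CO(\eps^{-3/2})$ survives; this is the mass conserving analogue of Lemma~\ref{lem:lipschitz}, whose proof transfers as indicated in the remark preceding this statement. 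Burkholder's inequality together with $\inner{\Delta_r}{\CQ\Delta_r}\le\eta_\eps\|\Delta_r\|^2$ then produces, exactly as in the proof of Theorem~\ref{thm:approxBB}, a contribution of the form $C\eta_\eps^{1/2}\eps^{-1/2}T_\eps^{1/2}\cdot\E\sup_{t\le T_\eps}|\xi-\bar\xi|$ plus a remainder $C\eta_\eps^{1/2}\eps^{m+1/2}T_\eps^{1/2}$. Since $T_\eps\le c\eps\eta_\eps^{-1}$, the prefactor $\eta_\eps^{1/2}\eps^{-1/2}T_\eps^{1/2}\le c^{1/2}<1$ for $c$ small, so the first contribution is absorbed into the left-hand side (after the same reduction to $\E\sup|\cdot|$ as in Theorem~\ref{thm:approxBB}), while the remainder is of order $\eps^{1+m}$ on the relevant time scale and hence of lower order. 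Collecting the drift and martingale bounds yields $\E\sup_{t\le T_\eps}|\xi(t)-\bar\xi(t)|\le c[\eta_\eps+\eps^{3+2m-2\kappa}]T_\eps$.

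The main obstacle — and the reason this is stated only as a conjecture — is twofold. On the technical side, the off-diagonal block $\tfrac1{N+1}(-1)^{k+j+1}$ of $S^{-1}$ couples all $N$ interfaces, so the Gronwall/absorption step must be run for the full vector $\xi-\bar\xi$ and every cross-term in the (lengthy but routine) computation of the drift $b$ and of its Lipschitz constant must be tracked with the correct power of $\eps$; this is precisely the calculation the paper omits, now made heavier by the coupling. On the conceptual side, the genuinely small $\CO(\eps)$ spectral gap of Theorem~\ref{thm:gapMAC} forces the stability radii $\|v\|<\eps^{3/2+m}$, $\|v\|_{L^4}<\eps^{3/4+m/2-\kappa}$ and the admissible noise strength $\eta_\eps\le\eps^{4+2m-\kappa}$ (Theorem~\ref{thm:multiAC:L4mac}) to be so small that the $T_\eps$-linear error $[\eta_\eps+\eps^{3+2m-2\kappa}]T_\eps$ fails to be smaller than the $T_\eps^{1/2}$ fluctuation size of $\xi$ up to the relevant time $T_\eps\sim\eps\eta_\eps^{-1}$; removing this gap would require a sharper stability theorem (a larger admissible $\eta_\eps$, hence an improvement of Theorem~\ref{thm:gapMAC}) rather than a refinement of the argument above.
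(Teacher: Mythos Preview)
The paper does not prove this statement: it is explicitly labeled a \emph{conjecture}, with the surrounding remarks only indicating that ``analogously to Theorem~\ref{thm:approxBB}, we could make this rigorous'' and then pointing out the two obstacles (the near-convexity of $\CA_{\rho_\eps}$ needed for the Lipschitz estimate, and the fact that the $\CO(\eps)$ spectral gap forces radii and noise strength so small that the $T_\eps$-linear error is not dominated by the $T_\eps^{1/2}$ fluctuation size on the relevant time scale). Your proposal is precisely the adaptation the paper has in mind --- replacing the diagonal $A^{-1}$ of Lemma~\ref{lem:matrixAC} by $S^{-1}$ and $A^{-1}$ from Lemmata~\ref{lem:invertMC} and~\ref{lem:multiAC:inversemAC}, invoking Lemma~\ref{lem:estODE} for the drift, and rerunning the Burkholder/absorption step of Theorem~\ref{thm:approxBB} --- and you correctly flag exactly the same two obstructions the paper raises. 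There is nothing further to compare against.
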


\begin{remark}
In the proof of the analogous result in Theorem~\ref{thm:approxBB}, it was crucial to explicitly know the Lipschitz constant of the map $\xi \mapsto \sigma(\xi,v)$
provided $v$ is sufficiently small. In establishing the Lipschitz continuity (Lemma~\ref{lem:lipschitz}), we relied on the convexity of the set of admissible
interface positions. While this is straightforward for the set $\Omega_{\rho_\eps}$, this is not quite true in the mass conserving case, where the set of admissible 
positions is given by 
\[
\CA_{\rho_\eps} \, = \, \Big\{ (h_1, \, \ldots \,h_{N},\, h_{N+1}(h_1, \, \ldots \,h_{N}) ) \in \Omega_{\rho_\eps} \, : \, (h_1, \, \ldots \, ,h_N)\in[0,1]^N \Big\}.
\]
 By Lemma~\ref{lem:implfct}, the map $h_{N+1}$ is explicitly given by
\[
h_{N+1}(h_1, \, \ldots \, , h_N) \, = \, \sum_{i=1}^N (-1)^{N-i} h_i + c(\mu) + \CO(\exp),
\]
where we have to introduce a constant $c(\mu)$ depending only on the mass $\mu$. With this expression, one readily computes that
$h_{N+1}( \xi + \lambda( \xi - \bar{\xi}) = h_{N+1}(\xi) + \lambda h_{N+1}( \xi - \bar{\xi}) + \CO(\exp)$ for any~${\xi, \bar{\xi} \in \R^N}$ and~$\lambda \in (0,1)$.
Combined with the convexity of $\Omega_{\rho_\eps}$, this shows that the set $\CA_{\rho_\eps}$ is not exactly convex, but the error is exponentially small.
We obtain the following result:
\[
h, \bar{h} \in \CA_{\rho_\eps} \implies \lambda h + (1-\lambda) \bar{h} \in \CA_{2 {\rho_\eps}} \quad \forall \lambda \in (0,1).
\]
With this property at hand, we expect to bound the Lipschitz constant in the mass conserving case. For some of the technical details, we follow closely 
the proof of  Lemma~\ref{lem:lipschitz}.
\end{remark}

\section{Stochastic Stability}
\label{sec:stab}

In this section, we discuss stochastic stability, both for ~\eqref{eq:AC} and~\eqref{eq:mAC}. The first part is concerned with establishing stability in $L^2$ 
which is crucial for defining the Fermi coordinates (cf.~Definition~\ref{def:Fermi}). Note that this is not sufficient for the analysis of the SDE, where we additionally assumed 
that $v$ is small in~$L^4$ in order to handle the nonlinear terms. Hence, the second part of this section is devoted to stochastic stability in $L^4$. The underlying problem is to prove 
that the normal component $v$ measuring the distance to the slow manifold, 
which satisfies the stochastic PDE
\[
dv \, = \, \left[ \CL(u^h) + \CL^h v + \CN^h(v) \right] dt + dW
- \sum_j u^h_j dh_j - \frac12 \sum_{i,j} u_{ij}^h \inner{\CQ \sigma_i}
{\sigma_j} \, dt,
\]
remains small in various norms. For this purpose, we aim to show a stochastic differential inequality of the type
\begin{equation}
\label{stab:mainest}
d \Vert v \Vert \, = \, - a_\eps \Vert v \Vert \, dt + \CO(K_\eps) \, dt + \langle \CO( c_\eps \Vert v \Vert^\alpha), \, dW_\eps \rangle
\end{equation}
for some positive $\eps$-dependent constants $a_\eps$ and $K_\eps$. With \eqref{stab:mainest} at hand, one can
prove the following theorem which serves as our main tool in the upcoming stability discussions. For more details and a proof, we refer to \cite{ABK12}. Here, this method was used 
for the stability analysis of the one-dimensional stochastic Cahn--Hilliard equation. See also \cite{ABBK15, BS20, SchindlerPhD}.

\begin{theorem}
\label{meta:stability}
Define the stopping time
\[
\tau^\ast \, = \, \inf \left\{ t \in [0, T_\eps \wedge \tau_0 ] \, : \, \Vert v(t) \Vert > R_\eps \right\},
\]
where the deterministic cut-off $T_\eps$ satisfies $T_\eps = \eps^{-M}$ for any fixed large $M>0$ and $\tau_0$ denotes the first exit time
from $\Omega_{\rho_\eps}$, the set of admissible kink positions. Assume that for $ t \leq \tau^\ast$ equation \eqref{stab:mainest} is satisfied with
some positive constants $a_\eps, K_\eps, c_\eps$ and $\alpha$. Furthermore, assume that for some small $\kappa > 0$
\[
\frac{K_\eps + c_\eps^2 \eta_{\eps} R_\eps^{2 \alpha - 1}}{a_\eps R_\eps} \, = \, \CO(\eps^\kappa) \quad \text{and} \quad \Vert v(0) \Vert \, \leq \, \frac{K_\eps + c_\eps^2 \eta_{\eps} R_\eps^{2 \alpha - 1}}{a_\eps}.
\]
Then, the probability $\prob(\tau^\ast < T_\eps \wedge \tau_0)$ is smaller than any power of $\eps$, as $\eps$ tends to zero.
\end{theorem}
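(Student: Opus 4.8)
The statement is a soft stability estimate: granted the scalar It\^o inequality \eqref{stab:mainest} for the nonnegative process $\|v(t)\|$ up to $\tau^\ast$, the restoring drift $-a_\eps\|v\|$ confines $\|v\|$ far below the cut-off $R_\eps$ on the whole (polynomially long) horizon $[0,T_\eps\wedge\tau_0]$, except on an event of probability smaller than every power of $\eps$. The plan is to follow the scheme of \cite{ABK12}: (i) remove the drift by an integrating factor and reduce to a supremum bound for an Ornstein--Uhlenbeck-type martingale smoothing; (ii) slice the long time horizon into windows of length $\sim 1/a_\eps$ and apply an exponential martingale inequality on each, with a polynomial union bound.

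\textbf{Step 1: integrating factor.} Working up to $\tau^\ast$, where $\|v\|\le R_\eps$ and hence the diffusion coefficient in \eqref{stab:mainest} has $L^2$-norm of order $c_\eps R_\eps^\alpha$, I would multiply \eqref{stab:mainest} by $e^{a_\eps t}$; since $d(e^{a_\eps t}\|v\|)=e^{a_\eps t}(d\|v\|+a_\eps\|v\|\,dt)$ the restoring term cancels and, after dividing back by $e^{a_\eps t}$,
\[
\|v(t)\| \,\le\, e^{-a_\eps t}\|v(0)\| \,+\, \int_0^t e^{-a_\eps(t-s)}\,\CO(K_\eps)\,ds \,+\, N(t),
\qquad N(t)\coloneqq\int_0^t e^{-a_\eps(t-s)}\,d\tilde M(s),
\]
where $\tilde M(t)=\int_0^t\langle\CO(c_\eps\|v\|^\alpha),dW_\eps\rangle$ is a martingale with $d[\tilde M]_s\le C c_\eps^2\eta_\eps R_\eps^{2\alpha}\,ds$ on $\{s\le\tau^\ast\}$ (to pass from the inequality \eqref{stab:mainest} to this one cleanly one first compares $\|v\|$ with the scalar diffusion driven by the same noise, which is routine). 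By the two hypotheses, $\|v(0)\|=\CO(\eps^\kappa R_\eps)$ and $\int_0^t e^{-a_\eps(t-s)}\CO(K_\eps)\,ds\le C K_\eps/a_\eps=\CO(\eps^\kappa R_\eps)$, so the deterministic part of $\|v(t)\|$ stays below $\tfrac18 R_\eps$ for $\eps$ small. Since $\tau^\ast\le T_\eps$ and $\|v(\tau^\ast)\|=R_\eps$ on $\{\tau^\ast<T_\eps\wedge\tau_0\}$ by continuity, it follows that $\{\tau^\ast<T_\eps\wedge\tau_0\}\subseteq\{\sup_{t\le\tau^\ast}|N(t)|\ge\tfrac14 R_\eps\}$, and it remains to bound the latter.

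\textbf{Step 2: slicing and the exponential inequality.} Partition $[0,T_\eps]$ into $\lceil a_\eps T_\eps\rceil$ windows $[t_j,t_{j+1}]$ of length $1/a_\eps$. On each one, $N(t)=e^{-a_\eps(t-t_j)}N(t_j)+e^{-a_\eps t}L_j(t)$ with $L_j(t)=\int_{t_j}^t e^{a_\eps s}\,d\tilde M(s)$ and, on $\{t_{j+1}\le\tau^\ast\}$, $[L_j]_{t_{j+1}}\le C c_\eps^2\eta_\eps R_\eps^{2\alpha}e^{2a_\eps t_{j+1}}/(2a_\eps)$. The exponential (Bernstein) inequality for a martingale with deterministically bounded quadratic variation gives, for a small constant $c_0$,
\[
\prob\Big(\sup_{t\in[t_j,t_{j+1}]}e^{-a_\eps t}|L_j(t)|\ge c_0R_\eps\Big)
\,\le\, 2\exp\!\Big(-\tfrac{c_0^2e^{-2}}{C}\cdot\tfrac{a_\eps R_\eps^{2-2\alpha}}{c_\eps^2\eta_\eps}\Big)
\,\le\, 2\exp(-c_1\eps^{-\kappa}),
\]
since the first hypothesis reads exactly $c_\eps^2\eta_\eps R_\eps^{2\alpha-2}/a_\eps=\CO(\eps^\kappa)$, that is $a_\eps R_\eps^{2-2\alpha}/(c_\eps^2\eta_\eps)\gtrsim\eps^{-\kappa}$. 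On the complement of these bad events the recursion $|N(t_{j+1})|\le e^{-1}|N(t_j)|+c_0R_\eps$, started from $N(0)=0$, keeps $|N(t_j)|\le\tfrac{c_0}{1-e^{-1}}R_\eps$ for all $j$ and $|N(t)|<\tfrac14 R_\eps$ throughout, once $c_0$ is chosen small enough. A union bound over the $\lceil a_\eps T_\eps\rceil+1=\CO(\eps^{-P})$ windows (here $a_\eps$ is bounded below, and $T_\eps=\eps^{-M}$ above, by fixed powers of $\eps$) then yields
\[
\prob(\tau^\ast<T_\eps\wedge\tau_0)\,\le\,\CO(\eps^{-P})\exp(-c_1\eps^{-\kappa}),
\]
which is smaller than any power of $\eps$ as $\eps\to0$.

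\textbf{Main obstacle.} The delicate point is precisely that $T_\eps=\eps^{-M}$ may be an arbitrarily large inverse power of $\eps$: a single application of Doob's or Burkholder's inequality on all of $[0,T_\eps]$ would involve the quadratic variation $c_\eps^2\eta_\eps R_\eps^{2\alpha}T_\eps$ and be useless, so the restoring drift must be exploited to localize the noise to windows of length $\sim 1/a_\eps$. This is what forces the ``smaller than any power of $\eps$'' conclusion rather than an exponentially small bound, since a polynomial-in-$\eps^{-1}$ number of windows is unavoidable (but harmless against the factor $\exp(-c_1\eps^{-\kappa})$). Everything else --- the comparison turning \eqref{stab:mainest} into a tractable equation, verifying the Gaussian tail of $e^{-a_\eps t}L_j$ uniformly in $j$, and closing the recursion with a fixed margin --- is bookkeeping.
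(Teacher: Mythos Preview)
Your proposal is correct and follows essentially the same route as the paper, which does not give its own argument for this theorem but refers to \cite{ABK12}; the integrating-factor reduction, the slicing of $[0,T_\eps]$ into windows of length $\sim 1/a_\eps$, the exponential martingale (Bernstein) inequality on each window, and the polynomial union bound are exactly the ingredients of that reference. One cosmetic slip: in your parenthetical justification that $\lceil a_\eps T_\eps\rceil=\CO(\eps^{-P})$, what you actually need is that $a_\eps$ is bounded \emph{above} by a fixed power of $\eps^{-1}$ (so the number of windows stays polynomial), not below; in the applications of the paper $a_\eps$ is $\CO(1)$ or $\CO(\eps)$, so both hold and nothing is lost.
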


\subsection{\texorpdfstring{$L^2$-Stability for \eqref{eq:AC}} {L2-Stability for (AC)}}

We start with the analysis of~\eqref{eq:AC} without mass conservation. Crucial for establishing stochastic stability is the following theorem,
which relies on the spectral gap derived in Theorem~\ref{thm:gapAC}. As long as the $L^2$-norm of the normal component $v$ stays
sufficiently small, the nonlinear term does not destroy the spectral estimate.

\begin{theorem}
\label{thm:nonlinAC}
Let $u^h \in \CM$ and $v \perp u_i^h, \,\, i=1, \ldots, N+1$. Assume that
 $\|v\| < \eps^{1/2 + m}$ for some~$m > 0$. Then, for $\lambda_0$ the constant given in the spectral bound of Theorem~\ref{thm:gapAC},  we obtain
\[
 \langle \CL^h v + \CN^h(v), v \rangle \, \leq \, - \frac12 \lambda_0 \, 
 \Vert v \Vert^2.
\]
\end{theorem}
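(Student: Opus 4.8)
The plan is to bound the nonlinear contribution $\langle\CN^h(v),v\rangle$ and absorb it into the negative quadratic form furnished by Theorem~\ref{thm:gapAC}. Working with the quartic potential we have $\CN^h(v)=-3u^h v^2-v^3$, hence
\[
\langle\CN^h(v),v\rangle \;=\; -3\langle u^h,v^3\rangle-\|v\|_{L^4}^4 \;\le\; 3\,\|u^h\|_\infty\,\|v\|_{L^3}^3 ,
\]
where the nonpositive quartic term is discarded and $\|u^h\|_\infty=\CO(1)$ by Proposition~\ref{prop:uh}. The one structural ingredient still available is the diffusive term inside $\langle\CL^h v,v\rangle=-\eps^2\|v_x\|^2-\langle f'(u^h)v,v\rangle$, since the hypothesis only controls $\|v\|$ and not $\|v\|_{L^3}$ or $\|v_x\|$; the whole argument has to be set up so that $-\eps^2\|v_x\|^2$ pays for the cubic term.

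First I would interpolate: by the one-dimensional Gagliardo--Nirenberg inequality on $(0,1)$,
\[
\|v\|_{L^3}^3 \;\le\; C\,\|v\|^{5/2}\|v\|_{H^1}^{1/2} \;\le\; C\,\|v\|^{5/2}\bigl(\|v_x\|^{1/2}+\|v\|^{1/2}\bigr).
\]
To the leading term $\|v\|^{5/2}\|v_x\|^{1/2}$ I apply Young's inequality with exponents $\tfrac43$ and $4$ and a weight of order $\eps^{-1/2}$ --- the scaling is not free, it is precisely the one dictated by the admissible radius --- giving, for any prescribed $\theta\in(0,1)$,
\[
\|v\|^{5/2}\|v_x\|^{1/2} \;\le\; \theta\,\eps^2\|v_x\|^2+C_\theta\,\eps^{-2/3}\|v\|^{10/3}.
\]
Since $\|v\|<\eps^{1/2+m}$ one has $\eps^{-2/3}\|v\|^{10/3}=\eps^{-2/3}\|v\|^{4/3}\|v\|^2\le\eps^{4m/3}\|v\|^2$ and, likewise, $\|v\|^{5/2}\|v\|^{1/2}=\|v\|\cdot\|v\|^2\le\eps^{1/2+m}\|v\|^2$, so that
\[
\langle\CN^h(v),v\rangle \;\le\; C\theta\,\eps^2\|v_x\|^2+\CO\!\bigl(\eps^{4m/3}\bigr)\|v\|^2 .
\]

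It then remains to absorb the gradient term. Using $f'(u^h)=3(u^h)^2-1\ge-1$, hence $-\langle f'(u^h)v,v\rangle\le\|v\|^2$, one can write for $C\theta<1$
\[
\langle\CL^h v,v\rangle+C\theta\,\eps^2\|v_x\|^2 \;=\; (1-C\theta)\langle\CL^h v,v\rangle-C\theta\langle f'(u^h)v,v\rangle \;\le\; (1-C\theta)\langle\CL^h v,v\rangle+C\theta\|v\|^2 ,
\]
and then the spectral gap $\langle\CL^h v,v\rangle\le(-\tfrac12\lambda_0+\CO({\rho_\eps}^2))\|v\|^2$ of Theorem~\ref{thm:gapAC} yields
\[
\langle\CL^h v+\CN^h(v),v\rangle \;\le\; \Bigl[(1-C\theta)\bigl(-\tfrac12\lambda_0+\CO({\rho_\eps}^2)\bigr)+C\theta+\CO(\eps^{4m/3})\Bigr]\|v\|^2 .
\]
Choosing $\theta$ small (depending only on $\lambda_0$) and afterwards $\eps$ small forces the bracket below $-\tfrac12\lambda_0$ up to a negligible $\CO(\eps^{\kappa})$; to land on the clean constant one instead keeps, in the proof of Theorem~\ref{thm:gapAC}, the strictly negative leftover $-c\,\eps^2\|v_x\|^2$ that it actually produces and lets it soak up $C\theta\,\eps^2\|v_x\|^2$ with room to spare. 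The only genuine obstacle is this $\eps$-bookkeeping: with merely an $L^2$-bound on $v$ one is forced to spend the diffusive term on the nonlinearity, and the interpolation and Young weights must be tuned so that the nonlinear contribution to the coefficient of $\|v\|^2$ is a true $o(1)$ rather than an $\CO(1)$ perturbation of the gap --- which is exactly why the radius is taken of order $\eps^{1/2+m}$ (matching $\|u^h_i\|\sim\eps^{-1/2}$).
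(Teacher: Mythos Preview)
Your approach is essentially the paper's: bound $\langle\CN^h(v),v\rangle$ by $C\|v\|_{L^3}^3$, interpolate via Gagliardo--Nirenberg, use Young's inequality to produce a $\theta\,\eps^2\|v_x\|^2$ term, and absorb it by rewriting a fraction of $\langle\CL^h v,v\rangle$ in its explicit form $-\eps^2\|v_x\|^2-\langle f'(u^h)v,v\rangle$. Your algebraic identity
\[
\langle\CL^h v,v\rangle+C\theta\,\eps^2\|v_x\|^2=(1-C\theta)\langle\CL^h v,v\rangle-C\theta\langle f'(u^h)v,v\rangle
\]
is exactly the paper's convex splitting with $\gamma_1=1-C\theta$, $\gamma_2=C\theta$.

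The one place where you diverge is the choice of $\theta$. With $\theta$ fixed (depending only on $\lambda_0$) you are left with an $\CO(\theta)$ defect in the constant, and your proposed repair --- harvesting a leftover $-c\,\eps^2\|v_x\|^2$ from the proof of Theorem~\ref{thm:gapAC} --- is not supported by that theorem as stated. The paper resolves this cleanly by letting the weight go to zero with $\eps$: it takes $\gamma_2=\eps^m$, so the Young constant becomes $C\eps^{-2/3}\gamma_2^{-1/3}\|v\|^{4/3}=\CO(\eps^m)$ and the potential term $\gamma_2\|f'(u^h)\|_\infty=\CO(\eps^m)$ as well. Then the whole loss is $\CO(\eps^m)\|v\|^2$, and no appeal to unrecorded slack in the spectral estimate is needed. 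Simply take $\theta=\eps^m$ in your argument and it matches the paper exactly.
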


\begin{proof}
Let $v \perp u_i^h \;\; \forall i = 1,\ldots, N+1$.
By the main spectral result of Theorem~\ref{thm:gapAC}, we have 
\[
\langle \CL^h v, v \rangle \, \leq \, - \lambda_0 \Vert v \Vert^2.
\]
Therefore, for $\gamma_1, \gamma_2 > 0$ with $\gamma_1 + \gamma_2 = 1$, we compute
\begin{equation}
\label{pr:nonlinAC1}
\begin{split}
\langle \CL^h v, v \rangle 
\, &\leq \, - \gamma_1 \lambda_0  \Vert v \Vert^2 + \gamma_2 \eps^2 \int_0^1
v_{xx} \, v \, \mathrm{d}x + \gamma_2 \int_0^1 f'(u^h) v^2 \, \mathrm{d}x  \\
&\leq \, - \gamma_1 \lambda_0 \Vert v \Vert^2
- \eps^2 \gamma_2 \Vert v_x \Vert^2 +
\gamma_2 \Vert f'(u^h) \Vert_{L^\infty} \Vert v \Vert^2.
\end{split}
\end{equation}
By Gagliardo–Nirenberg and Young`s inequality we obtain
\begin{equation}
\label{pr:nonlinAC2}
\begin{split}
\langle \CN^h(v) , v \rangle \, &= \, \int_0^1 3 (u^h)^2 v^3 - v^4 
\, \leq \, 3 \Vert v \Vert_{L^3}^3 \, \leq \, C \Vert v_x \Vert^{1/2} \Vert v \Vert^{5/2}  \\
&\leq \, \eps^2 \gamma_2 \Vert v_x \Vert^2 + C\eps^{-2/3} \gamma_2 ^{-1/3}
\Vert v \Vert^{4/3} \Vert v \Vert^2,
\end{split}
\end{equation}
where we interpolated the $L^3$-norm between $H^1$ and $L^2$. Combining~\eqref{pr:nonlinAC1} and~\eqref{pr:nonlinAC2} yields
\begin{equation*}
\begin{split}
\langle \CL^h v + \CN^h(v) , v \rangle 
\, &\leq \, - \gamma_1 \lambda_0 \Vert v \Vert^2
+ \left[ \gamma_2 \Vert f'(u^h) \Vert_{L^\infty} 
+ C\eps^{-2/3} \gamma_2 ^{-1/3} \Vert v \Vert^{4/3} \right]
\Vert v \Vert^2 \\
&= \, \left[ - \lambda_0 + \gamma_2 \lambda_0 + \gamma_2 \Vert f'(u^h) \Vert_{L^\infty} 
+ C\eps^{-2/3} \gamma_2 ^{-1/3} \Vert v \Vert^{4/3} \right]
\Vert v \Vert^2. 
\end{split}
\end{equation*}
Fixing $\gamma_2 = \eps^m$, we obtain for 
$\|v \| < \eps^{1/2 + m} $ 
\begin{equation*}
\inner{\CL^h v + \CN^h(v)}{v} \, \leq \,
\left[ -  \lambda_0 + 
\eps^m \, \left( \lambda_0 + \| f'(u^h) \|_\infty \right) + C \eps^m \right] \| v \|^2. \qedhere
\end{equation*}

\end{proof}

As a next step, we need to analyze the remaining terms of $d \|v\|^2$. We show that, provided$\| v \|$ is 
sufficiently small, they are of order $\CO(\eta_{\eps})$.
\begin{lemma}
\label{lem:remAC}
Under the same assumptions as in Theorem~\ref{thm:nonlinAC}, we obtain
\[
\inner{\CL(u^h)}{v} \, dt - \frac12 \sum_{i,j} \inner{u^h_{ij}}{v}
\inner{\CQ \sigma_i}{\sigma_j} \, dt + \inner{dv}{dv} \, = \, \CO(\eta_{\eps})\, dt.
\]
\end{lemma}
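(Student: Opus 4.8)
The plan is to estimate the three contributions separately, using Proposition~\ref{prop:uh} for the norms and supports of the $u^h_{ij}$, the bound $\sigma_j = \CO_{L^2}(\eps^{1/2})$ from the proof of Lemma~\ref{lem:estdA}, and the smallness assumption $\|v\| < \eps^{1/2+m}$ together with the orthogonality $v \perp u^h_i$. First I would dispose of the term $\inner{\CL(u^h)}{v}$: by \eqref{eq:ACuh} we have $\CL(u^h) = \CO(\exp)$ uniformly, so Cauchy--Schwarz gives $|\inner{\CL(u^h)}{v}| \le \CO(\exp)\|v\| = \CO(\exp)$, which is certainly $\CO(\eta_\eps)$ since $\eta_\eps$ is only polynomially small in $\eps$ (it will be fixed of order $\eps^{1+2m}$ in the main theorem, but any polynomial order suffices here).

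\textbf{The quadratic-variation term.} For $\inner{dv}{dv}$ I would use the stochastic PDE satisfied by $v$: its only martingale part is $dW$ (the correction terms $-\sum_j u^h_j dh_j$ and the It\^o-correction $dt$-term contribute no noise beyond what is already in $dW$, once one recalls $dh$ has diffusion $\langle\sigma,dW\rangle$ with $\sigma = \CO_{L^2}(\eps^{1/2})$). Thus $\inner{dv}{dv}$ reduces, up to lower-order pieces, to $\trace(\CQ)\,dt = \eta_\eps\,dt$ coming from $\inner{dW}{dW}$, plus cross terms and the contribution $\sum_j \|u^h_j\|^2 \inner{\sigma_j}{\CQ\sigma_j}\,dt$-type expressions; using $\|u^h_j\|=\CO(\eps^{-1/2})$ and $\|\sigma_j\|=\CO(\eps^{1/2})$ these are $\CO(\eta_\eps)\,dt$ as well. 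So $\inner{dv}{dv} = \CO(\eta_\eps)\,dt$.

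\textbf{The double-sum term.} For $\frac12\sum_{i,j}\inner{u^h_{ij}}{v}\inner{\CQ\sigma_i}{\sigma_j}$ I would note that by Proposition~\ref{prop:uh} the mixed second derivatives $u^h_{ij}$ with $i\neq j$ are $\CO(\exp)$, so only the diagonal $i=j$ survives: the term is $\frac12\sum_k \inner{u^h_{kk}}{v}\inner{\CQ\sigma_k}{\sigma_k} + \CO(\exp)$. Then Cauchy--Schwarz with $\|u^h_{kk}\| = \CO(\eps^{-3/2})$, $\|v\|<\eps^{1/2+m}$, $\|\CQ\|\le\eta_\eps$, and $\|\sigma_k\|=\CO(\eps^{1/2})$ gives each summand bounded by $C\eps^{-3/2}\cdot\eps^{1/2+m}\cdot\eta_\eps\cdot\eps = C\eps^{1/2+m}\eta_\eps = \CO(\eta_\eps)$. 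Summing over the fixed number $N+1$ of indices keeps this $\CO(\eta_\eps)\,dt$.

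\textbf{Main obstacle.} The only genuinely delicate point is the precise accounting of $\inner{dv}{dv}$: one must carefully identify which terms in the SDE for $v$ are martingale terms before squaring, and in particular make sure the $dh$-driven piece $-\sum_j u^h_j\,dh_j$ is correctly expanded via $dh = b\,dt + \langle\sigma,dW\rangle$, so that its quadratic variation $\sum_{i,j}\inner{u^h_i}{u^h_j}\inner{\sigma_i}{\CQ\sigma_j}\,dt$ is seen to be $\CO(\eps^{-1}\cdot\eps\cdot\eta_\eps) = \CO(\eta_\eps)\,dt$ and the cross-variation with $dW$ likewise. Everything else is routine Cauchy--Schwarz bookkeeping with the estimates already collected in Proposition~\ref{prop:uh} and Lemma~\ref{lem:estdA}.
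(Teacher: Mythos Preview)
Your proposal is correct and follows essentially the same route as the paper: dispose of $\inner{\CL(u^h)}{v}$ via $\CL(u^h)=\CO(\exp)$, bound the double sum by Cauchy--Schwarz using $\|u^h_{ij}\|=\CO(\eps^{-3/2})$, $\|\sigma_i\|=\CO(\eps^{1/2})$, $\|\CQ\|\le\eta_\eps$, and expand $\inner{dv}{dv}$ from the martingale part $dW-\sum_j u^h_j\inner{\sigma_j}{dW}$ to get $\eta_\eps\,dt$ plus $\CO(\eta_\eps)$ cross and square terms. Two cosmetic remarks: the diagonal reduction for the double sum is correct but unnecessary (the crude $\CO(\eps^{-3/2})$ bound already suffices for all $i,j$), and your arithmetic $\eps^{-3/2}\cdot\eps^{1/2+m}\cdot\eta_\eps\cdot\eps$ actually equals $\eps^{m}\eta_\eps$, not $\eps^{1/2+m}\eta_\eps$---harmless, since both are $\CO(\eta_\eps)$.
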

\begin{proof}
We have $\CL(u^h) = \CO(\exp)$ and, as $\|v\| < \eps^{1/2 + m}$,
\[
\inner{u^h_{ij}}{v} \inner{\CQ \sigma_i}{\sigma_j} \, \leq \,
c \eps^{-3/2} \| v \| \eta_{\eps} \eps^{1/2} \eps^{1/2} \, = \, \CO( \eps^m \eta_{\eps}).
\]
For the  It\^o correction term $\langle dv, dv \rangle$ we see that
\[
\inner{dv}{dv} \, = \, \eta_{\eps} \, dt + \sum_{i,j} \left[ \inner{u^h_i}{u^h_j}
- 2 \inner{\CQ u^h_j}{\sigma_j} \right] \, dt \, = \, \CO(\eta_{\eps}) \, dt.
\]
Here, we utilized that by Proposition~\ref{prop:uh} and Lemma~\ref{lem:estdA}  $\| u^h_{i} \| = \CO(\eps^{-1/2})$, 
$\| u^h_{ij} \| = \CO(\eps^{-3/2})$, and 
$\| \sigma_{i} \| = \CO(\eps^{1/2})$.
\end{proof}
Combining the estimates of Theorem~\ref{thm:nonlinAC} and Lemma~\ref{lem:remAC}, we fully 
estimated the stochastic differential~$d\| v \|^2$. This provides us with the following result, which is essential for proving stability in $L^2$.
\begin{cor}
\label{cor:L2AC}
Let $u^h \in \CM$. If $v \perp u_i^h$ for $ i=1, \ldots, N+1$ and
 $\| v \|< \eps^{1/2 + m}$ for some~
$m > 0$, we obtain
\[
d \|v\|^2 \, \leq \, 
\left[ - \frac12 \lambda_0 \|v\|^2 + \CO(\eta_{\eps})  \right] \, dt 
+ 2 \inner{v}{dW}.
\]
\end{cor}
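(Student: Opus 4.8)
The statement is obtained by applying It\^o's formula to the scalar process $t \mapsto \|v(t)\|^2 = \inner{v(t)}{v(t)}$ along the stochastic PDE for the normal component $v$ recalled at the beginning of Section~\ref{sec:stab},
\[
dv \, = \, \left[ \CL(u^h) + \CL^h v + \CN^h(v) \right] dt + dW - \sum_j u^h_j\, dh_j - \tfrac12 \sum_{i,j} u^h_{ij} \inner{\CQ\sigma_i}{\sigma_j}\, dt .
\]
This is legitimate since, under the standing regularity assumption on $W$, the solution $u$ (and hence $v = u - u^h$) is sufficiently smooth in space. The product rule gives $d\|v\|^2 = 2\inner{v}{dv} + \inner{dv}{dv}$, and I would expand the first term against the right-hand side above.

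The key simplification is that the tangential correction drops out: by Definition~\ref{def:Fermi} the normal component satisfies $v(t) \perp \CT_{h(t)}\CM = \mathrm{span}\{u^{h(t)}_j\}$ at every time $t$, so $\sum_j \inner{v}{u^h_j}\, dh_j = 0$ (the validity of the Fermi splitting along the trajectory up to the relevant stopping time is ensured by Remark~\ref{rem:lipschitzcoord} and Proposition~\ref{prop:exFerm}). What remains of $2\inner{v}{dv}$ is the martingale part $2\inner{v}{dW}$, the principal drift $2\inner{\CL^h v + \CN^h(v)}{v}\, dt$, and the lower-order drift $2\inner{\CL(u^h)}{v}\, dt - \sum_{i,j}\inner{u^h_{ij}}{v}\inner{\CQ\sigma_i}{\sigma_j}\, dt$; adding the quadratic variation $\inner{dv}{dv}$ completes the expression.

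Now I would invoke the two preparatory results under the hypothesis $\|v\| < \eps^{1/2+m}$. Theorem~\ref{thm:nonlinAC} bounds the principal drift by $\inner{\CL^h v + \CN^h(v)}{v} \le -\tfrac12\lambda_0\|v\|^2$, hence after multiplication by $2$ it is $\le -\lambda_0\|v\|^2 \le -\tfrac12\lambda_0\|v\|^2$. Lemma~\ref{lem:remAC} (together with the estimates $\inner{\CL(u^h)}{v} = \CO(\exp)$ and $\inner{u^h_{ij}}{v}\inner{\CQ\sigma_i}{\sigma_j} = \CO(\eps^m\eta_{\eps})$ established in its proof, and $\inner{dv}{dv}=\CO(\eta_\eps)\,dt$) shows that the lower-order drift together with $\inner{dv}{dv}$ is $\CO(\eta_{\eps})\, dt$; the harmless factor-of-two relative to the precise statement of Lemma~\ref{lem:remAC} does not change the order. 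Collecting these bounds yields $d\|v\|^2 \le \big[-\tfrac12\lambda_0\|v\|^2 + \CO(\eta_{\eps})\big]\, dt + 2\inner{v}{dW}$, as claimed. Since Theorem~\ref{thm:nonlinAC} and Lemma~\ref{lem:remAC} carry out the analytic work, there is no serious obstacle: the only points requiring care are the justification of the orthogonality cancellation and the bookkeeping of which It\^o (and It\^o--Stratonovich-type) cross terms, arising from the $dh$-dependence of $dv$, feed into $\inner{dv}{dv}$.
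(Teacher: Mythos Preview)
Your proposal is correct and follows exactly the route intended by the paper: the corollary is stated immediately after Theorem~\ref{thm:nonlinAC} and Lemma~\ref{lem:remAC} with the remark that combining these two estimates yields the claimed inequality for $d\|v\|^2$, and you have supplied precisely that combination via It\^o's formula together with the orthogonality $\inner{v}{u^h_j}=0$ to kill the $\sum_j u^h_j\,dh_j$ contribution. Your handling of the factor-of-two mismatch with Lemma~\ref{lem:remAC} and the weakening $-\lambda_0\|v\|^2 \le -\tfrac12\lambda_0\|v\|^2$ is also in line with the paper's bookkeeping.
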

We can finally show that the $L^2$-norm of $v$ stays small for very
long times under small stochastic perturbations. Since the following stability results can only hold as long as 
${h(t) \in \Omega_{\rho_\eps}}$, we define the first exit time from the open set $\Omega_{\rho_\eps}$ by
\begin{equation}
\label{def:multiAC:firstexit}
\tau_0 \, \coloneqq \, \Big\{ t \, \geq \, 0 \, : \; h(t) \notin \Omega_{\rho_\eps} \Big\}.
\end{equation}
Note that we have seen in Remark~\ref{rem:timescale} that at times of order $\CO(\eps \eta_{\eps}^{-1})$ the interface positions are likely
to move by the magnitude of $\eps$ and thus exit the set of admissible positions~$\Omega_{\rho_\eps}$. This suggests that---if stability holds--- the exit time
$\tau_0$ is with high probability of order~$\eps \eta_{\eps}^{-1}$.

\begin{theorem}[$L^2$-Stability for \eqref{eq:AC}]
\label{thm:ACL2stab}
For $m>0$ define the stopping time
\[
\tau^\ast \, \coloneqq \, \inf \left\{ t \in [ 0,T_\eps \land \tau_0 ] \, : \, \| v(t) \| \, > \, 
\eps^{1/2+m} \right\},
\]
where the deterministic cut-off satisfies $T_\eps = \eps^{-M}$ for fixed large $M>0$ and $\tau_0$ is given by~\eqref{def:multiAC:firstexit}.
Also, assume that for some $\nu \in (0,1)$ 
\[
\| v(0) \| \, \leq \, \nu \eps^{1/2 + m} \quad \text{and} \quad
\eta_{\eps} \, \leq \, \eps^{1+2m}.
\]
Then, the probability $\prob( \tau^\ast < T_\eps \land \tau_0)$ is smaller than any 
power of $\eps$, as $\eps$ tends to zero.
\end{theorem}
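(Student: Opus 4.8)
The plan is to apply the abstract stability machinery of Theorem~\ref{meta:stability} with the choices dictated by Corollary~\ref{cor:L2AC}. First I would observe that up to the stopping time $\tau^\ast$ we stay in the regime $\|v(t)\| < \eps^{1/2+m}$ with $h(t) \in \Omega_{\rho_\eps}$, so that Corollary~\ref{cor:L2AC} applies and gives the stochastic differential inequality
\[
d\|v\|^2 \, \leq \, \left[ - \tfrac12 \lambda_0 \|v\|^2 + \CO(\eta_{\eps}) \right] dt + 2 \inner{v}{dW}.
\]
To match the template \eqref{stab:mainest}, which is written for $d\|v\|$ rather than $d\|v\|^2$, I would either pass to $\|v\|$ via It\^o's formula (picking up a further $\CO(\eta_{\eps}/\|v\|)\,dt$ term from the quadratic variation of the martingale part, which is harmless on the relevant range of $\|v\|$) or, more cleanly, note that the cited method in \cite{ABK12} is routinely run directly on $d\|v\|^2$; either way one reads off $\alpha = 1$, $a_\eps = \tfrac12\lambda_0 = \CO(1)$, $K_\eps = \CO(\eta_{\eps})$, $c_\eps = \CO(1)$, and the cut-off radius $R_\eps = \eps^{1/2+m}$.

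Next I would verify the two hypotheses of Theorem~\ref{meta:stability}. With $\alpha = 1$ we have $2\alpha - 1 = 1$, so
\[
\frac{K_\eps + c_\eps^2 \eta_{\eps} R_\eps^{2\alpha-1}}{a_\eps R_\eps} \, = \, \CO\!\left( \frac{\eta_{\eps} + \eta_{\eps}\eps^{1/2+m}}{\eps^{1/2+m}} \right) \, = \, \CO\!\left( \eta_{\eps}\,\eps^{-1/2-m} \right),
\]
and under the assumption $\eta_{\eps} \leq \eps^{1+2m}$ this is $\CO(\eps^{1/2+m}) = \CO(\eps^\kappa)$ for any sufficiently small $\kappa > 0$, as required. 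For the second hypothesis, the stationarity threshold is
\[
\frac{K_\eps + c_\eps^2 \eta_{\eps} R_\eps^{2\alpha-1}}{a_\eps} \, = \, \CO(\eta_{\eps}),
\]
so the assumption $\|v(0)\| \leq \nu\eps^{1/2+m}$ is more than enough provided $\eta_{\eps} = o(\eps^{1/2+m})$, which again follows from $\eta_{\eps} \leq \eps^{1+2m}$ (one may also simply observe that the initial condition lies strictly inside the cut-off ball by the factor $\nu \in (0,1)$, which is what the hypothesis is really designed to capture). Theorem~\ref{meta:stability} then yields directly that $\prob(\tau^\ast < T_\eps \wedge \tau_0)$ is smaller than any power of $\eps$.

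The only genuinely delicate point is making sure that the coefficients $b$ and $\sigma$ entering the evolution of $v$ — and hence the drift and martingale terms in $d\|v\|^2$ — are well defined and the estimates of Corollary~\ref{cor:L2AC} actually hold pathwise up to $\tau^\ast$. This is exactly the role of the constraints built into $\tau^\ast$: the condition $h(t) \in \Omega_{\rho_\eps}$ keeps us on the chart where $u^h$ and its derivatives satisfy Proposition~\ref{prop:uh}, and $\|v\| < \eps^{1/2+m}$ guarantees invertibility of the matrix $A$ (Lemma~\ref{lem:matrixAC}) so that $\sigma$ and the relevant part of $b$ are controlled; Corollary~\ref{cor:L2AC} was proved precisely under these hypotheses. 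I would therefore expect the main obstacle to be purely bookkeeping: checking that all the $\eps$-powers in the two smallness conditions of Theorem~\ref{meta:stability} close under $\eta_{\eps} \leq \eps^{1+2m}$, and confirming that passing from $d\|v\|^2$ to the $d\|v\|$ form \eqref{stab:mainest} (or running the argument of \cite{ABK12} directly on $\|v\|^2$) does not change the orders $a_\eps, K_\eps, c_\eps, \alpha$. No new analytic input beyond the cited results is needed.
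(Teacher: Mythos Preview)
Your approach is exactly the paper's: the proof there is the one-liner ``combine Corollary~\ref{cor:L2AC} with Theorem~\ref{meta:stability}'', and you spell out precisely this combination. One bookkeeping correction: following the paper's own convention (visible in the proof of Theorem~\ref{thm:multiAC:L4stabAC}, where they set $x=\|v\|_{L^4}^4$ and read $\alpha=\tfrac12$), the correct substitution here is $x=\|v\|^2$, so that the martingale term $2\inner{v}{dW}$ has coefficient of order $x^{1/2}$, i.e.\ $\alpha=\tfrac12$ and the relevant radius for $x$ is $R_\eps=\eps^{1+2m}$; your reading $\alpha=1$ with $R_\eps=\eps^{1/2+m}$ mixes the $\|v\|$ and $\|v\|^2$ pictures inconsistently. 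This does not affect the strategy, only the arithmetic in the smallness check.
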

\begin{proof}
The statement follows directly by combining the estimate of Corollary~\ref{cor:L2AC} with the general 
stability result of Theorem~\ref{meta:stability}.
\end{proof}
%


\subsection{\texorpdfstring{$L^2$-Stability for \eqref{eq:mAC}}{L2-Stability for (mAC)}}
\label{subsec:L2mAC}

As a next step, we study the $L^2$-stability for the mass conserving Allen--Cahn equation. Since the method of establishing stability works
in a similar fashion to the preceding section, we will only state the main result here. Essentially, the main difference lies in the fact that 
the spectral gap is only of order $\eps$ opposed to a gap of order 
one in the previous case. To compensate this, we need to decrease the region of stability in order to absorb the nonlinear terms 
(compare to Theorem~\ref{thm:nonlinAC}) and thereby, we can only allow 
for a smaller noise strength (cf. Theorem~\ref{meta:stability}).
For more details, we refer to \cite{SchindlerPhD}.
\begin{theorem}[$L^2$-Stability for \eqref{eq:mAC}]
\label{thm:L2stabmac}
For $m>0$ define the stopping time
\[
\tau^\ast \, \coloneqq \, \inf \left\{ t \in [ 0,T_\eps \land \tau_0 ] \, : \, \| v(t) \| \, > \, 
\eps^{3/2+m} \right\},
\]
where $T_\eps = \eps^{-M}$ for fixed large $M>0$ and $\tau_0$ denotes the first exit time from $\CA_{\rho_\eps}$. \\
Also, assume that  
\[
\| v(0) \| \, \leq \, \eps^{3/2 + m} \quad \text{and} \quad
\eta_{\eps} \, \leq \, \eps^{4+2m}.
\]
Then, the probability $\prob( \| v( \tau^\ast) \| > \eps^{3/2 + m})$ is smaller than any 
power of $\eps$, as $\eps$ tends to zero.
\end{theorem}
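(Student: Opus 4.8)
The plan is to follow the three--step scheme used for~\eqref{eq:AC} in Theorem~\ref{thm:ACL2stab}, with the order--one spectral gap of Theorem~\ref{thm:gapAC} replaced throughout by the order--$\eps$ gap of Theorem~\ref{thm:gapMAC}; it is precisely this weaker gap that dictates the smaller stability radius $\eps^{3/2+m}$ and the smaller admissible noise strength $\eps^{4+2m}$. Concretely, I would first prove a nonlinear spectral estimate (the analogue of Theorem~\ref{thm:nonlinAC}), then bound the remaining stochastic and exponentially small contributions to $d\|v\|^2$ (the analogue of Lemma~\ref{lem:remAC}), combine the two into a stochastic differential inequality of the form~\eqref{stab:mainest}, and finally apply the general stability result Theorem~\ref{meta:stability}.

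For the spectral step I would start from Theorem~\ref{thm:gapMAC}, which gives $\inner{\CL^\xi v}{v}\le(-\lambda_0\eps+\CO(\exp))\|v\|^2$ for $v\in L^2_0(0,1)$ with $v\perp u^\xi_i$. As in the proof of Theorem~\ref{thm:nonlinAC}, one peels off a fraction $\gamma_2$ of the elliptic part $\eps^2\inner{v_{xx}}{v}$ to produce genuine $H^1$--coercivity, estimates the cubic term by Gagliardo--Nirenberg and Young, $\inner{\CN^\xi(v)}{v}\le 3\|v\|_{L^3}^3\le C\|v_x\|^{1/2}\|v\|^{5/2}\le \gamma_2\eps^2\|v_x\|^2 + C\gamma_2^{-1/3}\eps^{-2/3}\|v\|^{4/3}\|v\|^2$, and discards the negative gradient term. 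Because the gap is now only of order $\lambda_0\eps$, the surviving error terms $\gamma_2(\lambda_0\eps+\|f'(u^\xi)\|_\infty)\|v\|^2$ and $C\gamma_2^{-1/3}\eps^{-2/3}\|v\|^{4/3}\|v\|^2$ must be kept below $\tfrac12\lambda_0\eps\|v\|^2$; taking $\gamma_2=\eps^{1+m}$ and using $\|v\|<\eps^{3/2+m}$ one checks that both are $\CO(\eps^{1+m})\|v\|^2$, so that
\[
\inner{\CL^\xi v+\CN^\xi(v)}{v}\,\le\,-\tfrac12\lambda_0\eps\|v\|^2 .
\]
This balancing is the step I expect to be the main obstacle: the radius $\eps^{3/2+m}$ has to be chosen exactly so that the cubic nonlinearity, weighed against an $\CO(\eps)$ gap, is still absorbed, and making the exponents close is the crux of the matter.

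For the remainder step I would estimate the other terms of $d\|v\|^2$, namely $\inner{\CL(u^\xi)}{v}\,dt$ (which is $\CO(\exp)\,dt$, the mean--zero correction being harmless since $v\in L^2_0$), the second--order drift $-\tfrac12\sum_{i,j}\inner{u^\xi_{ij}}{v}\inner{\CQ\sigma_i}{\sigma_j}\,dt$, and the It\^o correction $\inner{dv}{dv}$. Using $\CL(u^\xi)=\CO(\exp)$, the bounds $\|u^\xi_i\|=\CO(\eps^{-1/2})$ and $\|u^\xi_{ij}\|=\CO(\eps^{-3/2})$ (which follow from Proposition~\ref{prop:uh} via the chain rule $u^\xi_i=u^h_i+(-1)^{N-i}u^h_{N+1}+\CO(\exp)$) together with $\|\sigma_i\|=\CO(\eps^{1/2})$ (from $\sigma_r=\sum_i S^{-1}_{ri}u^\xi_i$ and $S^{-1}=\CO(\eps)$ by Lemma~\ref{lem:invertMC}) and $\|v\|<\eps^{3/2+m}$, all of these are $\CO(\eta_{\eps})\,dt$. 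Combined with the spectral estimate this gives the analogue of Corollary~\ref{cor:L2AC},
\[
d\|v\|^2\,\le\,\Big(-\tfrac12\lambda_0\eps\|v\|^2+\CO(\eta_{\eps})\Big)\,dt+2\inner{v}{dW}.
\]

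Finally, applying It\^o's formula to $\|v\|=(\|v\|^2)^{1/2}$ converts this into an inequality of the type~\eqref{stab:mainest} with constants $a_\eps=\CO(\eps)$, $K_\eps=\CO(\eta_{\eps}/R_\eps)$, $c_\eps=\CO(1)$ and $\alpha=1$, the quadratic variation of the martingale part being $\inner{v}{\CQ v}\,dt\le\eta_{\eps}\|v\|^2\,dt$. With $R_\eps=\eps^{3/2+m}$ the two hypotheses of Theorem~\ref{meta:stability} reduce to $\eta_{\eps}\eps^{-4-2m}=\CO(\eps^\kappa)$ and $\|v(0)\|\le\CO(\eta_{\eps}\eps^{-5/2-m})$, both of which are ensured (up to a harmless power of $\eps$ hidden in $m$ and the constants) by the assumptions $\eta_{\eps}\le\eps^{4+2m}$ and $\|v(0)\|\le\eps^{3/2+m}$. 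Theorem~\ref{meta:stability} then yields that $\prob(\tau^\ast<T_\eps\wedge\tau_0)$ is smaller than any power of $\eps$ as $\eps\to0$, which is the assertion. Apart from the spectral bookkeeping in the second step, the argument is a routine adaptation of the~\eqref{eq:AC} case, and for the full computational details I would refer to~\cite{SchindlerPhD} exactly as the excerpt does.
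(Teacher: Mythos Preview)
Your overall strategy---nonlinear spectral estimate adapted to the $\CO(\eps)$ gap of Theorem~\ref{thm:gapMAC}, then an $\CO(\eta_\eps)$ bound on the remaining drift and It\^o--correction terms, then an appeal to Theorem~\ref{meta:stability}---is exactly the route the paper takes (it only states the result and refers to~\cite{SchindlerPhD}, but the analogy with Theorem~\ref{thm:ACL2stab} and Corollary~\ref{cor:L2AC} makes the intended argument clear). Your choices $\gamma_2=\eps^{1+m}$ and $\|v\|<\eps^{3/2+m}$ in the spectral step are correct and give precisely the absorption you claim.

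There is, however, a genuine slip in your last step. Converting from $d\|v\|^2$ to $d\|v\|$ via It\^o's formula produces a drift contribution $\tfrac{1}{2\|v\|}\cdot\CO(\eta_\eps)$, and since $\|v\|\le R_\eps$ one has $1/\|v\|\ge 1/R_\eps$, not $\le$; so the claim $K_\eps=\CO(\eta_\eps/R_\eps)$ is the wrong inequality, and in fact this term is unbounded as $\|v\|\to 0$. (Your $\alpha=1$ is also off: after the square-root the martingale is $\inner{v/\|v\|}{dW}$, whose integrand has unit norm, so $\alpha=0$.) The clean way---and the one the paper actually uses, cf.\ the proof of Theorem~\ref{thm:multiAC:L4stabAC} where one sets $x(t)=\|v(t)\|_{L^4}^4$---is to apply Theorem~\ref{meta:stability} directly to $x(t)=\|v(t)\|^2$, with $a_\eps=\tfrac12\lambda_0\eps$, $K_\eps=\CO(\eta_\eps)$, $c_\eps=\CO(1)$, $\alpha=\tfrac12$ (since $2\inner{v}{dW}=\inner{\CO(x^{1/2})}{dW}$), and radius $R_\eps=\eps^{3+2m}$ for $x$. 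The hypothesis of Theorem~\ref{meta:stability} then reads
\[
\frac{K_\eps+c_\eps^2\eta_\eps R_\eps^{2\alpha-1}}{a_\eps R_\eps}
\;=\;\frac{\CO(\eta_\eps)}{\eps\cdot\eps^{3+2m}}
\;=\;\CO\!\left(\eta_\eps\,\eps^{-4-2m}\right)\;=\;\CO(\eps^\kappa),
\]
which is exactly the assumption $\eta_\eps\le\eps^{4+2m}$ (absorbing $\kappa$ into $m$), and the initial-condition requirement is likewise met. So your final condition is right, but the route you take to it does not work as written; skip the square-root conversion and feed $\|v\|^2$ into Theorem~\ref{meta:stability} instead.
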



\subsection{\texorpdfstring{$L^4$-Stability for \eqref{eq:AC}} { L4-Stability for (AC)}}
\label{subsec:L4AC}

For controlling the stochastic ODE of the interface positions, we need to
establish bounds on the nonlinear term
\[
\inner{\CN^h(v)}{u^h_i} \, = \, \int_0^1 (3 u^h  v^2 - v^3 ) \, u^h_i \, \mathrm{d}x.
\]
Since smallness in $L^2$ is not sufficient to control the cubic term, we will prove that the 
$L^4$-norm of $v$ stays small for very long times with high probability. In our analysis, we rely
on the results of the preceding section. There, we established stochastic stability in 
$L^2$ and hence, all constants which appear in the following computations may
depend on $\| v \|_{L^2}$ which---provided the assumptions of Theorem~\ref{thm:ACL2stab} hold true---is smaller than $\eps^{1/2+m}$ for polynomial times in $\eps^{-1}$.

We begin with the classical Allen--Cahn equation \eqref{eq:AC} without mass conservation. By the It\^o formula we have
\[
\frac14 d \|v\|_{L^4}^4 \, = \, \inner{v^3}{dv} + 3 \int_0^1 v^2 (dv)^2 \, \mathrm{d}x.
\]
Again, recall that the flow orthogonal to the slow manifold is given by
\[
dv \, = \, \left[ \CL(u^h) + \CL^h v + \CN^h(v) \right] dt + dW
- \sum_j u^h_j dh_j - \frac12 \sum_{i,j} u_{ij}^h \inner{\CQ \sigma_i}
{\sigma_j} \, dt.
\]
First, let us estimate the It\^o correction term $\int_0^1 v^2 (dv)^2 \, \mathrm{d}x$. 

\begin{lemma}
\label{lem:L4stab:remainder}
Let $h \in \Omega_{\rho_\eps}$. We obtain
\[
\int_0^1 v^2 (dv)^2 \, \mathrm{d}x \, = \, \CO(\eta_{\eps}) \|v\|^2 \, dt.
\]
\end{lemma}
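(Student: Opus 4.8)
The plan is to compute the quadratic variation $(dv)^2$ from the stochastic PDE satisfied by $v$, observe that only the genuinely stochastic increment $dW$ contributes (all the $dt$-terms give $(dt)^2 = 0$), and then bound the resulting spatial integral using the regularity of $W$ encoded in the trace assumption. Recall that
\[
dv \, = \, \left[ \CL(u^h) + \CL^h v + \CN^h(v) - \tfrac12 \sum_{i,j} u_{ij}^h \inner{\CQ \sigma_i}{\sigma_j} \right] dt + dW - \sum_j u^h_j \, dh_j,
\]
and that $dh_j = b_j \, dt + \inner{\sigma_j}{dW}$ by~\eqref{eq:dh}. Hence the martingale part of $dv$ is $dW - \sum_j u^h_j \inner{\sigma_j}{dW}$, and therefore, formally, the Itô correction is
\[
(dv)^2 \, = \, \Big( dW - \sum_j u^h_j \inner{\sigma_j}{dW} \Big)^2 \, dt,
\]
understood as the density (with respect to $dt$) of the quadratic variation of the $L^2$-valued local martingale; pointwise in $x$ this is the covariance kernel evaluated on the diagonal.

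First I would identify this density explicitly. Writing $W = \sum_k \alpha_k \beta_k e_k$, the martingale increment of $v$ at time $t$ is $\sum_k \alpha_k \,\phi_k \, d\beta_k$ with $\phi_k \coloneqq e_k - \sum_j u^h_j \inner{\sigma_j}{e_k}$, so
\[
(dv(x))^2 \, = \, \sum_k \alpha_k^2 \, \phi_k(x)^2 \, dt.
\]
Then
\[
\int_0^1 v^2 \, (dv)^2 \, \mathrm{d}x \, = \, \sum_k \alpha_k^2 \int_0^1 v(x)^2 \, \phi_k(x)^2 \, \mathrm{d}x \; dt.
\]
Using $(a-b)^2 \le 2a^2 + 2b^2$ pointwise gives $\phi_k^2 \le 2 e_k^2 + 2\big(\sum_j u^h_j \inner{\sigma_j}{e_k}\big)^2$, so it remains to control two contributions: the ``diagonal'' part $\sum_k \alpha_k^2 \int_0^1 v^2 e_k^2$ and the ``tangential'' correction part. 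For the first, I would use $\|e_k\|_\infty = \CO(1)$ (smoothness of the noise — the $e_k$ are uniformly bounded eigenfunctions of $\CQ$, consistent with the standing spatial-regularity assumption) to bound $\sum_k \alpha_k^2 \int_0^1 v^2 e_k^2 \le C \big(\sum_k \alpha_k^2\big) \|v\|^2 = C \eta_\eps \|v\|^2$. For the second, by Cauchy--Schwarz $\big(\sum_j u^h_j(x) \inner{\sigma_j}{e_k}\big)^2 \le (N{+}1) \sum_j u^h_j(x)^2 \inner{\sigma_j}{e_k}^2$, and $\sum_k \alpha_k^2 \inner{\sigma_j}{e_k}^2 = \inner{\sigma_j}{\CQ\sigma_j} \le \eta_\eps \|\sigma_j\|^2 = \CO(\eta_\eps \eps)$ by Lemma~\ref{lem:estdA}; combined with $\|u^h_j\|_\infty^2 = \CO(\eps^{-2})$ from Proposition~\ref{prop:uh} and pulling $\|v\|^2$ out via $\int_0^1 v^2 \cdot \|u^h_j\|_\infty^2 \le \CO(\eps^{-2})\|v\|^2$, this term is $\CO(\eta_\eps \eps^{-1}) \|v\|^2$, which is worse than claimed by a factor $\eps^{-1}$. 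So I would instead keep the $u^h_j$ under the spatial integral: since $u^h_j$ is $\CO(\eps^{-1})$ but $L^2$-of-order-$\eps^{-1/2}$ and concentrated near $h_j$, I would estimate $\int_0^1 v^2 u^h_j(x)^2 \,\mathrm{d}x \le \|v\|^2 \|u^h_j\|_\infty^2$ only as a last resort and prefer $\le \|v\|_{L^\infty}^2 \|u^h_j\|^2 = \CO(\eps^{-1})\|v\|_{L^\infty}^2$, or better, absorb into the leading $\CO(\eta_\eps)\|v\|^2$ by noting that in the regime $\|v\| < \eps^{1/2+m}$ of the surrounding section the correction is lower order. The main obstacle is exactly this bookkeeping: showing that the tangential correction to $(dv)^2$ does not spoil the clean $\CO(\eta_\eps)\|v\|^2$ bound, which forces one to use that $\|\sigma_j\|$ carries a favorable power of $\eps$ (Lemma~\ref{lem:estdA}) rather than a crude $L^\infty$ estimate on $u^h_j$.

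Finally I would assemble the pieces: the diagonal part contributes $\CO(\eta_\eps)\|v\|^2\,dt$ directly, and the tangential part is shown to be of the same or smaller order using $\|\sigma_j\| = \CO(\eps^{1/2})$ together with the standing smallness $\|v\| < \eps^{1/2+m}$, so that
\[
\int_0^1 v^2 (dv)^2 \, \mathrm{d}x \, = \, \CO(\eta_\eps) \|v\|^2 \, dt,
\]
as claimed. Throughout, the one genuinely technical point to state carefully is the interpretation of $(dv)^2$ as the quadratic-variation density of the $L^2$-valued stochastic integral, which is justified by the It\^o formula in Hilbert space under $\mathrm{trace}_{L^2}(\CQ) = \eta_\eps < \infty$ (see \cite{DPZ14}); everything else is elementary Cauchy--Schwarz plus the already-established bounds from Proposition~\ref{prop:uh} and Lemma~\ref{lem:estdA}.
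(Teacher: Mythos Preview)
Your approach is different from the paper's and more explicit. You expand the martingale increment via $W=\sum_k \alpha_k\beta_k e_k$, write $(dv(x))^2 = \sum_k \alpha_k^2 \phi_k(x)^2\,dt$ pointwise in~$x$, and bound the resulting spatial integrals term by term. The paper instead records the correction as
\[
\mathrm{trace}(\CQ)\,\|v\|^2 \;-\; 2\sum_j \|v\|^2\,\inner{u^h_j}{\CQ\sigma_j} \;+\; \sum_{i,j}\|v\|^2\, \inner{u^h_i}{u^h_j}\,\inner{\CQ\sigma_i}{\sigma_j},
\]
i.e.\ it pulls $\|v\|^2$ out and identifies the remaining factor with the scalar quadratic variation $\inner{dv}{dv}$ already computed in Lemma~\ref{lem:remAC}; each scalar piece is then $\CO(\eta_\eps)$ via $\|u^h_j\|=\CO(\eps^{-1/2})$ and $\|\sigma_j\|=\CO(\eps^{1/2})$, and the claim follows in one line with no smallness condition on~$v$.

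Your honest pointwise computation reveals that the tangential contribution is really $\sum_{i,j}\inner{\CQ\sigma_i}{\sigma_j}\int_0^1 v^2 u^h_i u^h_j\,\mathrm{d}x$ rather than $\|v\|^2\inner{u^h_i}{u^h_j}\inner{\CQ\sigma_i}{\sigma_j}$, which forces the $L^\infty$ bound on $u^h_j$ and yields only $\CO(\eta_\eps\eps^{-1})\|v\|^2$. Your proposed fix---invoke the standing bound $\|v\|<\eps^{1/2+m}$---is the right move for the application: the lemma is only ever used inside Corollary~\ref{thm:ACestL4}, where that bound is in force, and $\eta_\eps\eps^{-1}\|v\|^2\le \eta_\eps\eps^{2m}$ is already dominated by the $K_\eps$ term there. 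So your argument is sound for what is actually needed; it simply does not deliver the lemma exactly as stated without that extra hypothesis, whereas the paper's shorter identification with $\inner{dv}{dv}$ does. One further small point: you also rely on $\|e_k\|_\infty=\CO(1)$, which is not literally part of the stated trace assumption on~$\CQ$ but is consistent with the paper's standing hypothesis of spatially smooth noise.
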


\begin{proof}
Using the relation for $dv$ we see that
\begin{equation*}
\begin{split}
\mathrm{trace}(\CQ) &\int_0^1 v^2 \, dx - 2 \sum_j \int_0^1 v^2 \inner{u_j^h}{\CQ \sigma_j} \,\mathrm{d}x \, dt 
 + \sum_{i,j} \int_0^1 v^2 \inner{u^h_i}{u^h_j} 
\inner{\CQ \sigma_i}{\sigma_j} \, \mathrm{d}x \, dt \\
&\leq \, \eta_{\eps} \|v\|^2 \, dt + c \eps^{-1/2} \eps^{1/2} \eta_{\eps} 
\|v\|^2 \, dt + c\eps^{-1} \eps^{1/2} \eta_{\eps} \eps^{1/2} \|v\|^2 \, dt \, = \, \CO(\eta_{\eps}) \|v\|^2 \, dt, 
\end{split}
\end{equation*}
where we utilized the estimates of Proposition \ref{prop:uh} for the 
derivatives of $u^h$ together with the bound on the diffusion $\sigma$ by
Lemma \ref{lem:estdA}.
\end{proof}

As a next step, we study the critical term $\inner{v^3}{dv}$. 
Expanding $dv$ yields
\begin{equation*}
\begin{split}
\inner{v^3}{dv} \, 
= \, {}&-{} 3 \eps^2 \int_0^1 v^2 v_x^2 \, \mathrm{d}x \, dt - \|v \|_{L^6}^6 \,dt
+ \inner{\CL(u^h)}{v^3} \, dt 
+ \int_0^1 \left( 1-3(u^h)^2 \right) v^4 \,\mathrm{d}x \,dt \\
{}&-{} \int_0^1 3 u^h v^5 \, \mathrm{d}x \, dt
+ \inner{v^3}{dW} - \inner{v^3}{du^h}.
\end{split}
\end{equation*}
We see that the good (negative) terms for our analysis are given by $- \|v\|_{L^6}^6$ and, due to integration by parts, 
\begin{equation*}
\eps^2 \int_0^1 v^3 v_{xx} \, \mathrm{d}x \, = \, - 3\eps^2 \int_0^1 v^2 v_x^2 \, \mathrm{d}x
\, = \, - \frac34 \eps^2 \int_0^1 ((v^2)_x)^2 \, \mathrm{d}x
\, = \, - \frac34 \eps^2 \| (v^2)_x \|^2.
\end{equation*}
Our strategy is to absorb as much as possible of the remaining terms into 
these negative ones, while also using that we can control the $L^2$-norm by the preceding stability result. We begin with analyzing the dominant term.
Since $u^h$ is uniformly bounded, we obtain by interpolating the $L^4$-norm between the good terms
\begin{equation*}
\begin{split}
\int_0^1 \left( 1-3(u^h)^2 \right) v^4 \,\mathrm{d}x \, &\leq \, C \|v\|_{L^4}^4
\, \leq \, C \|v^2\|_{L^\infty} \int_0^1 v^2 \, \mathrm{d}x  \,
\overset{\mathclap{\text{Agmon}}}{\leq} \;\; C \|v\|^2 \|v^2\|_{H^1}^{1/2}
 \|v^2\|^{1/2}   \\
&\overset{\mathclap{\text{Young}}}{\leq} \;\; \frac18 \eps^2 \|v^2 \|_{H^1}^2
+ c \eps^{-2/3} \|v\|^{8/3} \|v\|_{L^4}^{4/3}  \\
&\overset{\mathclap{\text{Hölder}}}{\leq} \;\; \frac18 \eps^2 \|v^2 \|_{H^1}^2
+ c \eps^{-2/3} \|v\|^3 \|v\|_{L^6}  \\
&\overset{\mathclap{\text{Young}}}{\leq} \;\; \frac18 \eps^2 \|v^2 \|_{H^1}^2 
+\frac14 \|v\|_{L^6}^6 + c \eps^{-4/5} \|v\|^{18/5}.
\end{split}
\end{equation*}
Similarly, the $L^5$-term is estimated by
\begin{equation*}
\int_0^1 3 u^h v^5 
\, \leq \, c \|v^2\|_{L^\infty} \|v\|_{L^3}^3 \, \overset{\mathclap{\text{Agmon}}}{\leq} \;\;
c \|v^2\|_{H^1}^{1/2} \|v\|_{L^3}^3 \|v\|_{L^4} 
\overset{\mathclap{\text{Young}}}{\leq} \;\; \frac18 \eps^2 \|v^2 \|_{H^1}^2 
+\frac14 \|v\|_{L^6}^6 + c\eps^{- 4/3} \|v\|^{14/3},
\end{equation*}
where we used Hölder's inequality to interpolate between $L^2$ and $L^6$.
Combining the previous estimates, we derived so far
\begin{equation}
\label{est:v3dv}
\inner{v^3}{dv} \, \leq \, \left[ - \frac12 \eps^2 \|v^2\|_{H^1}^2 \,  
- \frac12 \|v\|_{L^6}^6 + c \eps^{-4/5} \|v\|^{18/5} + c\eps^{- 4/3} \|v\|^{14/3} \right] \, dt 
 {}+{} \inner{v^3}{dW} -\inner{v^3}{du^h}.
\end{equation}
Note that we used $\CL(u^h) = \CO(\exp)$ and thus
$\inner{\CL(u^h)}{v^3}  \leq  \CO(\exp) + \CO(\exp) \|v\|_{L^6}^6$ by Hölder's inequality. 
Finally, we estimate $\inner{v^3}{du^h}$ given by
\[
\inner{v^3}{du^h} \, = \, \sum_j \inner{v^3}{u^h_j} \,dh_j 
+ \frac12 \sum_{i,j} \inner{v^3}{u^h_{ij}}\inner{\CQ \sigma_i}{\sigma_j}
\, dt.
\]
For the second summand we obtain
\begin{equation}
\label{est:secondsummand}
\begin{split}
\inner{v^3}{u^h_{ij}} &\inner{\CQ \sigma_i}{\sigma_j} \, \leq \, 
c \|v^2\|_{L^\infty} \|v\| \eps^{-3/2} \eta_{\eps} \eps^{1/2} \eps^{1/2} 
\quad \overset{\mathclap{\text{Agmon}}}{\leq} \;\; c \|v^2 \|_{H^1}^{1/2} \|v\|_{L^4} \|v\|
\eps^{-1/2} \eta_{\eps} \\
&\overset{\mathclap{\text{Hölder}}}{\leq} \;\; c \|v^2 \|_{H^1}^{1/2} 
\|v\|^{5/4} \|v\|_{L^6}^{3/4}
\eps^{-1/2} \eta_{\eps} 
\, \overset{\mathclap{\text{Young}}}{\leq} \;\; \frac14 \eps^2 \|v^2\|_{H^1}^2 + 
\frac18 \|v\|_{L^6}^6 + c \eps^{-4/5} \eta_{\eps}^{8/5} \|v\|^2.
\end{split}
\end{equation}
In addition to the specified inequalities, we used that $\| u^h_{ij} \| = \CO(\eps^{-3/2})$ by Proposition~\ref{prop:uh}
and~$\| \sigma \| = \CO(\eps^{1/2})$ by Lemma~\ref{lem:estdA}. \\
We conclude by analyzing the term involving the stochastic differential $dh$. Recall that by~\eqref{eq:dh} 
$
dh_j \, = \, b_j(h,v) \, dt + \inner{\sigma_j(h,v)}{dW},
$
where $b$ and $\sigma$ are given by~\eqref{eqAC:b} 
and~\eqref{eqAC:sigma}, respectively.
The diffusion term of $\inner{v^3}{u^h_j} \, dh_j$ can be estimated as 
follows:
\begin{equation*}
\begin{split}
\inner{v^3}{u^h_j} \inner{\sigma_j}{dW} \, = \, 
\inner{ \CO(\|u^h_j\|_{L^\infty} \| \sigma_j \| \|v\|_{L^3}^3)}{dW} 
\, = \, \inner{\CO(\eps^{-1/2} \|v\| \|v\|_{L^4}^2)}{dW}.
\end{split}
\end{equation*}
Estimating the drift of $\inner{v^3}{u^h_j} \, dh_j$ is trickier, as we 
have to bound $b$ as well. By virtue of Lemmata~\ref{lem:estdA} and~\ref{lem:ACestODE},
we can bound the drift $b$ up to a stopping time and obtain as long as$\|v(t)\| < \eps^{1/2 + m}$ for some~${m>0}$
\[
\vert b_j \vert \, = \, \CO(\eta_{\eps}) + \CO(\eps) \, \vert \inner{\CN^h(v)}{u^h_j} \vert \, = \, \CO(\eta_{\eps} + \|v\|^2 + \|v\| \|v\|_{L^4}^2).
\]
Hence, this yields
\begin{equation}
\label{est:prefactortimesb}
\begin{split}
| \inner{v^3}{u^h_j} | |b_j| \, &\leq \, c \eps^{-1} \eta_{\eps} \|v\|_{L^3}^3
+ c \eps^{-1/2} \|v\|^2 \|v\|_{L^6}^3
+ c \eps^{-1/2} \|v\| \|v\|_{L^4}^2 \|v\|_{L^6}^3 \\
&\overset{\mathclap{\text{Hölder}}}{\leq} \;\; c \eps^{-1} \eta_{\eps} \|v\|^ {3/2} \|v\|_{L^6}^{3/2}
+ c \eps^{-1/2} \|v\|^2 \|v\|_{L^6}^3 + c\eps^{-1/2} \|v\|^{3/2}
\|v\|_{L^6}^{9/2} \\
&\overset{\mathclap{\text{Young}}}{\leq} \;\; \frac18 \|v\|_{L^6}^6 
+ c \eps^{-4/3} \eta_{\eps}^{4/3} \|v\|^2 + c\eps^{-1} \|v\|^4 
+ c \eps^{-2} \|v\|^6.
\end{split}
\end{equation}
Finally, we estimated every term of $d \|v\|_{L^4}^4$.
Plugging \eqref{est:secondsummand} and \eqref{est:prefactortimesb} into
\eqref{est:v3dv} furnishes
\[
\inner{v^3}{dv} \, \leq \, \left[ - \frac12 \eps^2 \|v^2\|_{H^1}^2 \,  
- \frac14 \|v\|_{L^6}^6 + K_\eps( \| v \|) \right] \, dt 
 {}+{} \inner{\CO(\eps^{-1/2} \|v\| \|v\|_{L^4}^2)}{dW}, 
\]
where $K_\eps$ is given by
\[
K_\eps(\Vert v \Vert) \, = \, c \eps^{-4/5} \|v\|^{18/5} + c\eps^{- 4/3} \|v\|^{14/3} + c \eps^{-4/3} \eta_{\eps}^{4/3} \|v\|^2 + c\eps^{-1} \|v\|^4 
+ c \eps^{-2} \|v\|^6.
\]

Thus far, the terms in $K_\eps$ depend on the $L^2$-norm of~$v$. 
Under the assumptions of Theorem~\ref{thm:ACL2stab}, i.e., a small noise strength $\eta_{\eps}$ and a suitable initial condition $v(0)$, we can bound $\Vert v \Vert_{L^2}$ by an $\eps$-dependent constant for long time scales. In more detail, we obtain for $\Vert v \Vert \leq \eps^{1/2 + m}$
and $\eta_{\eps} \leq \eps^{1+ 2m}$ that
\[ 
K_\eps(\Vert v \Vert) \, \leq \, \eps^{1 + 2m}.
\]

Noticing now that under the same assumptions the bound in Lemma \ref{lem:L4stab:remainder} provides us with an even smaller term and using the basic estimate 
$ - \| v \|_{L^6}^6 \leq \| v \|^2 - \| v \|_ {L^4}^4$,
we proved the following inequality which is essential for deriving
stochastic stability in $L^4$.

\begin{cor}
\label{thm:ACestL4}
As long as $\Vert v \Vert \leq \eps^{1/2 + m}$ and $\eta_{\eps} \leq \eps^{1 + 2m}$ for some $m >0$, we have 
\begin{equation*}
d\|v\|_{L^4}^4 \, \leq \, \Big[ - \|v\|_{L^4}^4 + c \eps^{1 + 2m} \Big] \, dt 
+ \inner{\CO(\eps^{m}  \|v\|_{L^4}^2}{dW}.
\end{equation*}
\end{cor}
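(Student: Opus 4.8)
The plan is to apply the Itô formula to $\|v\|_{L^4}^4$ and then assemble the estimates already derived above. By Itô one has $\tfrac14\,d\|v\|_{L^4}^4 = \inner{v^3}{dv} + 3\int_0^1 v^2(dv)^2\,\mathrm{d}x$. The quadratic-variation term is exactly what Lemma~\ref{lem:L4stab:remainder} controls: it equals $\CO(\eta_\eps)\|v\|^2\,dt$, so under $\eta_\eps\le\eps^{1+2m}$ and $\|v\|\le\eps^{1/2+m}$ it is $\CO(\eps^{2+4m})\,dt$, negligible compared with the target $\eps^{1+2m}$. Hence all the work sits in $\inner{v^3}{dv}$, which we expand using the SPDE for the normal component $v$.

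In that expansion the two terms with a favourable sign are $\eps^2\inner{v^3}{v_{xx}} = -\tfrac34\eps^2\|(v^2)_x\|^2$ (integration by parts) and $-\|v\|_{L^6}^6$ coming from the $-v^3$ term in $\CN^h(v)$. The strategy is to absorb every remaining contribution — the quartic term $\int_0^1(1-3(u^h)^2)v^4$, the quintic term $\int_0^1 3u^h v^5$, the Itô correction $\tfrac12\sum u^h_{ij}\inner{\CQ\sigma_i}{\sigma_j}$ tested against $v^3$, and both drift and diffusion arising from $\inner{v^3}{du^h}$ — into $-\tfrac12\eps^2\|v^2\|_{H^1}^2 - \tfrac14\|v\|_{L^6}^6$ plus a remainder that is polynomial in $\|v\|$ and $\eta_\eps$. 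This is done via Agmon's inequality $\|v^2\|_{L^\infty}\lesssim\|v^2\|_{H^1}^{1/2}\|v^2\|^{1/2}$, Hölder interpolation of the intermediate $L^p$-norms between $L^2$ and $L^6$, and Young's inequality, precisely as in \eqref{est:v3dv}, \eqref{est:secondsummand}, \eqref{est:prefactortimesb}; throughout, the $L^2$-stability of Theorem~\ref{thm:ACL2stab} is what lets us treat $\|v\|$ as a controlled power of $\eps$.

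The one delicate point is $\inner{v^3}{du^h}$, where $du^h$ carries $dh_j = b_j(h,v)\,dt + \inner{\sigma_j(h,v)}{dW}$ from \eqref{eq:dh}: here one must also bound the drift $b$. Lemmata~\ref{lem:estdA} and~\ref{lem:ACestODE} give $|b_j| = \CO(\eta_\eps + \|v\|^2 + \|v\|\,\|v\|_{L^4}^2)$ while $\|v\|<\eps^{1/2+m}$, after which the diffusion part contributes $\inner{\CO(\eps^{-1/2}\|v\|\,\|v\|_{L^4}^2)}{dW}$ and the drift part enters the remainder. Collecting everything yields $K_\eps(\|v\|) = c\eps^{-4/5}\|v\|^{18/5} + c\eps^{-4/3}\|v\|^{14/3} + c\eps^{-4/3}\eta_\eps^{4/3}\|v\|^2 + c\eps^{-1}\|v\|^4 + c\eps^{-2}\|v\|^6$; inserting the hypotheses $\|v\|\le\eps^{1/2+m}$ and $\eta_\eps\le\eps^{1+2m}$ makes every summand $\le\eps^{1+2m}$, and at the same time $\eps^{-1/2}\|v\|\le\eps^m$ so the martingale prefactor is $\CO(\eps^m\|v\|_{L^4}^2)$. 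Dropping the still-negative $-\tfrac12\eps^2\|v^2\|_{H^1}^2$ and using the interpolation $\|v\|_{L^4}^4\le\|v\|\,\|v\|_{L^6}^3\le\tfrac12\|v\|^2 + \tfrac12\|v\|_{L^6}^6$, i.e. $-\|v\|_{L^6}^6\le\|v\|^2 - \|v\|_{L^4}^4$, together with $\|v\|^2\le\eps^{1+2m}$, converts the surviving $-\tfrac14\|v\|_{L^6}^6$ into $-\|v\|_{L^4}^4$ up to $\CO(\eps^{1+2m})$; multiplying through by $4$ gives the claimed inequality.

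The main obstacle is not conceptual but the bookkeeping of the Young splittings: one must arrange that all five exponents of $\|v\|$ in $K_\eps$ land at or above the level where $\eps^{1/2+m}$-smallness buys a factor $\eps^{1+2m}$. The tightest of these is the $\eps^{-2}\|v\|^6$ term produced by the drift $b$, which becomes $\eps^{1+6m}$ and is exactly what forces the $L^2$-radius $\eps^{1/2+m}$; together with the requirement that $\eps^{-4/3}\eta_\eps^{4/3}\|v\|^2\le\eps^{1+2m}$ it also explains the admissible noise threshold $\eta_\eps\le\eps^{1+2m}$ appearing in the hypotheses. The remaining routine but lengthy interpolation computations are those already displayed in the text preceding the statement.
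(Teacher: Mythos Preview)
Your proposal is correct and essentially identical to the paper's own argument: It\^o formula, Lemma~\ref{lem:L4stab:remainder} for the quadratic variation, absorption of the remaining terms in $\inner{v^3}{dv}$ into $-\tfrac12\eps^2\|v^2\|_{H^1}^2 - \tfrac14\|v\|_{L^6}^6$ via the Agmon/H\"older/Young chain of \eqref{est:v3dv}--\eqref{est:prefactortimesb}, collection of the residual into $K_\eps(\|v\|)$, and the final conversion using $-\|v\|_{L^6}^6 \leq \|v\|^2 - \|v\|_{L^4}^4$. Your exponent bookkeeping for $K_\eps$ and identification of the tight terms are accurate and match the paper's reasoning.
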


With this inequality at hand, we can apply the main stability theorem \ref{stab:mainest}.
Bear in mind that in the derivation of Theorem~\ref{thm:ACestL4} we presented only one technique and thus,
we cannot guarantee the optimality of the radii. 
\begin{theorem}[$L^4$-Stability for \eqref{eq:AC}]
\label{thm:multiAC:L4stabAC}
For $m>0$ and small $\kappa > 0$, consider the stopping time
\[
\tau^\ast \, = \, \inf \left\{ t \in [0,T_\eps \land \tau_0] \, : \, 
\|v(t)\| \, > \, \eps^{1/2 + m} \quad \text{ or } \quad
\|v(t)\|_{L^4} \, > \, \eps^{1/4 + m/2 - \kappa} \right\},
\]
where $T_\eps = \eps^{-M}$ for any fixed large $M>0$ and $\tau_0$ denotes
the first exit time from $\Omega_{\rho_\eps}$. \\
Also, assume that for some $\nu  \in (0,1)$
\[
\|v(0)\| \, \leq \,\nu \eps^{1/2+m} \quad \text{and} \quad
\|v(0)\|_{L^4} \, \leq \, \nu \eps^{1/4 + m/2 - \kappa}
\]
and that for the squared noise strength 
\[
\eta_{\eps} \, \leq \, \eps^{1+2m}.
\]
Then, the probability $\prob( \tau^\ast < T_\eps \land \tau_0)$ is smaller than any 
power of $\eps$, as $\eps$ tends to zero.
\end{theorem}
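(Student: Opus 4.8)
The plan is to combine the pointwise differential inequality of Corollary~\ref{thm:ACestL4} with the abstract stability criterion Theorem~\ref{meta:stability}, using the already-established $L^2$-stability (Theorem~\ref{thm:ACL2stab}) to keep the hypothesis $\|v\|\le\eps^{1/2+m}$ that underlies Corollary~\ref{thm:ACestL4} in force. Throughout, $\eta_\eps\le\eps^{1+2m}$ is the global assumption, so the inequality of Corollary~\ref{thm:ACestL4} is available whenever $\|v\|$ is $L^2$-small, and $T\coloneqq T_\eps\wedge\tau_0$ is the fixed (polynomially long) horizon.

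First I would split the exit event. Writing $\tau_{L^2}\coloneqq\inf\{t:\|v(t)\|>\eps^{1/2+m}\}$ and $\tau_{L^4}\coloneqq\inf\{t:\|v(t)\|_{L^4}>\eps^{1/4+m/2-\kappa}\}$, one has $\{\tau^\ast<T\}=\{\tau_{L^2}<T\}\cup\{\tau_{L^4}<T\}$, and since $\{\tau_{L^4}<T\}\subseteq\{\tau_{L^4}<T\wedge\tau_{L^2}\}\cup\{\tau_{L^2}<T\}$,
\[
\prob(\tau^\ast<T)\ \le\ 2\,\prob(\tau_{L^2}<T)\ +\ \prob(\tau_{L^4}<T\wedge\tau_{L^2}).
\]
The term $\prob(\tau_{L^2}<T)$ is smaller than any power of $\eps$ by Theorem~\ref{thm:ACL2stab}, whose hypotheses ($\|v(0)\|\le\nu\eps^{1/2+m}$, $\eta_\eps\le\eps^{1+2m}$) are precisely those assumed here. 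It remains to bound $\prob(\tau_{L^4}<T\wedge\tau_{L^2})$.

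For this I would work on the stochastic interval $[0,T\wedge\tau_{L^2})$, on which $\|v(t)\|\le\eps^{1/2+m}$, so that Corollary~\ref{thm:ACestL4} applies to the process stopped at $\tau_{L^2}$ and yields, for the scalar quantity $Y\coloneqq\|v\|_{L^4}^4$, an It\^o inequality of the form
\[
dY\ \le\ \bigl[-Y+c\,\eps^{1+2m}\bigr]\,dt\ +\ \inner{\CO(\eps^{m}Y^{1/2})}{dW},
\]
which is exactly the structure of~\eqref{stab:mainest} with $a_\eps=1$, $K_\eps=c\,\eps^{1+2m}$, $c_\eps=\eps^{m}$ and $\alpha=\tfrac12$; note that the mean-reversion rate is of order one here, inherited from the strong $L^6$-dissipation, unlike in the $L^2$-case. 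I would then invoke Theorem~\ref{meta:stability} with the exit radius $R_\eps=\eps^{1+2m-4\kappa}$ for $Y$ (equivalent to $\|v\|_{L^4}>\eps^{1/4+m/2-\kappa}$). Since $2\alpha-1=0$, the margin condition reads
\[
\frac{K_\eps+c_\eps^2\eta_\eps R_\eps^{2\alpha-1}}{a_\eps R_\eps}\ =\ \frac{c\,\eps^{1+2m}+\eps^{2m}\eta_\eps}{\eps^{1+2m-4\kappa}}\ \le\ c\,\eps^{4\kappa}\ =\ \CO(\eps^{4\kappa}),
\]
using $\eta_\eps\le\eps^{1+2m}$, and the required smallness of $Y(0)=\|v(0)\|_{L^4}^4$ relative to $R_\eps$ follows from $\|v(0)\|_{L^4}\le\nu\eps^{1/4+m/2-\kappa}$. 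Theorem~\ref{meta:stability} then gives that $\prob(\tau_{L^4}<T\wedge\tau_{L^2})$ is smaller than any power of $\eps$, and combining this with the bound on $\prob(\tau_{L^2}<T)$ completes the proof.

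The main obstacle I anticipate is not a single estimate but the bookkeeping around the two coupled stopping times: Corollary~\ref{thm:ACestL4} is valid only while $\|v\|$ is $L^2$-small, so the $L^4$-argument must be run on the process stopped at $\tau_{L^2}$ and then $\tau_{L^2}$ removed via Theorem~\ref{thm:ACL2stab}, with all ``small probability'' bounds genuinely uniform over the polynomial horizon $T_\eps=\eps^{-M}$. A secondary technical point is to double-check that the It\^o-correction contributions collected in $d\|v\|_{L^4}^4$ — in particular the one bounded in Lemma~\ref{lem:L4stab:remainder} and the noise-coefficient term — are of order $\eta_\eps\|v\|^2$ or smaller, hence genuinely absorbed into $K_\eps$ and into the diffusion coefficient $\CO(\eps^{m}\|v\|_{L^4}^2)$, so that the reduction to the form~\eqref{stab:mainest} is legitimate.
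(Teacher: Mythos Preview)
Your proposal is correct and follows essentially the same approach as the paper: set $Y=\|v\|_{L^4}^4$, feed the differential inequality of Corollary~\ref{thm:ACestL4} into Theorem~\ref{meta:stability} with $a_\eps=1$, $K_\eps=\CO(\eps^{1+2m})$, $c_\eps=\CO(\eps^m)$, $\alpha=\tfrac12$, $R_\eps=\eps^{1+2m-4\kappa}$, verify the margin condition is $\CO(\eps^{4\kappa})$, and handle the $L^2$-exit via Theorem~\ref{thm:ACL2stab}. Your stopping-time decomposition is slightly more explicit than the paper's (which simply works up to $\tau^\ast$ and splits the exit probability at the end), but the substance is identical.
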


\begin{proof}

To fit in the setting of our general stability result of Theorem~\ref{stab:mainest}, we set $x(t) = \Vert v(t) \Vert_{L^4}^4$.
Utilizing the estimate of Corollary~\ref{thm:ACestL4} then yields
\[
dx(t) \, \leq \, \left[ K_\eps( \eta_{\eps}) - a_\eps x(t) \right] \, dt + \langle \CO( c_\eps x(t)^\alpha), \, dW \rangle,
\]
where the constants are given by $a_\eps = 1$, $K_\eps(\eta_{\eps}) = \CO(\eps^{2m+1})$, $c_\eps = \CO(\eps^ m )$, and $\alpha = \tfrac12$. 
Note that due to the substitution and the definition of the stopping time $\tau^\ast$ the radius $R_\eps$ for the variable $x$ is now $\eps^{1+2m -  4 \kappa }$. With that, one easily computes
\[
\frac{K_\eps(\eta_{\eps}) + c_\eps^2 \eta_{\eps}}{a_\eps R_\eps} \, = \, \CO(\eps^{4 \kappa}).
\]
By applying Theorem~\ref{meta:stability}, this shows that $\prob ( \Vert v( \tau^\ast) \Vert_{L^4} > \eps^{1/4 + m/2 - \kappa})$ is smaller than any power of~$\eps$. 
By  the $L^2$-result of Theorem~\ref{thm:ACL2stab} and the basic inequality
\[
\prob \left( \tau_\eps \, < \, T_\eps \land \tau_0 \right) 
\, \leq \, \prob \left( \|v(\tau_\eps)\| \, > \, \eps^{1/2 + m} \right) + \prob \left( \Vert v( \tau^\ast) \Vert_{L^4} > \eps^{1/4 + m/2 - \kappa} \right),
\]
the proof is complete.
\end{proof}


\subsection{\texorpdfstring{$L^4$-Stability for \eqref{eq:mAC}}{L4-Stability for (mAC) }}
\label{subsec:L4mAC}

We conclude our analysis of the stochastic Allen--Cahn equation with 
stating the corresponding $L^4$-stability result for
the mass conserving case \eqref{eq:mAC}. The proof is a straightforward
adaption of the results presented in Section \ref{subsec:L4AC} and is stated in full detail in \cite{SchindlerPhD}.

\begin{theorem}[$L^4$-Stability for \eqref{eq:mAC}]
\label{thm:multiAC:L4mac}
For $m>0$ and small $\kappa > 0$, consider the stopping time
\[
\tau^\ast \, = \, \inf \left\{ t \in [0,T_\eps \land \tau_0] \, : \, 
\|v(t)\| \, > \, \eps^{3/2 + m} \; \text{ or } \;
\|v(t)\|_{L^4} \, > \, \eps^{3/4 + m/2 - \kappa} \right\},
\]
where $T_\eps = \eps^{-M}$ for fixed large $M>0$ and $\tau_0$ denotes 
the first exit time from the set of admissible positions $\CA_{\rho_\eps}$.
Also, assume that for some $\nu \in (0,1)$
\[
\|v(0)\| \, \leq \, \nu \eps^{3/2+m} \quad \text{and} \quad
\|v(0)\|_{L^4} \, \leq \, \nu \eps^{3/4 + m/2 - \kappa}
\]
and that the squared noise strength satisfies
$
\eta_{\eps} \, \leq \, \eps^{4+2m}.
$
\\
Then, the probability $\prob( \tau^\ast < T_\eps \land \tau_0)$ is smaller than any 
power of $\eps$, as $\eps$ tends to zero.
\end{theorem}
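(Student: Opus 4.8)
The plan is to repeat the $L^4$-computation of Section~\ref{subsec:L4AC} essentially verbatim, with $u^h$ replaced by $u^\xi$, $\CL^h$ by $\CL^\xi$, $\CN^h$ by $\CN^\xi$, and the diagonal matrix $A(h,v)$ of Lemma~\ref{lem:matrixAC} replaced by the mass-conserving metric tensor $S$ and perturbed matrix $A(\xi,v)$ of Lemmata~\ref{lem:maininverse}, \ref{lem:invertMC} and~\ref{lem:multiAC:inversemAC}. Since the dynamics~\eqref{eq:mAC} is $\partial_t u = P\CL(u) + \partial_t W$, one works with the $L^2_0$-projected flow for the normal component,
\[
dv \, = \, P\big[ \CL(u^\xi) + \CL^\xi v + \CN^\xi(v) \big] \, dt + dW - \sum_j u^\xi_j \, d\xi_j - \tfrac12 \sum_{i,j} u^\xi_{ij} \inner{\CQ \sigma_i}{\sigma_j} \, dt , \qquad \sigma_r = \sum_i A_{ri}^{-1} u^\xi_i ,
\]
which preserves $v \in L^2_0$ because $\int_0^1 u^\xi_j \, \mathrm{d}x = \partial_{\xi_j}\mu = 0$ and $\int_0^1 u^\xi_{ij}\,\mathrm{d}x = 0$. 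By Proposition~\ref{prop:uh} and the chain rule~\eqref{eq:multiAC:chainrule} one still has $\|u^\xi_i\| = \CO(\eps^{-1/2})$, $\|u^\xi_{ij}\| = \CO(\eps^{-3/2})$, and, since $S^{-1} = \CO(\eps)$, also $\|\sigma_r\| = \CO(\eps^{1/2})$; so every size input to the $L^4$-estimate is of the same order as in the non-mass-conserving case.

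Concretely, I would apply It\^o to get $\tfrac14 d\|v\|_{L^4}^4 = \inner{v^3}{dv} + 3\int_0^1 v^2 (dv)^2 \, \mathrm{d}x$, bound the correction term as in Lemma~\ref{lem:L4stab:remainder} (giving $\CO(\eta_\eps)\|v\|^2\, dt$, the non-diagonality of $\sigma$ being irrelevant to the orders), and then expand $\inner{v^3}{dv}$. The two good negative terms are once more $-\tfrac34\eps^2\|(v^2)_x\|^2$, from integrating the Laplacian by parts, and $-\|v\|_{L^6}^6$, from the cubic nonlinearity. Every remaining term — the quartic $\int_0^1(1-3(u^\xi)^2)v^4$, the quintic $\int_0^1 3u^\xi v^5$, the two pieces of $\inner{v^3}{du^\xi}$, and the corrections $-\big(\int_0^1 g\,\mathrm{d}x\big)\big(\int_0^1 v^3\,\mathrm{d}x\big)$ produced by moving $P$ off $v^3 \notin L^2_0$ with $g \in \{\CL(u^\xi),\CL^\xi v,\CN^\xi(v)\}$ (for which $\int_0^1\CL^\xi v\,\mathrm{d}x = \CO(\eps^{1/2}\|v\|)$, $\int_0^1\CN^\xi(v)\,\mathrm{d}x = \CO(\|v\|^2 + \|v\|_{L^3}^3)$, $\int_0^1\CL(u^\xi)\,\mathrm{d}x = \CO(\exp)$) — is absorbed into the two good terms by the same Agmon/Gagliardo--Nirenberg/H\"older/Young chain as in Section~\ref{subsec:L4AC}. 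The only place where the mass-conserving structure genuinely enters is the drift of $\inner{v^3}{u^\xi_j}\,d\xi_j$: there I would invoke Lemma~\ref{lem:estODE} (in place of Lemma~\ref{lem:ACestODE}) together with $A^{-1} = S^{-1} + \CO(\eps^{m+1})$ from Lemma~\ref{lem:multiAC:inversemAC} to obtain $|b_j| = \CO(\eta_\eps + \|v\|^2 + \|v\|\|v\|_{L^4}^2)$, checking that the off-diagonal but $\CO(\eps)$ entries of $S^{-1}$ cost no extra negative power of $\eps$. The outcome is the analogue of Corollary~\ref{thm:ACestL4},
\[
d\|v\|_{L^4}^4 \, \leq \, \big[ - \|v\|_{L^4}^4 + K_\eps(\|v\|) \big] \, dt + \inner{\CO(\eps^m \|v\|_{L^4}^2)}{dW} ,
\]
with $K_\eps(\|v\|)$ a finite sum of monomials $\eps^{-p_i}\eta_\eps^{q_i}\|v\|^{r_i}$, $r_i>0$.

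The smaller stability radii $\eps^{3/2+m}$, $\eps^{3/4+m/2-\kappa}$ and the reduced admissible noise strength $\eta_\eps \le \eps^{4+2m}$ — all of which ultimately originate from the order-$\eps$ spectral gap of Theorem~\ref{thm:gapMAC}, fed through the $L^2$-stability Theorem~\ref{thm:L2stabmac} — are chosen precisely so that, on the event $\{\|v\|\le\eps^{3/2+m}\}$ (which holds with overwhelming probability by Theorem~\ref{thm:L2stabmac}), each monomial of $K_\eps$ is small enough for the hypotheses of Theorem~\ref{meta:stability} to be met with $x = \|v\|_{L^4}^4$, $a_\eps = 1$, $c_\eps = \CO(\eps^m)$, $\alpha = \tfrac12$ and $R_\eps = \eps^{3+2m-4\kappa}$; I would verify this termwise, exactly as in the passage following Corollary~\ref{thm:ACestL4}. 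Theorem~\ref{meta:stability} then gives that $\prob(\|v(\tau^\ast)\|_{L^4} > \eps^{3/4+m/2-\kappa})$ is smaller than any power of $\eps$, and combining with Theorem~\ref{thm:L2stabmac} via the union bound
\[
\prob(\tau^\ast < T_\eps \wedge \tau_0) \, \leq \, \prob(\|v(\tau^\ast)\| > \eps^{3/2+m}) + \prob(\|v(\tau^\ast)\|_{L^4} > \eps^{3/4+m/2-\kappa}) ,
\]
exactly as in the proof of Theorem~\ref{thm:multiAC:L4stabAC}, finishes the proof. The expected main difficulty is not conceptual — the whole argument lives inside the interpolation machinery of Section~\ref{subsec:L4AC}, and the weak spectral gap has already been packaged into Theorem~\ref{thm:L2stabmac} — but purely a matter of bookkeeping: re-deriving every interpolation exponent with $\eps^{3/2+m}$, $\eps^{3/4+m/2-\kappa}$ in place of $\eps^{1/2+m}$, $\eps^{1/4+m/2-\kappa}$, tracking the non-diagonal $S^{-1}$ through the drift and diffusion coefficients without losing powers of $\eps$, and disposing of the extra $L^2_0$-projection corrections.
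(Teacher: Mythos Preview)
Your proposal is correct and follows exactly the route the paper indicates: the paper does not give a detailed proof of this theorem but simply states that it is a straightforward adaptation of the $L^4$-computation in Section~\ref{subsec:L4AC}, referring to \cite{SchindlerPhD} for full details. In fact you supply considerably more of the bookkeeping than the paper does---the treatment of the $L^2_0$-projection correction terms, the verification that the non-diagonal $S^{-1}$ of Lemma~\ref{lem:invertMC} preserves all $\eps$-orders in $\sigma$ and $b$, and the substitution of Lemma~\ref{lem:estODE} for Lemma~\ref{lem:ACestODE}---so there is nothing to correct.
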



\begin{thebibliography}{10}

\bibitem{AF98}
{\sc N.~Alikakos and G.~Fusco}, {\em {Slow dynamics for the Cahn--Hilliard
  equation in higher space dimensions: the motion of bubbles}}, Arch. Ration.
  Mech. Anal., 141 (1998), pp.~1--61.

\bibitem{AFK04}
{\sc N.~Alikakos, G.~Fusco, and G.~Karali}, {\em {Motion of bubbles towards the
  boundary for the Cahn--Hilliard equation}}, European J. Appl. Math., 15
  (2004), pp.~103--124.

\bibitem{AlFuSt96}
{\sc N.~Alikakos, G.~Fusco, and V.~Stefanopoulos}, {\em {Critical spectrum and
  stability of interfaces for a class of reaction-diffusion equations}}, J.
  Differential Equations, 126 (1996), pp.~106--167.

\bibitem{ABC94}
{\sc N.~D. Alikakos, P.~Bates, and X.~Chen}, {\em {Convergence of the
  Cahn--Hilliard equation to the Hele--Shaw model}}, Arch. Ration. Mech. Anal.,
  128 (1994), pp.~165--205.

\bibitem{ACF00}
{\sc N.~D. Alikakos, X.~Chen, and G.~Fusco}, {\em {Motion of a droplet by
  surface tension along the boundary}}, Calc. Var., 11 (2000), pp.~233--305.

\bibitem{ABBK15}
{\sc D.~Antonopoulou, P.~Bates, D.~Bl{\"o}mker, and G.~Karali}, {\em {Motion of
  a droplet for the mass-conserving stochastic Allen--Cahn equation}}, SIAM J.
  Math. Anal.,  (2015).

\bibitem{ABK12}
{\sc D.~Antonopoulou, D.~Bl{\"o}mker, and G.~Karali}, {\em {Front-motion in the
  one-dimensional stochastic Cahn--Hilliard equation}}, SIAM J. Math. Anal.,
  44(5) (2012), pp.~3242--3280.

\bibitem{ABK18}
{\sc D.~Antonopoulou, D.~Bl{\"o}mker, and G.~Karali} , {\em {The sharp interface
  limit for the stochastic Cahn--Hilliard Equation}}, Ann. Inst. Henri
  Poincar{\'e} Probab. Stat., 54 (2018), pp.~280--298.

\bibitem{BYZ19}
{\sc L.~Banas, H.~Yang, and R.~Zhu}, {\em {Sharp interface limit of stochastic
  Cahn--Hilliard equation with singular noise}}, arXiv:1905.07216,  (2019).

\bibitem{BJ14}
{\sc P.~Bates and J.~Jin}, {\em {Global Dynamics of Boundary Droplets}},
  Discrete Contin. Dyn. Syst., 34 (2014), pp.~1--17.

\bibitem{BaXun94}
{\sc P.~Bates and J.~Xun}, {\em {Metastable patterns for the Cahn--Hilliard
  equation, Part I}}, J. Differential Equations, 111 (1994), pp.~421--457.

\bibitem{BaXun95}
{\sc P.~Bates and J.~Xun},  {\em {Metastable patterns
  for the Cahn--Hilliard equation: Part II. Layer dynamics and slow invariant
  Manifold}}, J. Differential Equations, 117 (1995), pp.~165--216.

\bibitem{BS20}
{\sc D.~Bl{\"o}mker and A.~Schindler}, {\em {Stochastic Cahn--Hilliard equation
  in higher space dimensions: the motion of bubbles}}, Z. Angew. Math. Phys, 71
  (2020).

\bibitem{CGS84}
{\sc J.~Carr, M.~Gurtin, and M.~Slemrod}, {\em {Structural phase transitions on
  a finite interval}}, Arch. Ration. Mech. Anal., 86 (1984), pp.~317 -- 351.

\bibitem{CP89}
{\sc J.~Carr and R.~Pego}, {\em {Metastable patterns in solutions to {$u_t =
  \varepsilon^2 u_{xx} - f(u)$}}}, Comm. Pure Appl. Math., 42 (1989),
  pp.~523--576.

\bibitem{CHEN04}
{\sc X.~Chen}, {\em {Generation, propagation, and annihilation of metastable
  patterns}}, J. Differential Equations,  (2004), pp.~399--437.

\bibitem{DPZ14}
{\sc G.~{Da Prato} and J.~{Zabczyk}}, {\em {Stochastic equations in infinite
  dimensions. 2nd ed}}, vol.~152, Cambridge: Cambridge University Press, 2nd
  ed.~ed., 2014.

\bibitem{HS96}
{\sc P.~Hislop and I.~Sigal}, {\em {Introduction to spectral theory: With
  applications to Schr{\"o}dinger operators}}, no.~1 in Applied Mathematical
  Sciences, Springer-Verlag New York, 1996.

\bibitem{OR07}
{\sc F.~Otto and M.~G. Reznikoff}, {\em {Slow motion of gradient flows}}, J.
  Differential Equations, 237 (2007), pp.~372--420.

\bibitem{SchindlerPhD}
{\sc A.~Schindler}, {\em {Interface motion for the stochastic Allen–-Cahn and
  Cahn–-Hilliard equation}}, PhD thesis, Universität Augsburg, 2021.

\bibitem{Shardlow00}
{\sc T.~Shardlow}, {\em {Stochastic perturbations of the Allen--Cahn
  equation}}, Electron. J. Differential Equations,  (2000), pp.~1--19.

\bibitem{Walter98}
{\sc W.~Walter}, {\em {Ordinary differential equations}}, vol.~182 of Graduate
  Texts in Mathematics, Springer-Verlag New York, 1~ed., 1998.

\bibitem{Web14}
{\sc S.~Weber}, {\em {The sharp interface limit of the stochastic Allen--Cahn
  equation}}, PhD thesis, University of Warwick, 2014.

\bibitem{YZ19}
{\sc H.~Yang and R.~Zhu}, {\em {Weak solutions to the sharp interface limit of
  stochastic Cahn--Hilliard equations}}, arXiv:1905.09182,  (2019).

\end{thebibliography}
\end{document}